\newcommand{\ba}{\begin{eqnarray}}
\newcommand{\ea}{\end{eqnarray}}
\newtheorem{thm}{Theorem}[section]
\newtheorem{conjecture}{Conjecture}
\newtheorem{theorem}[thm]{Theorem}
\newtheorem{lemma}[thm]{Lemma}
\newtheorem{corollary}[thm]{Corollary}
\newcommand*{\rom}[1]{\expandafter\@slowromancap\romannumeral #1@}
\begin{document}
\title{\textbf{About the Erd\"{o}s-Hajnal conjecture for seven-vertex tournaments}}
\maketitle


\begin{center}
\author{Soukaina ZAYAT \footnote{Department of Mathematics, Lebanese University, Hadath, Lebanon. (soukaina.zayat.96@outlook.com)}, Salman GHAZAL \footnote{Department of Mathematics, Lebanese University, Hadath, Lebanon. (salman.ghazal@ul.edu.lb)}}
\end{center}

\begin{abstract}
 A celebrated unresolved conjecture of Erd\"{o}s and Hajnal states that for every undirected graph $H$ there exists $ \epsilon(H) > 0 $ such that every undirected graph on $ n $ vertices that does not contain $H$ as an induced subgraph contains a clique or a stable set of size at least $ n^{\epsilon(H)} $. The conjecture has a directed equivalent version stating that for every tournament $H$ there exists $ \epsilon(H) > 0 $ such that every $H-$free $n-$vertex tournament $T$ contains a transitive subtournament of order at least $ n^{\epsilon(H)} $. Both the directed and the undirected versions of the conjecture are known to be true for small graphs (tournaments). So far the conjecture was proved only for some specific families of prime tournaments, tournaments constructed according to the so$-$called substitution procedure allowing to build bigger graphs, and for all five$-$vertex tournaments. Recently the conjecture was proved for all six$-$vertex tournament, with one exception, but the question about the correctness of the conjecture for all seven$-$vertex tournaments remained open. In this paper we prove the correctness of the conjecture for several seven$-$vertex tournaments.
\end{abstract}

\section{Introduction}
Let $ G $ be an undirected graph. We denote by $ V(G) $ the set of its vertices and by $ E(G) $ the set of its edges. We call $ \mid$$G$$\mid :=$ $ \mid$$V(G)$$\mid$ the \textit{size} of $G$. Let $X \subseteq V(G)$. The \textit{subgraph of} $G$ \textit{induced by} $X$ is denoted by $G$$\mid$$X$. A \textit{clique} in $G$ is a set of pairwise adjacent vertices and a \textit{stable set} in $G$ is a set of pairwise nonadjacent vertices. Let $D$ and $D'$ be two digraphs. We say that $D'$ is a \textit{subdigraph} of $D$ if $V(D') \subseteq V(D)$ and $E(D') \subseteq E(D)$. We say that $D$ \textit{contains} $D'$ if $D'$ is isomorphic to a subdigraph of $D$. A \textit{tournament} is a directed graph such that for every pair $u$ and $v$ of vertices, exactly one of the arcs $(u,v)$ or $(v,u)$ exists. If two tournaments $T$ and $H$ are isomorphic, we write $T$$\approx$$H$, else we write $T$$\ncong$$H$. Two tournaments $H$ and $T$ are \textit{symmetric} if $H^{c}$$\approx$$T$. A tournament is \textit{transitive} if it contains no directed cycle. A \textit{cyclic triangle} is a directed cycle of length $3$. For a tournament $H$ and a vertex $v \notin V(H)$, we denote by $H + v$ a tournament obtained from $H$ with $V(H)\cup\lbrace v \rbrace$ as the vertex set. We denote by $H^{c}$ the tournament obtained from $H$ by reversing directions of all arcs of $H$. If $(u,v)\in E(H)$, then we say that $u$ is \textit{adjacent to} $v$ (alternatively: $v$ is an out-neighbor of $u$), and we write $u\rightarrow v$. In this case, we also say that $v$ is \textit{adjacent from} $u$ (alternatively: $u$ is an in-neighbor of $v$), and we write $v\leftarrow u$. The \textit{out-neighborhood} $N^{+}(x)$ of a vertex $x$ in a tournament $H$ is the set of all out-neighbors of $x$. The \textit{in-neighborhood} $N^{-}(x)$ of a vertex $x$ in a tournament $H$ is the set of all in-neighbors of $x$. The \textit{out-degree of $x$ in} $H$, denoted by $d^{+}(x)$, is  $ \mid$$N^{+}(x)$$\mid$, and the \textit{in-degree of $x$ in} $H$, denoted by $d^{-}(x)$, is  $ \mid$$N^{-}(x)$$\mid$. We say that $H$ is \textit{regular} if all its vertices have the same out-degree. Let $X \subseteq V(H)$. The \textit{subtournament of} $H$ \textit{induced by} $X$ is denoted by $H$$\mid$$X$. Denote by $N^{+}_{X}(x)$ the set of all out-neighbors of $x$ in $X$, and  by $N^{-}_{X}(x)$ the set of all in-neighbors of $x$ in $X$. Let $d^{+}_{X}(x) := \mid$$N^{+}_{X}(x)$$\mid$ and let $d^{-}_{X}(x) := \mid$$N^{-}_{X}(x)$$\mid$. Let $S$ be a tournament. We say that $H$ \textit{contains} $S$ if $S$ is isomorphic to $H$$\mid$$X$ for some $X \subseteq V(H)$. If $H$ does not contain $S$, we say that $H$ is $S$$-$$free$.  For two sets of vertices $V_{1},V_{2}$ of $H$, we say that $V_{1}$ is \textit{complete to} $V_{2}$ (equivalently $V_{2}$ is \textit{complete from} $V_{1}$) if every vertex of $V_{1}$ is adjacent to every vertex of $V_{2}$. We say that a vertex $v$ is \textit{complete to} (resp. \textit{from}) a set $V$ if $\lbrace v \rbrace$ is complete to (resp. from) $V$, and we write $v \rightarrow V$ (resp. $v \leftarrow V$). If $H$ does not contain the tournaments $S_{1},...,S_{k}$ as subtournaments, we say that $H$ is $(S_{1},...,S_{k})-free$. 

Erd\"{o}s and Hajnal proposed the following conjecture \cite{jhp} (EHC):
\begin{conjecture} For any undirected graph $H$ there exists $ \epsilon(H) > 0 $ such that any $ H- $free undirected graph with $n$ vertices contains a clique or a stable set of size at least $ n^{\epsilon(H)}. $
\end{conjecture}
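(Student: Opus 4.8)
The plan is to treat the statement as the full Erd\"os--Hajnal conjecture and to attack it through the structural \emph{substitution} mechanism, reducing an arbitrary graph $H$ to its prime (substitution-indecomposable) building blocks. First I would move freely between the undirected formulation displayed above and the tournament formulation described in the abstract: by the equivalence of Alon, Pach and Solymosi \cite{fdo}, proving that every tournament $H$ forces a transitive subtournament of polynomial order in every $H$-free tournament is equivalent to the conjecture, and the tournament setting is the one in which the structural tools here operate. So I would fix $H$, assume inductively that every graph on fewer vertices already has the property, and aim to produce $\epsilon(H)>0$.

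The induction would proceed by modular decomposition of $H$. If $H$ is not prime, write it as a substitution of smaller tournaments $H_1,\dots,H_k$ into the vertices of a quotient $H_0$; the substitution lemma of \cite{fdo} then assembles $\epsilon(H)$ from the exponents of the parts, closing the non-prime case at once. The whole difficulty is therefore concentrated in the prime case. Here the plan is to show that an $H$-free $n$-vertex tournament admits a bounded-depth ordered partition whose parts are pairwise homogeneous, each part complete to or complete from each later part up to a controlled error, so that some part is forced to contain a transitive set of order $n^{\epsilon(H)}$; concretely one would extract either a single linear-sized transitive piece or a pair of large vertex sets with a rigid cross-adjacency pattern (a coherent pair) that can be iterated to produce the transitive subtournament.

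The hard part --- and it is the open heart of the conjecture --- is exactly this prime case. When $H$ is prime there is no nontrivial module to split off, so the substitution lemma gives no leverage, and the only bound available in full generality is the original Erd\"os--Hajnal estimate, which yields a transitive set of size merely $\exp(c\sqrt{\log n})$ rather than any fixed power of $n$; later work has narrowed this to $\exp((\log n)^{1-o(1)})$, but still falls short of the polynomial target. I do not expect a single argument to close this gap for \emph{every} prime $H$ simultaneously, and this is why the conjecture remains unresolved. The realistic program, and the one this paper carries out, is to establish the polynomial bound one prime tournament at a time: for each target seven-vertex prime $H$ one analyzes the structure an $H$-free tournament is forced into, shows either that $H$ can be built by substitution from smaller tournaments already known to have the property or that $H$ itself admits an explicit ordered partition of the above type, and thereby supplies $\epsilon(H)$. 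Combining these cases with the substitution reduction enlarges the family of graphs for which the conjecture is confirmed --- here to several seven-vertex tournaments --- while a uniform treatment of arbitrary prime $H$ stays the principal obstacle.
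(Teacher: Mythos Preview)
The statement you were asked to prove is Conjecture~1, the full Erd\H{o}s--Hajnal conjecture. The paper does \emph{not} prove it; it is stated there precisely as an open conjecture, and the paper's contribution is to verify its tournament version for a specific finite list of seven-vertex tournaments. So there is no ``paper's own proof'' to compare against.

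Your proposal is not a proof either, and you say as much: you correctly reduce to prime tournaments via the substitution lemma of Alon--Pach--Solymosi, correctly identify the prime case as the open obstacle, and correctly note that only subpolynomial bounds are known in general. The sketch of ``extract an ordered partition with controlled cross-densities and iterate'' is, at that level of vagueness, compatible with the smooth-structure machinery the paper uses for its individual cases, but it is not an argument that works for an arbitrary prime $H$; indeed, producing such a partition that forces a polynomial-size transitive set for \emph{every} prime $H$ is exactly the unresolved content of the conjecture. In short: your assessment of the landscape is accurate, but what you have written is a program description, not a proof, and no proof of this statement currently exists.
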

In 2001 Alon et al. proved \cite{fdo} that Conjecture $1$ has an equivalent directed version, where undirected graphs are replaced by tournaments and cliques and stable sets by transitive subtournaments, as follows:
\begin{conjecture} \label{a} For any tournament $H$ there exists $ \epsilon(H) > 0 $ such that every $ H- $free tournament with $n$ vertices contains a transitive subtournament of size at least $ n^{\epsilon(H)}. $
\end{conjecture}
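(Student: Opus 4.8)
The plan is to prove that Conjecture 1 and Conjecture 2 are \emph{equivalent}, by building a dictionary between undirected graphs and tournaments and transporting the homogeneity statements across it. The dictionary is the following order-dependent bijection: fix a finite set $V$ with a linear order $<$, and to a tournament $T$ on $V$ assign the undirected graph $\Phi(T)$ in which, for $u<v$, the pair $\{u,v\}$ is an edge exactly when $v\rightarrow u$ in $T$ (the arc runs \emph{backward} with respect to $<$); the inverse map sends edges to backward arcs and non-edges to forward arcs. Under $\Phi$ a clique of $\Phi(T)$ is a set all of whose arcs run backward, hence a transitive subtournament of $T$ ordered by the reverse of $<$, while a stable set is a set all of whose arcs run forward, hence a transitive subtournament ordered by $<$. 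Thus any clique or stable set of $\Phi(T)$ of size $m$ yields a transitive subtournament of $T$ of size $m$, and conversely any transitive subtournament of $T$ whose topological order is monotone in $<$ yields a clique or stable set of $\Phi(T)$ of the same size.

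Both implications then follow the same template, modulo the difficulty discussed below. For Conjecture 2 $\Rightarrow$ Conjecture 1: given an $H$-free graph $G$ on $n$ vertices, fix an order on $V(G)$, pass to a tournament built from $G=\Phi(T)$, and apply Conjecture 2 to obtain a transitive subtournament of size at least $n^{\delta}$. Its topological order, read against $<$, is a permutation to which the Erd\"{o}s--Szekeres theorem applies, producing a $<$-monotone transitive subtournament of size at least $n^{\delta/2}$, i.e. a clique or stable set of $G$ of size at least $n^{\delta/2}$; this gives $\epsilon(H)=\delta/2$. The implication Conjecture 1 $\Rightarrow$ Conjecture 2 is the mirror image: from an $S$-free tournament $T$ pass to $G=\Phi(T)$, extract a large clique or stable set from Conjecture 1, and read it back as a transitive subtournament. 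Here no Erd\"{o}s--Szekeres step is needed, since a clique or a stable set is already $<$-monotone.

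The single genuine difficulty, which dominates both implications, is that $\Phi$ depends on the chosen order while the hypotheses ``$H$-free'' and ``$S$-free'' are order-blind. Forbidding one tournament $S$ in $T$ does \emph{not} force $\Phi(T)$ to be $F$-free for any single undirected $F$: an unordered copy of the backward-arc graph of $S$ inside $\Phi(T)$ may arise from an isomorphism that fails to respect $<$, and such a map carries some arcs to their reverses, so it corresponds to a tournament other than $S$. The symmetric obstruction blocks the reverse translation, so neither implication is clean as stated. Eliminating it is the heart of the matter, and it is exactly here that the substitution procedure of Alon et al. (emphasized in the abstract) enters: one substitutes into each vertex a rigid gadget --- a transitive tournament whose unique topological order pins down the local orientation --- so that after the blow-up every embedding of the forbidden pattern is forced to be order-preserving, converting order-dependent avoidance into honest order-free avoidance of a \emph{finite family} of graphs, respectively tournaments.

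Two routine reductions then close the argument. First, avoidance of a finite family is no weaker than avoidance of one member, since the family-free class is contained in each single-object-free class, so the polynomial bound granted by Conjecture 1 or Conjecture 2 for a single forbidden object already applies. Second, a substitution-type estimate transfers the polynomial bound from the blown-up object back to $T$ (respectively $G$), losing only a constant factor in the exponent, so the $\epsilon$'s survive the reduction. I expect the design and analysis of the order-forcing gadgets --- checking that they keep the forbidden family finite and that they create no unintended transitive or homogeneous sets --- to be the main obstacle; the correspondence, the Erd\"{o}s--Szekeres extraction, and the finite-family reduction are comparatively mechanical.
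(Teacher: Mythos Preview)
The statement you are addressing is a \emph{conjecture}, not a theorem; the paper does not prove it and could not, since it is open. What the paper does is quote the result of Alon, Pach, and Solymosi that Conjecture~1 and Conjecture~2 are equivalent, and then spend the rest of its effort verifying Conjecture~2 for particular seven-vertex tournaments. There is therefore no ``paper's own proof'' of this statement to compare against.

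What you have actually sketched is a proof of the \emph{equivalence} of the two conjectures, which is a different (and true) assertion. Your dictionary $\Phi$ and the Erd\H{o}s--Szekeres extraction are the right ingredients, and you have correctly located the real obstacle: forbidding a single tournament $S$ does not translate into forbidding a single graph $H$ under $\Phi$, because non-order-preserving embeddings scramble the correspondence. Your proposed fix --- blow up vertices by transitive gadgets to rigidify orderings --- is plausible in spirit but underspecified; in particular you have not said which tournament to forbid when starting from an $H$-free graph (or vice versa), nor shown that the resulting forbidden family is finite and that the blow-up does not destroy the freeness hypothesis. The Alon--Pach--Solymosi argument handles this somewhat differently: rather than forcing order-preservation by gadgets, it works with the full finite family of tournaments obtained from $H$ under all orderings of $V(H)$ (and symmetrically for the other direction), and uses their substitution closure result to reduce the finite-family statement to a single forbidden object. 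If you want a complete proof of the equivalence, that is the step to fill in.
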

A tournament $H$ \textit{satisfies the Erd\"{o}s-Hajnal Conjecture (EHC)} if there exists $ \epsilon(H) > 0 $ such that every $ H- $free tournament $T$ with $n$ vertices contains a transitive subtournament of size at least $ n^{\epsilon(H)}. $ \\
A class of tournaments $\mathcal{F}$ \textit{satisfy the Erd\"{o}s-Hajnal Conjecture (EHC)} (equivalently: $\mathcal{F}$ has the \textit{Erd\"{o}s-Hajnal property}) if there exists $ \epsilon(\mathcal{F}) > 0 $ such that every $\mathcal{F}$$- $free tournament $T$ with $n$ vertices contains a transitive subtournament of size at least $ n^{\epsilon(\mathcal{F})}. $ If $\lbrace H\rbrace$ satisfy $EHC$ we simply say that $H$ \textit{satisfies EHC}.

A set of vertices $S\subseteq V(H)$ of a tournament $H$ is called \textit{homogeneous} if for every $v\in V(H)\backslash S$, $\lbrace v \rbrace$ is complete to/from $S$. A homogeneous set $S$ is called \textit{non-trivial} if $\mid$$S$$\mid > 1$ and $S\neq V(H)$. A tournament is called \textit{prime} if it does not have non-trivial homogeneous sets.
The following theorem which is applied to tournaments and a corollary of the results in \cite{fdo} shows the importance of prime tournaments.
\begin{theorem} \label{o}
If Conjecture \ref{a} is false then the smallest counterexample is prime.
\end{theorem}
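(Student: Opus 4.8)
The plan is to prove the contrapositive: assuming Conjecture~\ref{a} is false, I take a counterexample tournament $H$ of minimum size and show it must be prime. So let $H$ be a tournament that does not satisfy EHC, with $\mid H\mid$ as small as possible among all such tournaments; in particular every tournament $S$ with $\mid S\mid < \mid H\mid$ does satisfy EHC. Suppose for contradiction that $H$ is not prime, so it has a nontrivial homogeneous set $S$ with $1 < \mid S\mid < \mid H\mid$. Contract $S$ to a single vertex to obtain the quotient tournament $H' = (H\mid(V(H)\setminus S))+v$, where $v$ represents $S$ and inherits the complete-to/complete-from relations that every outside vertex has with $S$; this is well defined precisely because $S$ is homogeneous. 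Both $H\mid S$ and $H'$ are strictly smaller than $H$, so each satisfies EHC with some exponent; let $\epsilon_1$ work for $H\mid S$ and $\epsilon_2$ for $H'$, and set $\epsilon := \epsilon_1\epsilon_2 > 0$. I claim this $\epsilon$ witnesses EHC for $H$, contradicting the choice of $H$.

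The key step is the following substitution-type argument. Let $T$ be an $H$-free tournament on $n$ vertices; I must find a transitive subtournament of order at least $n^{\epsilon}$. Call a vertex $u\in V(T)$ \emph{good} if $T$ contains no copy of $H\mid S$ whose vertex set lies in $N^{+}(u)$ or in $N^{-}(u)$ — more precisely I build a maximal collection of pairwise ``parallel'' copies of $H\mid S$ inside $T$ and use them to define an auxiliary partition of (a large part of) $V(T)$ into blocks, each block being either a copy of $H\mid S$ or a set inducing an $(H\mid S)$-free tournament. The standard dichotomy is: either $T$ contains a ``blow-up'' structure rich enough to embed $H$ (using that $H$ is obtained from $H'$ by substituting $H\mid S$ for $v$), which is impossible since $T$ is $H$-free, or else $T$ admits a partition/quotient that is $H'$-free on roughly $n^{\epsilon_1}$-sized pieces. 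Applying the EHC bound for $H'$ at the ``coarse'' level yields a transitive subtournament of the quotient of order at least $(n')^{\epsilon_2}$ where $n'$ is the number of blocks, and applying the EHC bound for $H\mid S$ inside one block of size at least $n/n'$ (or inside an $(H\mid S)$-free block, where one gets an even better transitive bound) yields a transitive piece of order at least $(n/n')^{\epsilon_1}$; concatenating along a transitive ordering of the chosen blocks multiplies these, giving a transitive subtournament of $T$ of order at least $(n')^{\epsilon_2}\cdot(n/n')^{\epsilon_1}\ge n^{\epsilon_1\epsilon_2}=n^{\epsilon}$ after optimizing over $n'$. This is exactly the argument of Alon, Pach and Solymosi~\cite{fdo}, specialized to tournaments.

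The main obstacle — and the point requiring the most care — is making the dichotomy precise: one must show that if $T$ is $H$-free then it cannot contain the particular ``blow-up of $H'$ by copies of $H\mid S$'' that would reconstruct $H$, and simultaneously extract from the failure of that embedding an honest $H'$-free quotient to which the inductive hypothesis applies. This requires choosing the block partition greedily/maximally (so that no further copy of $H\mid S$ can be added), verifying that the resulting quotient on the blocks is genuinely $H'$-free (otherwise an induced $H'$ in the quotient, together with one copy of $H\mid S$ in the block playing the role of $v$, would give an induced $H$ in $T$), and checking that the transitive subtournaments found at the two scales can indeed be glued along the block order without creating back-arcs. The arithmetic optimization over the number of blocks $n'$ (balancing $(n')^{\epsilon_2}$ against $(n/n')^{\epsilon_1}$) is routine and I will not belabor it. Since this statement is attributed to \cite{fdo} and stated here only as background, I will present the argument at the level of detail above and refer to \cite{fdo} for the remaining bookkeeping.
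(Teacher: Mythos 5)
Your first paragraph is the right reduction, and it is in fact all the paper itself offers: the paper states this theorem without proof, as a corollary of \cite{fdo}. Taking a minimum-size counterexample $H$ with a nontrivial homogeneous set $S$, observing that $H\mid S$ and the quotient $H'$ are both strictly smaller (here $\mid S\mid>1$ and $S\neq V(H)$ are both used), hence satisfy EHC by minimality, and invoking the substitution-closure theorem of Alon, Pach and Solymosi to conclude that $H$ satisfies EHC is exactly the intended argument. (A minor quantitative caveat: the exponent produced by \cite{fdo} for the substituted tournament is not $\epsilon_1\epsilon_2$ itself but a smaller positive function of $\epsilon_1,\epsilon_2$; since only the existence of some $\epsilon>0$ is needed, this is harmless, but you should not assert that $\epsilon_1\epsilon_2$ "witnesses" EHC for $H$.)

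The genuine gap is in your middle paragraph, which purports to sketch the substitution-closure proof but describes a mechanism that fails as stated. You partition (much of) $V(T)$ into blocks, each a copy of $H\mid S$ or an $(H\mid S)$-free set, then speak of an "$H'$-free quotient" on the blocks and of concatenating a transitive subtournament of that quotient with transitive subtournaments found inside blocks, multiplying the bounds to get $(n')^{\epsilon_2}(n/n')^{\epsilon_1}$. But distinct blocks of such a greedy partition are in general not complete to or from one another, so the quotient tournament on blocks is not well defined; the claim that an induced $H'$ in the quotient plus one copy of $H\mid S$ yields an induced $H$ in $T$ does not go through (each other block would have to contribute a single vertex that is pure with respect to that copy, which nothing guarantees); and, most importantly, transitive sets chosen inside different blocks cannot be concatenated along a block ordering, because arcs between blocks go in both directions. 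Concatenation of this kind is only valid when the parts are homogeneous to each other (as with the smooth structures used elsewhere in this paper), and an arbitrary $H$-free $T$ admits no such decomposition. The actual argument in \cite{fdo} is of a different nature: it analyzes, for an induced copy of $H\mid S$, the vertices that are pure (complete to or from) with respect to it, applying the bound for $H'$ when such a pure set is large, and handles the complementary case by a counting/deletion argument using the bound for $H\mid S$. Since you ultimately defer to \cite{fdo} for the substitution theorem, your conclusion stands at the citation level, exactly as in the paper; but the dichotomy you wrote out should not be presented as an outline of that proof, because its key steps (well-defined quotient, $H'$-freeness of the quotient, and gluing of transitive pieces) are precisely the points that fail without homogeneity.
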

For an integer $t$, we call the complete bipartite graph $ K_{1,t} $ a \textit{star}. Let $S$ be a star with vertex set $ \lbrace $$c$,\textit{$l_{1}$},...,\textit{$l_{t}$}$\rbrace $, where $c$ is adjacent to \textit{$l_{1}$},...,\textit{$l_{t}$}. We call $c$ the \textit{center of the star}, and \textit{$l_{1}$},...,\textit{$l_{t}$} the \textit{leaves of the star}. Note that in the case $ t = 1 $ we may choose arbitrarily any one of the two vertices to be the center of the star, and the other vertex is then considered to be the leaf. Let $ \theta = (v_{1},...,v_{n}) $ be an ordering of the vertex set $V(T)$ of an $ n- $vertex tournament $T$. We say that a vertex $ v_{j} $ is \textit{between two vertices $ v_{i},v_{k} $ under} $ \theta = (v_{1},...,v_{n}) $ if $ i < j < k $ or $ k < j < i $.
An arc $ (v_{i},v_{j}) $ is a \textit{backward arc under} $ \theta $ if $ i > j $. The \textit{set of backward arcs of $T$ under} $ \theta $ is denoted by $E(\theta) $. The \textit{graph of backward arcs under} $ \theta $, denoted by $ B(T,\theta) $, is the undirected graph that has vertex set $V(T)$, and $ v_{i}v_{j} \in E(B(T,\theta)) $ if and only if $ (v_{i},v_{j}) $ or $ (v_{j},v_{i}) $ is a backward arc of $T$ under $ \theta $. A \textit{right star} in $ B(T,\theta) $ is an induced subgraph with vertex set $ \lbrace v_{i_{0}},...,v_{i_{t}} \rbrace $, such that $ B(T,\theta) $$ \mid $$ \lbrace v_{i_{0}},...,v_{i_{t}} \rbrace $ is a star with center $ v_{i_{t}} $, and $ i_{t} > i_{0},...,i_{t-1} $. In this case we also say that $ \lbrace v_{i_{0}},...,v_{i_{t}} \rbrace $ is a right star in $T$ under $\theta$.
A \textit{left star} in $ B(T,\theta) $ is an induced subgraph with vertex set $ \lbrace v_{i_{0}},...,v_{i_{t}} \rbrace $, such that $ B(T,\theta) $$ \mid $$ \lbrace v_{i_{0}},...,v_{i_{t}} \rbrace $ is a star with center $ v_{i_{0}} $, and $ i_{0} < i_{1},...,i_{t} $. In this case we also say that $ \lbrace v_{i_{0}},...,v_{i_{t}} \rbrace $ is a left star in $T$ under $\theta$.
A \textit{star} in $ B(T,\theta) $, is a left star or a right star. A $4$-\textit{path} in $ B(T,\theta) $, denoted by $P_4$,  is an induced subgraph with vertex set $\{v_{j_1},v_{j_2},v_{j_3},v_{j_4}\}$, such that $P_4$ is a path, and its ends are consecutive under $\theta$. In this case we also say that $ \lbrace v_{j_1},v_{j_2},v_{j_3},v_{j_4} \rbrace $ is a $4$-path in $T$ under $\theta$.\vspace{2mm} 

Let $T$ be a tournament and assume that there exists an ordering $ \theta $ of its vertices such that every connected component of $ B(T,\theta) $ is either a star or a singleton. In this case $T$ is called \textit{super galaxy} and $\theta$ is called a \textit{star ordering}. If in addition no center of a star is between leaves of another star, then the corresponding ordering is called a \textit{galaxy ordering} and $T$ is called a \textit{galaxy}. And if this star ordering satisfy the condition that is fully characterized in \cite{kg}, then the corresponding ordering is called a \textit{constellation ordering} and $T$ is called a \textit{constellation}. Constellations is a larger family of tournaments than galaxies \cite{kg}. In the galaxy case no center of a star is between leaves of another star. However in the constellation scenario this condition is replaced by a much weaker one.
\begin{theorem} \cite{polll} \label{b}
Every galaxy satisfies the Erd\"{o}s$ - $Hajnal Conjecture.
\end{theorem}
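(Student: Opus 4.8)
The plan is to prove the equivalent assertion that there is $\epsilon=\epsilon(H)>0$ such that every tournament $T$ on $n$ vertices with no transitive subtournament of size $n^{\epsilon}$ must contain $H$. Fix a galaxy ordering $\theta=(u_1,\dots,u_h)$ of $H$: deleting the backward arcs under $\theta$ leaves a transitive backbone, and the backward arcs form vertex-disjoint stars, so $H$ is a transitive tournament in which, for each star, the arcs between a chosen centre and its leaves have been reversed (a left star and a right star being mirror images, and singleton components contributing nothing). The galaxy hypothesis --- that no centre lies between the leaves of another star --- is what will let us insert the stars one at a time.

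The argument splits into an extraction step and an embedding step. The extraction step is a standard lemma in this area: for every $c>0$ there is $\phi(c)>0$ such that every tournament on $n$ vertices contains a transitive subtournament of size $n^{\phi(c)}$ or a subtournament $T'$ on at least $n^{\phi(c)}$ vertices that is \emph{$c$-coherent}, i.e., between any two disjoint subsets of $T'$ of size at least $c\,|T'|$ there are arcs in both directions, and on every such subset every vertex has at least $c\,|T'|$ out-neighbours and at least $c\,|T'|$ in-neighbours; it is proved by a recursive peeling argument that repeatedly splits off a nearly complete-to or complete-from part, or a nearly dominant or dominated vertex, and accumulates a large transitive subtournament when the recursion runs deep. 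The embedding step asserts that for every galaxy $H$ there exist $c=c(H)>0$ and $N_0=N_0(H)$ such that every $c$-coherent tournament on at least $N_0$ vertices contains $H$. Granting both, let $\epsilon(H)$ be the minimum of $\phi(c(H))$ and a constant small enough that $n^{\epsilon}<2$ for all $n<N_0^{1/\phi(c(H))}$: if $T$ is $H$-free and $n\ge N_0^{1/\phi(c(H))}$ then the $c$-coherent alternative would give $H\subseteq T'\subseteq T$, so $T$ has a transitive subtournament of size $n^{\phi(c(H))}\ge n^{\epsilon}$; and for smaller $n$ the conclusion is immediate since every tournament on at least two vertices contains a transitive subtournament of size $2$.

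The core is the embedding into a $c$-coherent tournament, which I would prove by induction on the number of stars of $H$. One maintains a linear-sized active region $A$ --- the set into which all not-yet-placed vertices of $H$ will go --- together with a partial embedding of the vertices already seen under $\theta$, such that $A$ is complete from every already-placed vertex that dominates in $H$ all the not-yet-placed vertices. The base case is a galaxy with at most one star: a starless $H$ is transitive and embeds greedily, while otherwise $H$ is a transitive tournament with a single centre $c$ whose arcs to its leaf set $L$ are reversed, and one embeds the backbone vertices in order (shrinking $A$ to the relevant out-neighbourhoods), locates inside the current $A$ a short transitive chain to play the role of $L$, and finally locates a vertex dominating the images of $L$ and dominated by the remaining placed vertices; $c$-coherence and the bidegree property keep each such choice available while $A$ is linear-sized, which it remains after only boundedly many restrictions. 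The inductive step peels off the first star $\ss$ under $\theta$ together with the backbone vertices up to it, applies the base-case reasoning inside $A$ to place $\ss$ --- reserving, when $\ss$ is a right star, room for its centre inside the in-neighbourhoods of the already-placed leaves of $\ss$ --- and then restricts to a sub-region in which the remaining galaxy is embedded by induction; the galaxy condition, that no later centre lies among the leaves of $\ss$, is precisely what keeps this reservation from clashing with the out-neighbourhood restrictions forced by the backbone, so $A$ shrinks by only a bounded factor at each of the boundedly many stages and is still linear-sized at the end.

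The step I expect to be the main obstacle is this last one: choosing the quantitative notion of $c$-coherence so that it survives, with only a controlled weakening of the constant, the $O(|H|)$ neighbourhood restrictions that enforce all the complete-to and complete-from conditions, and verifying that it is the galaxy property itself --- and not the mere existence of a star ordering --- that keeps these constraints simultaneously satisfiable (this is exactly the point at which a constellation must replace the galaxy condition by the weaker characterising condition of \cite{kg}). The extraction lemma, the passage from $H$-freeness to the $c$-coherent case, and the handling of small $n$ are routine, as is, if one prefers a shorter route, an initial reduction to prime $H$ via Theorem \ref{o} together with the substitution theorem of \cite{fdo}.
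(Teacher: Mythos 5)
This theorem is not proved in the paper you are reading; it is quoted from \cite{polll}, where the argument runs through a smallest counterexample: such a tournament is $\epsilon$-critical, one builds in it a smooth $(c,\lambda,w)$-structure alternating linear sets and transitive sets (exactly the machinery reproduced in Section 2 here), the leaves of each star are embedded inside a transitive set and the centers and singletons inside linear sets, and the galaxy condition dictates the $0/1$ pattern $w$. Your plan replaces this by an extraction of a ``$c$-coherent'' subtournament followed by an induction on the number of stars, and this is where the genuine gap lies. Your notion of $c$-coherence (arcs in both directions between any two linear-sized sets, plus linear in- and out-degrees) only gives the \emph{existence} of single arcs across linear pairs; it gives no quantitative density control. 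But every step of your embedding needs to find vertices realizing \emph{mixed} adjacency patterns with respect to a bounded set of already-placed vertices while staying inside an already-restricted linear region: for instance, a galaxy ordering allows singleton vertices to lie between the leaves of a star, so the center of a right star must beat all its leaves (placed earlier) while losing to singletons placed between and after those leaves. In the actual proof this is handled by smoothness ($d(\{v\},S_j)\geq 1-\lambda$), which makes common (in/out)-neighbourhoods inside a set lose only a $\lambda$-fraction per constraint (Lemma \ref{u}), and by $\epsilon$-criticality, which makes ``not complete to/from'' statements (Lemma \ref{f}) and the counting Lemmas \ref{v}--\ref{x} available relative to $tr(T)$. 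None of these survive in your abstract $c$-coherent subtournament, and your stated invariant (``$A$ is complete from every already-placed vertex that dominates all not-yet-placed vertices'') does not even record the adjacencies to the placed leaves, which is precisely what the reservation for the center has to interact with. You acknowledge this as ``the main obstacle'' but do not supply the lemma that would overcome it, so the core of the proof is missing rather than merely routine.

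Two smaller points. First, your extraction lemma is fine for the ``no linear complete pair'' half (the standard splitting recursion $tr(T)\geq tr(A)+tr(B)$ gives a polynomial transitive subtournament if one can always split), but the added minimum-degree clause and the clause ``on every such subset'' are not obtained by that recursion and are not standard in this form; the literature avoids the issue by working directly with an $\epsilon$-critical tournament, where linear degrees and the complete-pair lemma come for free. Second, even granting extraction, coherence of $T'$ is a property relative to $|T'|$ only, so after boundedly many restrictions you would still need the density-type bookkeeping above; simply saying $A$ ``shrinks by a bounded factor'' does not substitute for it. If you want to salvage the outline, the cleanest fix is to replace $c$-coherence by the smooth-structure framework of Corollary \ref{i} and redo your induction on stars inside such a structure --- at which point you have essentially reconstructed the proof of \cite{polll}.
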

\begin{theorem} \cite{kg} \label{c}
Every constellation satisfies the Erd\"{o}s-Hajnal Conjecture.
\end{theorem}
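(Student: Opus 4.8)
The plan is to refine the proof of Theorem \ref{b}. Two ingredients underlie that proof, and I would reuse both. First, the elementary observation that for any tournament $T$ and any ordering $\theta$ of $V(T)$, a set of vertices spanning no backward arc under $\theta$ induces a transitive subtournament; hence $tr(T)\ge\alpha(B(T,\theta))$ for every $\theta$ (writing $tr(T)$ for the maximum order of a transitive subtournament of $T$ and $\alpha(\cdot)$ for the independence number), and, more flexibly, large transitive subtournaments can be assembled by concatenating transitive subtournaments across a sequence of vertex subsets whose ``in-between'' arcs point overwhelmingly forward. Second, the structural framework of \cite{fdo,polll}: proving the EHC for a tournament $H$ reduces to showing that every sufficiently rich ``coherent'' decomposition of an $n$-vertex tournament --- a long sequence of polynomially large vertex subsets equipped with an almost-transitive between-structure --- must contain an induced copy of $H$. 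Granting this, an $H$-free tournament cannot carry such a decomposition, and iterating the decomposition dichotomy yields $tr(T)\ge n^{\epsilon(H)}$.

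The substantial step is therefore the embedding of $H$ into a coherent decomposition, and this is where the constellation hypothesis is used. I would fix a constellation ordering $\theta_H=(u_1,\dots,u_h)$ of $H$ with stars $\sigma_1,\dots,\sigma_k$, and embed $H$ into a coherent decomposition $(X_1,\dots,X_N)$ of the host by sending the leaves of each star into consecutive blocks and its center to a single, carefully chosen vertex, just as in the galaxy case. For a galaxy, the property that no center lies between leaves of another star makes the blocks used by distinct stars disjoint and monotone, so the embedding decouples into independent per-star subproblems and the only backward arcs created are the stars themselves, giving an induced copy of $H$. For a constellation I would instead process the stars in the order prescribed by the condition of \cite{kg}: whenever a center $u$ of $\sigma_t$ has to be placed between leaves of some $\sigma_{t'}$, I embed $u$ together with the already-placed part of $\sigma_{t'}$, using the coherence of the decomposition to pick a vertex whose adjacency to the relevant blocks is correct and overwhelmingly forward, so that no spurious backward arc appears. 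Since the \cite{kg} condition limits how deeply such interleavings can nest, a number of passes and refinements of the decomposition bounded in terms of $|H|$ suffices to complete the copy of $H$.

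The main obstacle I anticipate is precisely this interleaving bookkeeping. When a star center lies between leaves of another star the per-star subproblems stop being independent, and one must check (i) that the condition characterized in \cite{kg} caps the nesting depth of the resulting dependencies by a function of $|H|$ alone, and (ii) that each refinement of the coherent decomposition degrades its parameters --- the coherence constant and the block sizes --- by only a bounded factor or bounded power, so that the construction still terminates while the blocks remain polynomially large. This is exactly the place where one must show that the weaker constellation condition, rather than the galaxy condition, is still enough. Once (i) and (ii) are in hand, one extracts an explicit $\epsilon(H)>0$, decreasing in $|H|$, with the property asserted in the statement; the remainder of the argument is a careful but essentially routine adaptation of the proof of Theorem \ref{b}.
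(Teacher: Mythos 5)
The paper does not prove this statement at all---it is imported from \cite{kg}---so your sketch has to stand on its own against Choromanski's argument, and it does not: it is a plan whose decisive steps you yourself leave open. The two items you flag as ``to check''---that the condition characterized in \cite{kg} caps the nesting of interleavings by a function of $|H|$ alone, and that each refinement of the decomposition degrades the coherence constant and block sizes only boundedly so that the blocks stay polynomially large---are not peripheral bookkeeping; together with the embedding lemma they \emph{are} the theorem. Worse, at no point do you state the constellation condition (this paper also declines to state it), so no step of your argument can actually invoke it; an argument of the shape you describe, if it worked as written, would apply to any tournament admitting a mere star ordering, which is not a known case of the conjecture and is exactly what the galaxy and constellation conditions exist to avoid. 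The one concrete device you offer for a center $u$ of $\sigma_t$ that must sit between leaves of $\sigma_{t'}$---embed $u$ together with the already-placed part of $\sigma_{t'}$, ``using the coherence of the decomposition to pick a vertex whose adjacency to the relevant blocks is correct''---is precisely where the naive adaptation of the galaxy proof breaks: once $u$ must be placed among the blocks reserved for the leaves of $\sigma_{t'}$, the $1-\lambda$ density guarantees of a smooth structure do not by themselves prevent spurious backward arcs between $u$ and those leaves when several stars interact, and controlling this is where \cite{kg} needs its specific condition and a substantially more elaborate embedding analysis, not ``a bounded number of passes'' over the galaxy argument.

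To turn this into a proof you would need to (a) state the constellation condition precisely, (b) prove the embedding lemma for constellation orderings into smooth $(c,\lambda,w)$-structures under that condition, with the parameters quantified in terms of $|H|$, and (c) establish your claims (i) and (ii); that amounts to reproving the main result of \cite{kg}, which is why the present paper simply cites it.
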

Denote by $ K_{6} $ the $6$$-$vertex tournament with vertex set $\lbrace v_{1},...,v_{6} \rbrace $, such that under the ordering $\theta = (v_{1},...,v_{6}) $ of its vertices, the set of backward arcs $E(\theta)$ is $ \lbrace (v_{4},v_{1}),(v_{6},v_{3}),(v_{6},v_{1}),(v_{5},v_{2}) \rbrace $. We call this ordering $\theta$ the \textit{canonical ordering}. Note that $ K_{6} $ is a prime, not galaxy and the conjecture for $ K_{6} $ remains an open question.
\begin{theorem} \cite{bnmm} \label{d}
If $H$ is a $6$$ - $vertex tournament not isomorphic to $ K_{6} $ then it satisfies the E$-$H conjecture.
\end{theorem}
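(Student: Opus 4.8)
The plan is to reduce the claim to a finite case analysis over the six-vertex tournaments (there are only finitely many up to isomorphism) and to dispatch each case with the tools already available. First I would dispose of the imprimitive ones. By Theorem~\ref{o}, or rather by the substitution theorem of Alon et al.\ \cite{fdo} that underlies it, a tournament obtained by substitution from tournaments satisfying the EHC again satisfies the EHC. If a six-vertex tournament $H$ is not prime it has a nontrivial homogeneous set $S$ with $2 \le |S| \le 5$, so $H$ is a substitution of $H\mid S$ (a tournament on at most five vertices) into a vertex of a tournament on $7-|S| \le 5$ vertices; both factors satisfy the EHC by \cite{polll}, hence so does $H$. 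It therefore remains only to treat the prime six-vertex tournaments.

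Next, for each prime six-vertex tournament $H$ I would search its finitely many vertex orderings for a galaxy ordering, or, failing that, for a constellation ordering in the sense of \cite{kg}; whenever such an ordering exists, Theorem~\ref{b} or Theorem~\ref{c} gives the conclusion at once. After this sweep there remains only a short, explicit list of prime six-vertex tournaments that are neither galaxies nor constellations, and $K_6$ is one of the entries on it.

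For each tournament $H$ on this residual list with $H \ncong K_6$ I would then argue directly, following the standard scheme of the area. Fix a small $\epsilon > 0$, let $T$ be an $H$-free tournament on $n$ vertices, and build a transitive subtournament of order $n^{\epsilon}$ by a recursion. The engine is a structural lemma, proved by an \textit{ad hoc} Ramsey-type analysis tailored to the particular $H$, asserting that $H$-freeness forces a linear-size forward configuration in $T$ --- typically disjoint sets $X, Y \subseteq V(T)$ with $|X|, |Y| \ge cn$ and $X$ complete to $Y$. Since $T\mid X$ and $T\mid Y$ are again $H$-free, any transitive subtournament of $T\mid X$ followed by any transitive subtournament of $T\mid Y$ is transitive; writing $f(m)$ for the largest transitive subtournament guaranteed in an $H$-free tournament on $m$ vertices, this gives $f(n) \ge 2f(cn)$, which unwinds to $f(n) \ge n^{\epsilon}$ for $\epsilon = \log 2/\log(1/c)$. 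In some cases the forced structure instead confines $T$ to a linear-size subtournament covered by the galaxy, constellation, or at-most-five-vertex cases, and one quotes the earlier results.

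The hard part is exactly this last step, and it is also why $K_6$ has to be excluded: $K_6$ is prime, it is not a galaxy, it is not a constellation, and none of the available structural arguments produces the required linear-size forward configuration in a $K_6$-free tournament, so the conjecture for $K_6$ remains open. Apart from that single exception, the three steps together account for every six-vertex tournament: the imprimitive ones by substitution, the galaxy and constellation ones by Theorems~\ref{b}--\ref{c}, and the finitely many remaining prime ones by the case-by-case structural arguments.
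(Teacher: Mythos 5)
Your first two steps are fine (and match the standard reductions): imprimitive six-vertex tournaments are substitutions of tournaments on at most five vertices, which satisfy the conjecture by \cite{fdo} and \cite{polll}, and the prime ones admitting a galaxy or constellation ordering are handled by Theorems \ref{b} and \ref{c}. But note that the paper you are working in does not prove this statement at all -- it quotes it from \cite{bnmm} -- so the entire burden of a proof falls on your third step, and that is exactly where your proposal stops being a proof. For the finitely many prime six-vertex tournaments that are neither galaxies nor constellations you only assert that ``an ad hoc Ramsey-type analysis tailored to the particular $H$'' will produce the needed structure; no such analysis is given for any of them, and this case-by-case work is the actual content of \cite{bnmm} (a full paper), just as the analogous work for seven-vertex tournaments occupies Sections 3.2--3.5 here.

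Moreover, the specific engine you propose is the wrong one. Demanding that every $H$-free tournament contain two disjoint linear-size sets $X,Y$ with $X$ complete to $Y$ is the \emph{strong} Erd\"os--Hajnal property, which is strictly stronger than what the theorem asserts and is not what \cite{bnmm} establishes; for the tournaments in question no such lemma is available, and the known machinery does not produce completely joined linear pairs. Indeed, Lemma \ref{f} shows that inside an $\epsilon$-critical tournament a linear-size set cannot be complete to a large transitive set, which is precisely why the actual arguments (in \cite{bnmm}, and in this paper's treatment of the classes $\mathcal S$ and $\mathcal H$) work instead with smooth $(c,\lambda,w)$-structures from Theorem \ref{t}, where the bipartite densities are only $1-\lambda$, and derive a contradiction by explicitly locating a copy of the forbidden tournament inside an $\epsilon$-critical $H$-free tournament. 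Your recursion $f(n)\ge 2f(cn)$ is sound \emph{if} the complete-pair lemma were true, but you have given no route to proving that lemma for any of the residual tournaments, so the proof of the hard cases -- and hence of the theorem -- is missing.
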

Tournament $ C_{5} $ is the unique tournament on five vertices such that each of its vertices has exactly two out-neighbors and two in-neighbors. Tournament $ C_{5} $ is prime and not a galaxy. Let $v_{1},...,v_{5}$ be the vertices of $ C_{5} $ such that $E(C_{5})=\lbrace (v_{1},v_{2}),(v_{2},v_{3}),(v_{3},v_{4}),(v_{4},v_{5}),(v_{5},v_{1}),(v_{2},v_{4}),(v_{1},v_{3}),(v_{5},v_{2}),(v_{4},v_{1}),(v_{3},v_{5}) \rbrace $.

Tournament $ L_{1} $ is obtained from $ C_{5} $ by adding one extra vertex $v_{6}$ and making it adjacent to exactly one vertex of $ C_{5} $. It does not matter to which one since all tournaments obtained by this procedure are isomorphic, then we may assume without loss of generality that $(v_{6},v_{5}) \in E(L_{1})$.

Tournament $ L_{2} $ is obtained from $ C_{5} $ by adding one extra vertex $v_{6}$ and making it adjacent from $ 3 $ vertices of $ C_{5} $ that induce a cyclic triangle. It does not matter which cyclic triangle since all tournaments obtained by this procedure are isomorphic, then we may assume without loss of generality that $\lbrace v_{6} \rbrace$ is adjacent from $\lbrace v_{1},v_{2},v_{4} \rbrace$.\vspace{2mm}\\
In this paper we prove a theorem that describes the structure of all seven-vertex tournaments, and we prove the Erd\"{o}s-Hajnal conjecture for several $(K_{6},L_{1},L_{2})-$free $7$-vertex tournaments having no constellation ordering, some $7$-vertex tournaments containing $C_5$, and for couples and triplets of $7$-vertex tournaments.
This paper is organized as follows:
\begin{itemize}
\item In section $2$ we define special classes of tournaments, and we state a theorem that describes the structure of all tournaments on seven vertices and reduces the question  about the correctness of EHC for seven-vertex tournaments to special classes.
\item In section $3$ we study the structure of $\epsilon$-critical tournaments with forbidden subtournaments.
\item In section $4$ we prove  EHC for some classes of seven-vertex tournaments.
\item In section $5$ we worke on the analogous conjecture of Erd\"{o}s and Hajnal and confirm it for couples and triplets of tournaments.
\item In section $6$ we give an elementary proof of Theorem \ref{r}.
\end{itemize}

\section{Analyzing the structure of 7-vertex tournaments}
In this section we define three crucial classes of $7$-vertex tournaments, denoted by $\mathcal{S}$, $\mathcal{R}$, and $\mathcal{H}$. Then we state a theorem that describes the structure of all $7$-vertex tournaments and reduces the question about the correctness of EHC for tournaments on $7$ vertices to the tournaments in the classes $\mathcal{S}$, $\mathcal{R}$, and $\mathcal{H}$. To this end we need to define tournaments on seven vertices.\vspace{2mm}\\ 
 We will start by defining the prime, non-isomorphic, non-symmetric $(K_{6},L_{1},L_{2})-$free 7-vertex tournaments,  having no constellation ordering, denoted by $ S_{i} $ for $i=1,...,15$. Let $V(S_{i}) = \lbrace a,b,c,d,e,f,v \rbrace$ for $i=1,...,15$
\vspace{1mm}\\
Let $i\in\{1,...,15\}$. To make it easy define $S_{i}$, we will define an ordering $ \theta_{i} $ of $V(S_i)$ and the set $E(\theta_{i})$ of backward arcs under this ordering $E(\theta_{i})$.
\begin{itemize}[-]
\item $S_{1}$: $\theta_{1} := (d,f,v,a,b,e,c)$, $E(\theta_{1}) = \lbrace (v,d),(b,d),(c,d),(e,f),(e,a)\rbrace$. 
\item $S_{2}$: $\theta_{2} := (v,d,e,f,a,b,c)$, $E(\theta_{2}) = \lbrace (f,v),(a,d),(b,d),(c,a) \rbrace $.
\item $S_{3}$: $ \theta_{3} := (f,v,b,e,c,d,a)$, $E(\theta_{3}) = \lbrace (e,f),(d,e),(d,f),(a,b) \rbrace$. 
\item $S_{4}$: $\theta_{4} := (v,d,f,c,a,e,b)$, $E(\theta_{4}) = \lbrace (f,v),(e,f),(b,c),(b,d) \rbrace $.
\item $S_{5}$: $\theta_{5} := (v,a,b,c,f,e,d)$, $E(\theta_{5}) = \lbrace (f,v),(d,a),(e,b),(d,b) \rbrace $.
\item $S_{6}$: $\theta_{6} := (v,a,b,c,f,e,d)$, $E(\theta_{6}) = \lbrace (f,v),(d,a),(e,b),(e,a) \rbrace $. 
\item $S_{7}$: $\theta_{7} := (v,b,e,a,c,f,d)$, $E(\theta_{7}) = \lbrace (f,v),(d,b),(d,c),(f,e),(a,b) \rbrace $. 
\item $S_{8}$: $\theta_{8} := (v,c,a,f,e,d,b)$, $E(\theta_{8}) = \lbrace (f,v),(b,c),(b,f),(d,a) \rbrace $.
\item $S_{9}$: $\theta_{9} := (v,a,b,c,d,f,e)$, $E(\theta_{9}) = \lbrace (f,v),(c,a),(e,c),(e,b) \rbrace $.
\item $S_{10}$: $\theta_{10} := (d,c,v,e,a,f,b)$, $E(\theta_{10}) = \lbrace (f,v),(f,e),(e,c),(a,d),(b,d) \rbrace $. 
\item $S_{11}$: $\theta_{11} := (b,c,v,a,d,e,f)$, $E(\theta_{11}) = \lbrace (f,c),(f,v),(d,v),(a,b),(e,b) \rbrace $.
\item $S_{12}$: $\theta_{12} := (b,c,a,f,v,d,e)$, $E(\theta_{12}) = \lbrace (d,a),(e,c),(d,b),(a,b),(e,f) \rbrace $.
\item $S_{13}$: $\theta_{13} := (b,c,d,a,f,v,e)$, $E(\theta_{13}) = \lbrace (f,b),(e,c),(v,d),(a,b),(e,f) \rbrace $.
\item $S_{14}$: $\theta_{14} := (b,c,d,a,e,v,f)$, $E(\theta_{14}) = \lbrace (f,b),(e,c),(v,d),(a,b),(f,e) \rbrace $.
\item $S_{15}$: $\theta_{15} := (a,b,d,c,e,v,f) $, $E(\theta_{15}) = \lbrace (e,a),(v,a),(f,a),(v,b),(f,b),(f,d) \rbrace $.
\end{itemize}
Now we will define the non-isomrphic, non-symmetric 7-vertex tournaments, denoted by $R_{i}$ for $i=1,...,11$, that are obtained from $K_{6}$ by adding one extra vertex $v_{7}$. The following are the in-neighbors of the added vertex $v_{7}$ in $R_{i}$ for $i = 1,2,...,11$.\\
$N^{-}_{R_{1}}(v_{7}) = \phi$, $N^{-}_{R_{2}}(v_{7}) = \lbrace v_{1} \rbrace$, $N^{-}_{R_{3}}(v_{7}) = \lbrace v_{1},v_{3},v_{4},v_{5},v_{6} \rbrace$, $N^{-}_{R_{4}}(v_{7}) = \lbrace v_{1},v_{2} \rbrace$, $N^{-}_{R_{5}}(v_{7}) = \lbrace v_{1},v_{2},v_{4} \rbrace$, $N^{-}_{R_{6}}(v_{7}) = \lbrace v_{1},v_{3},v_{4},v_{6} \rbrace$, $N^{-}_{R_{7}}(v_{7}) = \lbrace v_{1},v_{5} \rbrace$, $N^{-}_{R_{8}}(v_{7}) = \lbrace v_{1},v_{3},v_{5} \rbrace$, $N^{-}_{R_{9}}(v_{7}) = \lbrace v_{2},v_{5},v_{6} \rbrace$, $N^{-}_{R_{10}}(v_{7}) = \lbrace v_{5},v_{6} \rbrace$, $N^{-}_{R_{11}}(v_{7}) = \lbrace v_{3},v_{5},v_{6} \rbrace$.
\vspace{1.5mm}\\
Finally we will define the prime, non-isomrphic, non-symmetric 7-vertex tournaments, denoted by $H_{i}$ for $i=1,...,59$, that are obtained from $L_{1}$ or $L_{2}$ by adding one extra vertex $v_{7}$. The following are the out-neighbors or the in-neighbors of the added vertex $v_{7}$ in $H_{i}$ for $i = 1,2,...,59$.
\vspace{1.5mm}\\
$H_{1} := L_{1} + v_{7}$, such that $N^{+}(v_{7}) = \lbrace v_{6} \rbrace$. $H_{2} := L_{1} + v_{7}$, such that $N^{-}(v_{7}) = \lbrace v_{3} \rbrace$. $H_{3} := L_{1} + v_{7}$, such that $N^{-}(v_{7}) = \lbrace v_{3},v_{4} \rbrace$. $H_{4} := L_{1} + v_{7}$, such that $N^{-}(v_{7}) = \lbrace v_{2} \rbrace$. $H_{5} := L_{1} + v_{7}$, such that $N^{-}(v_{7}) = \lbrace v_{1},v_{3} \rbrace$. $H_{6} := L_{1} + v_{7}$, such that $N^{-}(v_{7}) = \lbrace v_{1} \rbrace$. $H_{7} := L_{2} + v_{7}$, such that $N^{+}(v_{7}) = \lbrace v_{3} \rbrace$. $H_{8} := L_{2} + v_{7}$, such that $N^{+}(v_{7}) = \lbrace v_{6} \rbrace$. $H_{9} := L_{1} + v_{7}$, such that $N^{-}(v_{7}) = \lbrace v_{4} \rbrace$. $H_{10} := L_{2} + v_{7}$, such that $N^{-}(v_{7}) = \lbrace v_{2} \rbrace$. $H_{11} := L_{1} + v_{7}$, such that $N^{-}(v_{7}) = \lbrace v_{1},v_{4} \rbrace$. $H_{12} := L_{2} + v_{7}$, such that $N^{+}(v_{7}) = \lbrace v_{5} \rbrace$. $H_{13} := L_{1} + v_{7}$, such that $N^{+}(v_{7}) = \lbrace v_{3} \rbrace$. $H_{14} := L_{1} + v_{7}$, such that $N^{-}(v_{7}) = \lbrace v_{3},v_{4},v_{5} \rbrace$. $H_{15} := L_{1} + v_{7}$, such that $N^{+}(v_{7}) = \lbrace v_{2} \rbrace$. $H_{16} := L_{2} + v_{7}$, such that $N^{+}(v_{7}) = \lbrace v_{1} \rbrace$. $H_{17} := L_{1} + v_{7}$, such that $N^{+}(v_{7}) = \lbrace v_{1} \rbrace$. $H_{18} := L_{2} + v_{7}$, such that $N^{-}(v_{7}) = \lbrace v_{3} \rbrace$. $H_{19} := L_{1} + v_{7}$, such that $N^{-}(v_{7}) = \lbrace v_{5} \rbrace$. $H_{20} := L_{2} + v_{7}$, such that $N^{-}(v_{7}) = \lbrace v_{6} \rbrace$. $H_{21} := L_{1} + v_{7}$, such that $N^{+}(v_{7}) = \lbrace v_{4} \rbrace$. $H_{22} := L_{1} + v_{7}$, such that $N^{+}(v_{7}) = \lbrace v_{1},v_{3},v_{6} \rbrace$. $H_{23} := L_{2} + v_{7}$, such that $N^{+}(v_{7}) = \lbrace v_{4} \rbrace$. $H_{24} := L_{2} + v_{7}$, such that $N^{-}(v_{7}) = \lbrace v_{5} \rbrace$. $H_{25} := L_{1} + v_{7}$, such that $N^{-}(v_{7}) = \lbrace v_{6} \rbrace$. $H_{26} := L_{1} + v_{7}$, such that $N^{+}(v_{7}) = \lbrace v_{4},v_{5} \rbrace$. $H_{27} := L_{2} + v_{7}$, such that $N^{-}(v_{7}) = \lbrace v_{2},v_{4} \rbrace$. $H_{28} := L_{2} + v_{7}$, such that $N^{-}(v_{7}) = \lbrace v_{1},v_{4} \rbrace$. $H_{29} := L_{1} + v_{7}$, such that $N^{+}(v_{7}) = \lbrace v_{1},v_{5} \rbrace$. $H_{30} := L_{2} + v_{7}$, such that $N^{+}(v_{7}) = \lbrace v_{3},v_{6} \rbrace$. $H_{31} := L_{2} + v_{7}$, such that $N^{+}(v_{7}) = \lbrace v_{5},v_{6} \rbrace$. $H_{32} := L_{2} + v_{7}$, such that $N^{+}(v_{7}) = \lbrace v_{1},v_{5} \rbrace$. $H_{33} := L_{2} + v_{7}$, such that $N^{+}(v_{7}) = \lbrace v_{2},v_{5} \rbrace$. $H_{34} := L_{2} + v_{7}$, such that $N^{+}(v_{7}) = \lbrace v_{3},v_{4} \rbrace$. $H_{35} := L_{2} + v_{7}$, such that $N^{-}(v_{7}) = \lbrace v_{4},v_{6} \rbrace$. $H_{36} := L_{2} + v_{7}$, such that $N^{-}(v_{7}) = \lbrace v_{1},v_{6} \rbrace$. $H_{37} := L_{1} + v_{7}$, such that $N^{+}(v_{7}) = \lbrace v_{3},v_{4},v_{5} \rbrace$. $H_{38} := L_{1} + v_{7}$, such that $N^{+}(v_{7}) = \lbrace v_{3},v_{4} \rbrace$. $H_{39} := L_{2} + v_{7}$, such that $N^{+}(v_{7}) = \lbrace v_{1},v_{2},v_{4} \rbrace$. $H_{40} := L_{1} + v_{7}$, such that $N^{-}(v_{7}) = \lbrace v_{4},v_{6} \rbrace$. $H_{41} := L_{2} + v_{7}$, such that $N^{-}(v_{7}) = \lbrace v_{2},v_{5} \rbrace$. $H_{42} := L_{1} + v_{7}$, such that $N^{-}(v_{7}) = \lbrace v_{1},v_{6} \rbrace$. $H_{43} := L_{2} + v_{7}$, such that $N^{-}(v_{7}) = \lbrace v_{1},v_{3} \rbrace$. $H_{44} := L_{2} + v_{7}$, such that $N^{-}(v_{7}) = \lbrace v_{2},v_{6} \rbrace$. $H_{45} := L_{1} + v_{7}$, such that $N^{-}(v_{7}) = \lbrace v_{2},v_{6} \rbrace$. $H_{46} := L_{2} + v_{7}$, such that $N^{+}(v_{7}) = \lbrace v_{2},v_{3},v_{5} \rbrace$. $H_{47} := L_{1} + v_{7}$, such that $N^{-}(v_{7}) = \lbrace v_{3},v_{6} \rbrace$. $H_{48} := L_{2} + v_{7}$, such that $N^{+}(v_{7}) = \lbrace v_{1},v_{3} \rbrace$. $H_{49} := L_{2} + v_{7}$, such that $N^{+}(v_{7}) = \lbrace v_{2},v_{3} \rbrace$. $H_{50} := L_{1} + v_{7}$, such that $N^{+}(v_{7}) = \lbrace v_{2},v_{3} \rbrace$. $H_{51} := L_{1} + v_{7}$, such that $N^{+}(v_{7}) = \lbrace v_{2},v_{4} \rbrace$. $H_{52} := L_{1} + v_{7}$, such that $N^{+}(v_{7}) = \lbrace v_{1},v_{3} \rbrace$. $H_{53} := L_{2} + v_{7}$, such that $N^{+}(v_{7}) = \lbrace v_{1},v_{2},v_{3} \rbrace$. $H_{54} := L_{1} + v_{7}$, such that $N^{-}(v_{7}) = \lbrace v_{5},v_{6} \rbrace$. $H_{55} := L_{2} + v_{7}$, such that $N^{-}(v_{7}) = \lbrace v_{3},v_{5} \rbrace$. $H_{56} := L_{1} + v_{7}$, such that $N^{+}(v_{7}) = \lbrace v_{1},v_{2},v_{3} \rbrace$. $H_{57} := L_{2} + v_{7}$, such that $N^{-}(v_{7}) = \lbrace v_{3},v_{6} \rbrace$. $H_{58} := L_{2} + v_{7}$, such that $N^{+}(v_{7}) = \lbrace v_{1},v_{2},v_{6} \rbrace$. $H_{59} := L_{2} + v_{7}$, such that $N^{+}(v_{7}) = \lbrace v_{2},v_{4} \rbrace$.
\vspace{1.5mm}\\
We are ready to define $\mathcal{S}$, $\mathcal{H}$, and $\mathcal{R}$.\\
$\cal S$ :$= \lbrace S_{i}: i = 1,...,15 \rbrace$, $\cal R$ $:= \lbrace R_{i}: i = 1,...,11 \rbrace$, and $\cal H$ $:= \lbrace H_{i}: i = 1,...,59 \rbrace$.\\
Note that their are some tournaments in tha above classes that are isomorphic to their complement. To this end let us define the following classes, denoted by $\mathbf{S}$, $\mathbf{H}$, $\mathbf{R}$.\\
$\mathbf{S}$ $:= \lbrace S_{i},S_{i}^{c}: i = 1,...,15 \rbrace$, $\mathbf{R}$ $:= \lbrace R_{i},R_{i}^{c}: i = 1,...,11 \rbrace$, and $\mathbf{H}$ $:= \lbrace H_{i},H_{i}^{c}: i = 1,...,59 \rbrace$.\vspace{1.5mm}\\
The following theorem describes the structure of all $7$-vertex tournament:
\begin{theorem} \label{r}
Let $H$ be a seven-vertex tournament. Then one of the following holds:\\
$ 1.$ $ H $ is a galaxy.\\
$ 2.$ $ H$ is a constellation and not a galaxy.\\
$ 3.$ $ H$ belongs to class $\mathbf{H}$.\\
$ 4.$ $ H$ belongs to class $\mathbf{R}$.\\
$ 5.$ $ H$ is not prime $K_{6}-$free tournament. \\
$ 6.$ $ H$ belongs to class $\mathbf{S}$. 
\end{theorem}
The proof of Theorem \ref{r} is presented in Section \ref{landscape}.\\
Theorem \ref{r} and the hereditary property of the Erd\"{o}s-Hajnal conjecture implies the following theorem that reduces the question for the correctness of the conjecture for seven-vertex tournaments to the tournaments in classes $\cal S$, $\cal R$, $\cal H$ and which states:
\begin{theorem}
If $H$ is a seven$ - $vertex tournament not in class $\mathbf{H}$, class $\mathbf{R}$, or class $\mathbf{S}$ then $H$ satisfies the EHC.
\end{theorem}
\begin{proof} We will use Theorem \ref{r}. If (1) holds, then the result follows from Theorem \ref{b}. If (2) holds, then the result follows from Theorem \ref{c}. If (5) holds, then the result follows from Theorem \ref{d} and Theorem \ref{o}. If (3) holds, then $H \in$ $\mathbf{H}$. If (4) holds, then $H \in$ $\mathbf{R}$, and finally if (6) holds, then $H \in$ $\mathbf{S}$.  $\hfill {\square}$ \vspace{1.5mm}\\
\end{proof}
\section{The structure of $H$-free $\epsilon$-critical tournaments}
Let $H$ be a tournament. In this section  we prove  structural properties of $\epsilon$-critical tournaments and $H$-free $\epsilon$-critical tournaments, and we borrow from \cite{polll,bnmm} some properties of $\epsilon$-critical tournaments.\\
Denote by $tr(T)$ the largest size of a transitive subtournament of a tournament $T$. For $X \subseteq V(T)$, write $tr(X)$ for $tr(T$$\mid$$X)$. Let $X, Y \subseteq V(T)$ be disjoint. Denote by $e_{X,Y}$ the number of directed arcs $(x,y)$, where $x \in X$ and $y \in Y$. The \textit{directed density from $X$ to} $Y$ is defined as $d(X,Y) = \frac{e_{X,Y}}{\mid X \mid.\mid Y \mid} $. We call $T$ $ \epsilon - $\textit{critical} for $ \epsilon > 0 $ if $tr(T) < $ $ \mid $$T$$ \mid^{\epsilon} $ but for every proper subtournament $S$ of $T$ we have: $tr(S) \geq $ $ \mid $$S$$ \mid^{\epsilon}. $
\begin{lemma} \cite{polll} \label{e} For every $N$ $ > 0 $, there exists $ \epsilon(N) > 0 $ such that for every $ 0 < \epsilon < \epsilon(N)$ every $ \epsilon $$-$critical tournament $T$ satisfies $ \mid $$T$$\mid$ $ \geq N$.
\end{lemma}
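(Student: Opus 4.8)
The plan is to exploit that the $\epsilon$-criticality hypothesis becomes more and more restrictive as $\epsilon\to 0$: it forces $tr(T)<|T|^{\epsilon}$, and the right-hand side tends to $1$, whereas every tournament on two or more vertices already contains a transitive subtournament of size $2$. So for small $\epsilon$ an $\epsilon$-critical tournament simply cannot be small, and the proof will just be a one-line quantitative version of this observation. I would argue by a direct estimate rather than by contradiction.

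The steps I would carry out are as follows. The case $0<N\le 1$ is trivial, since $|T|\ge 1$ for every tournament, so any positive $\epsilon(N)$ works; hence I assume $N>1$ and set $\epsilon(N):=1/\log_2 N>0$. Now fix $\epsilon$ with $0<\epsilon<\epsilon(N)$ and let $T$ be $\epsilon$-critical with $n:=|T|$. First I note that $n\ge 2$: a one-vertex tournament has $tr(T)=1$ and thus violates the defining inequality $tr(T)<|T|^{\epsilon}=1$. Since $n\ge 2$, Lemma~\ref{h} (or simply the fact that any two vertices span a transitive subtournament of size $2$) gives $tr(T)\ge 2$, while $\epsilon$-criticality gives $tr(T)<n^{\epsilon}$. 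Combining these, $n^{\epsilon}>2$, i.e. $\log_2 n>1/\epsilon>\log_2 N$, so $n>N$; in particular $|T|\ge N$, which is exactly the conclusion.

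I do not expect a genuine obstacle here: the whole argument rests only on the elementary remark above plus Lemma~\ref{h} in its weakest instance. The one place that calls for a little care is the degenerate range of parameters — the one-vertex tournament, and values $N\le 1$, both dealt with above — together with checking that the declared threshold $\epsilon(N)=1/\log_2 N$ is positive and finite, which is precisely why I first reduce to the case $N>1$.
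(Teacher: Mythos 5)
Your argument is correct and is essentially the paper's own proof: the paper likewise observes that every tournament (on at least two vertices) contains a transitive subtournament of size $2$ and takes the same threshold $\epsilon(N)=\log_{N}(2)=1/\log_{2}N$. Your write-up merely adds the routine checks for $N\le 1$ and the one-vertex case, which the paper leaves implicit.
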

\begin{proof}
Since every tournament contains a transitive subtournament of size $2$ so it suffices to take $\epsilon(N) = log_{N}(2)$. $\hfill {\square}$
\end{proof}
\begin{lemma} \cite{polll} \label{f} Let $T$ be an $ \epsilon $$-$critical tournament with $\mid$$T$$\mid$ $ =n$ and $\epsilon$,$c,f > 0 $ be constants such that $ \epsilon < log_{c}(1 - f)$. Then for every $A \subseteq V(T)$ with $ \mid $$A$$ \mid$ $ \geq cn$ and every transitive subtournament $G$ of $T$ with $ \mid $$G$$ \mid$ $\geq f.tr(T)$ and $V(G) \cap A = \phi$, we have: $A$ is not complete from $V(G)$ and $A$ is not complete to $V(G)$.
\end{lemma}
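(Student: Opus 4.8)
The plan is to argue by contradiction. Suppose that $V(G)$ is complete to $A$ (that is, $A$ is complete from $V(G)$); the case in which $A$ is complete to $V(G)$ is entirely symmetric, obtained by interchanging the roles of ``to'' and ``from'' in what follows. The idea is to glue $G$ onto a maximum transitive subtournament of $T\mid A$ and thereby exhibit a transitive subtournament of $T$ that is too large to be compatible with $\epsilon$-criticality.

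First I would unpack the hypotheses. For $\log_c(1-f)$ to be defined and positive one needs $0<c<1$ and $0<f<1$; then, since the map $x\mapsto c^{x}$ is \emph{decreasing} for $c<1$, the assumption $\epsilon<\log_c(1-f)$ is equivalent to $c^{\epsilon}>1-f$, i.e.\ to $\frac{c^{\epsilon}}{1-f}>1$. This is the only place the logarithmic hypothesis enters, and the one point requiring a little care is getting this inequality the right way round because the base $c$ is smaller than $1$.

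Next, since $V(G)$ is nonempty and disjoint from $A$, the set $A$ is a proper subset of $V(T)$, so applying $\epsilon$-criticality of $T$ to the proper subtournament $T\mid A$ gives $tr(T\mid A)\geq |A|^{\epsilon}\geq (cn)^{\epsilon}=c^{\epsilon}n^{\epsilon}$. Fix a transitive subtournament $T_A$ of $T\mid A$ with $|V(T_A)|=tr(T\mid A)$. Since $V(T_A)\subseteq A$ is disjoint from $V(G)$ and $V(G)$ is complete to $A$, a transitive ordering of $G$ followed by a transitive ordering of $T_A$ is a transitive ordering of $T\mid(V(G)\cup V(T_A))$: every arc inside $G$, every arc inside $T_A$, and every arc from $V(G)$ to $V(T_A)$ points forward. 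Hence $T$ has a transitive subtournament of size $|V(G)|+|V(T_A)|\geq f\cdot tr(T)+c^{\epsilon}n^{\epsilon}$.

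Finally I would combine the estimates: $tr(T)\geq |V(G)|+|V(T_A)|\geq f\cdot tr(T)+c^{\epsilon}n^{\epsilon}$, whence $(1-f)\,tr(T)\geq c^{\epsilon}n^{\epsilon}$ and therefore $tr(T)\geq \frac{c^{\epsilon}}{1-f}\,n^{\epsilon}>n^{\epsilon}=|T|^{\epsilon}$, contradicting the inequality $tr(T)<|T|^{\epsilon}$ that holds because $T$ is $\epsilon$-critical. The mirror-image argument, with $T_A$ placed before $G$, settles the remaining case. I do not anticipate a real obstacle here: the argument is a short counting estimate, and its only delicate points are the monotonicity direction of $\log_c$ when $c<1$ and the routine observation that a one-way-complete pair of transitive tournaments concatenates into a transitive tournament.
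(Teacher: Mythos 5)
Your proof is correct and is essentially the standard argument from the cited source (the paper itself states Lemma \ref{f} with a reference to \cite{polll} and gives no proof): apply $\epsilon$-criticality to the proper subtournament $T\mid A$, concatenate $G$ with a maximum transitive subtournament of $T\mid A$ using the one-way completeness, and use that $\epsilon < log_{c}(1-f)$ with $0<c<1$ gives $c^{\epsilon}>1-f$, forcing $tr(T)>n^{\epsilon}$, a contradiction. Your handling of the delicate points (monotonicity of $log_{c}$ for $c<1$, nonemptiness of $G$ so that $T\mid A$ is proper, and the symmetric second case) is exactly what is needed.
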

\begin{lemma} \cite{polll} \label{v} Let $T$ be an $ \epsilon $$-$critical tournament with $\mid$$T$$\mid = n$ and $\epsilon$,$c > 0 $ be constants such that $ \epsilon < log_{\frac{c}{2}}(\frac{1}{2}). $ Then for every two disjoint subsets $X, Y \subseteq V(T)$ with $ \mid $$X$$ \mid$ $ \geq cn, \mid $$Y$$ \mid$ $ \geq cn $ there exist an integer $k \geq \frac{cn}{2} $ and vertices $ x_{1},...,x_{k} \in X $ and $ y_{1},...,y_{k} \in Y $ such that $ y_{i} $ is adjacent to $ x_{i} $ for $i = 1,...,k. $
\end{lemma}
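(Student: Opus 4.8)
The plan is to argue by contradiction through the standard dichotomy ``large matching or large complete bipartite pattern''. Consider the bipartite graph $B$ with parts $X$ and $Y$ in which $x\in X$ and $y\in Y$ are joined by an edge exactly when $(y,x)\in E(T)$; a matching of size $k$ in $B$ is literally a choice of vertices $x_{1},\dots,x_{k}\in X$ and $y_{1},\dots,y_{k}\in Y$ with $y_{i}$ adjacent to $x_{i}$, so it suffices to produce a matching of $B$ of size at least $cn/2$. I would pick a maximal matching $M=\{x_{1}y_{1},\dots,x_{k}y_{k}\}$ of $B$ and suppose, for contradiction, that $k<\frac{cn}{2}$.

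Set $X'=X\setminus\{x_{1},\dots,x_{k}\}$ and $Y'=Y\setminus\{y_{1},\dots,y_{k}\}$. By maximality of $M$ there is no edge of $B$ between $X'$ and $Y'$ (otherwise it could be added to $M$), which means that for every $x\in X'$ and every $y\in Y'$ we have $(x,y)\in E(T)$; that is, $X'$ is complete to $Y'$. Moreover $|X'|=|X|-k\ge cn-k>\frac{cn}{2}$ and likewise $|Y'|>\frac{cn}{2}$, so both sets are nonempty; since they are disjoint, $T|X'$ and $T|Y'$ are proper subtournaments of $T$.

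Now I would invoke $\epsilon$-criticality: $tr(X')\ge|X'|^{\epsilon}>\left(\frac{cn}{2}\right)^{\epsilon}$ and $tr(Y')>\left(\frac{cn}{2}\right)^{\epsilon}$. Taking a largest transitive subtournament of $T|X'$ and placing it before a largest transitive subtournament of $T|Y'$ yields, since $X'$ is complete to $Y'$, a transitive subtournament of $T$ of size $tr(X')+tr(Y')$; hence $tr(T)\ge tr(X')+tr(Y')>2\left(\frac{c}{2}\right)^{\epsilon}n^{\epsilon}$. Because $X,Y$ are disjoint subsets of $V(T)$ we have $c\le\frac12$, so $0<\frac{c}{2}<1$ and $\log_{\frac{c}{2}}\!\left(\frac12\right)>0$; the hypothesis $\epsilon<\log_{\frac{c}{2}}\!\left(\frac12\right)$ rearranges, on multiplying by the negative quantity $\ln\frac{c}{2}$ (which reverses the inequality), to $\left(\frac{c}{2}\right)^{\epsilon}>\frac12$, i.e. $2\left(\frac{c}{2}\right)^{\epsilon}>1$. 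Therefore $tr(T)>n^{\epsilon}$, contradicting $tr(T)<|T|^{\epsilon}=n^{\epsilon}$. Hence $k\ge\frac{cn}{2}$, and the matching $M$ furnishes the required vertices.

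The only genuine content here is the observation that an $\epsilon$-critical tournament cannot contain two sets of linear size, one complete to the other — everything else is bookkeeping, and no appeal to Lemmas \ref{h}, \ref{f} or \ref{e} is needed, only the definition of $\epsilon$-criticality together with the elementary fact that the concatenation of transitive subtournaments along a ``complete to'' relation is again transitive. The step I expect to be most error-prone is the manipulation of the logarithmic condition: since the base $c/2$ is below $1$, one must keep track of the sign of $\ln(c/2)$ when converting $\epsilon<\log_{c/2}(1/2)$ into $2(c/2)^{\epsilon}>1$.
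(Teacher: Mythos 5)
Your proof is correct: the maximal-matching dichotomy (either a matching of size $\frac{cn}{2}$ in the backward bipartite graph, or two leftover sets of size $>\frac{cn}{2}$ with one complete to the other, which by $\epsilon$-criticality and concatenation of transitive subtournaments forces $tr(T)>2\left(\frac{c}{2}\right)^{\epsilon}n^{\epsilon}>n^{\epsilon}$) is sound, and the handling of the base-less-than-one logarithm is right. The paper itself gives no proof of this lemma (it is quoted from \cite{polll}), and your argument is essentially the standard greedy/maximal-matching proof from that reference, so there is nothing further to reconcile.
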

\begin{lemma} \cite{polll} \label{s} Let $A_{1},A_{2}$ be two disjoint sets such that $d(A_{1},A_{2}) \geq 1-\lambda$ and let $0 < \eta_{1},\eta_{2} \leq 1$. Let $\widehat{\lambda} = \frac{\lambda}{\eta_{1}\eta_{2}}$. Let $X \subseteq A_{1}, Y \subseteq A_{2}$ be such that $\mid$$X$$\mid$ $\geq \eta_{1} \mid$$A_{1}$$\mid$ and $\mid$$Y$$\mid$ $\geq \eta_{2} \mid$$A_{2}$$\mid$. Then $d(X,Y) \geq 1-\widehat{\lambda}$. 
\end{lemma}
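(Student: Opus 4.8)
The plan is to reduce everything to counting the arcs that point ``the wrong way'' and to observe that passing to subsets can only decrease this count while the normalizing denominator shrinks in a controlled way. Concretely, I would first rewrite the hypothesis $d(A_{1},A_{2}) \geq 1-\lambda$ in the equivalent form $e_{A_{1},A_{2}} \geq (1-\lambda)\mid A_{1}\mid \mid A_{2}\mid$, and then use the fact that between two disjoint vertex sets every ordered pair contributes exactly one arc, so $e_{A_{2},A_{1}} = \mid A_{1}\mid \mid A_{2}\mid - e_{A_{1},A_{2}} \leq \lambda \mid A_{1}\mid \mid A_{2}\mid$. This is the only place the density hypothesis is used.

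Next I would exploit monotonicity of the arc-count under inclusion. Since $X \subseteq A_{1}$ and $Y \subseteq A_{2}$, every arc counted by $e_{Y,X}$ is also counted by $e_{A_{2},A_{1}}$, hence $e_{Y,X} \leq e_{A_{2},A_{1}} \leq \lambda \mid A_{1}\mid \mid A_{2}\mid$. Now write $d(X,Y) = \frac{e_{X,Y}}{\mid X\mid \mid Y\mid} = 1 - \frac{e_{Y,X}}{\mid X\mid \mid Y\mid}$, and bound $\frac{e_{Y,X}}{\mid X\mid \mid Y\mid} \leq \frac{\lambda \mid A_{1}\mid \mid A_{2}\mid}{\mid X\mid \mid Y\mid}$. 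Finally, plug in the size bounds $\mid X\mid \geq \eta_{1}\mid A_{1}\mid$ and $\mid Y\mid \geq \eta_{2}\mid A_{2}\mid$ (which are legitimate denominators since $\eta_{1},\eta_{2} > 0$ force $\mid A_{1}\mid,\mid A_{2}\mid > 0$ whenever $X,Y$ are nonempty), giving $\frac{e_{Y,X}}{\mid X\mid \mid Y\mid} \leq \frac{\lambda}{\eta_{1}\eta_{2}} = \widehat{\lambda}$, and therefore $d(X,Y) \geq 1-\widehat{\lambda}$.

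I do not anticipate any real obstacle here: the statement is a routine averaging/counting estimate, and the only mild point of care is the bookkeeping identity $e_{X,Y}+e_{Y,X} = \mid X\mid \mid Y\mid$ for disjoint $X,Y$ in a tournament, together with checking that the degenerate cases (one of the sets empty, or $\lambda$ large enough that $1-\widehat{\lambda}$ becomes negative) are handled trivially — in the latter case the conclusion holds vacuously since densities are always nonnegative. If one wanted to be slightly slicker, the whole argument is an instance applying twice (once on each side) the elementary fact that restricting a set to an $\eta$-fraction multiplies the relevant density's ``defect'' by at most $1/\eta$, but I would simply present the direct two-line computation above.
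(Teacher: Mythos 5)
Your proof is correct. Note that the paper itself states this lemma as a citation to \cite{polll} and gives no proof, so there is nothing internal to compare against; your argument — passing to the reversed-arc count via $e_{X,Y}+e_{Y,X}=\mid X\mid\mid Y\mid$ for disjoint sets in a tournament, bounding $e_{Y,X}\leq e_{A_{2},A_{1}}\leq \lambda\mid A_{1}\mid\mid A_{2}\mid$ by inclusion, and dividing by $\mid X\mid\mid Y\mid\geq \eta_{1}\eta_{2}\mid A_{1}\mid\mid A_{2}\mid$ — is exactly the standard counting proof of this fact, and your remarks on the degenerate cases are the only care needed.
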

\begin{lemma} \cite{ml} \label{h}
Every tournament on $2^{k-1}$ vertices contains a transitive subtournament of size at least $k$.
 \end{lemma}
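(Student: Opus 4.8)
The plan is to argue by induction on $k$. The base case $k=1$ is immediate: a tournament on $2^{0}=1$ vertex is itself transitive of size $1$. For the inductive step, suppose the claim holds for $k-1$, and let $T$ be a tournament on $2^{k-1}$ vertices with $k\geq 2$.

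First I would pick an arbitrary vertex $v\in V(T)$. Since $V(T)\setminus\{v\}$ is partitioned into $N^{+}(v)$ and $N^{-}(v)$, and $|V(T)\setminus\{v\}| = 2^{k-1}-1$, at least one of these two sets has size at least $\lceil (2^{k-1}-1)/2\rceil = 2^{k-2}$. Call this set $X$; note $|X|\geq 2^{(k-1)-1}$.

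Next, apply the induction hypothesis to the subtournament $T\mid X$ (discarding extra vertices so that $|X| = 2^{k-2}$ exactly, if desired): it contains a transitive subtournament on a vertex set $Y\subseteq X$ with $|Y|\geq k-1$. Finally, observe that $v$ is complete to $X$ (if $X=N^{+}(v)$) or complete from $X$ (if $X=N^{-}(v)$), hence $v$ is complete to or complete from $Y$; in either case $T\mid(Y\cup\{v\})$ is transitive — one simply prepends or appends $v$ to the transitive order on $Y$ — and has size at least $k$. This completes the induction.

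I do not anticipate a serious obstacle here; the only point requiring a moment's care is the ceiling estimate $\lceil(2^{k-1}-1)/2\rceil = 2^{k-2}$, which is where the power-of-two hypothesis is used to avoid losing more than one vertex per step, and the verification that adjoining $v$ to a transitive subtournament of $N^{+}(v)$ or $N^{-}(v)$ indeed preserves transitivity (no new directed cycle can be created since $v$ is a source or a sink of the enlarged set).
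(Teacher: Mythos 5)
Your proof is correct. Note that the paper itself gives no proof of this lemma: it is quoted from the literature (Stearns, \emph{The voting problem}), so there is nothing internal to compare against. Your induction is exactly the classical argument for this result: split $V(T)\setminus\{v\}$ into $N^{+}(v)$ and $N^{-}(v)$, observe that one part has at least $\lceil(2^{k-1}-1)/2\rceil = 2^{k-2}$ vertices, apply the induction hypothesis there, and adjoin $v$ as a source or sink of the resulting transitive set. All the steps you flag as needing care (the ceiling computation and the fact that adding a source or sink to a transitive tournament keeps it transitive) are handled correctly, so the argument is complete.
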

The following is introduced in \cite{bnmm}.\\
Let $ c > 0, 0 < \lambda < 1 $ be constants, and let $w$ be a $ \lbrace 0,1 \rbrace - $ vector of length $ \mid $$w$$ \mid $. Let $T$ be a tournament with $ \mid $$T$$ \mid$ $ = n. $ A sequence of disjoint subsets $ \chi = (A_{1}, A_{2},..., A_{\mid w \mid}) $ of $V(T)$ is a smooth $ (c,\lambda, w)- $structure if:\\
$\bullet$ whenever $ w_{i} = 0 $ we have $ \mid $$A_{i}$$ \mid$ $ \geq cn $ (we say that $ A_{i} $ is a \textit{linear set}).\\
$\bullet$ whenever $ w_{i} = 1 $ the tournament $T$$\mid$$ A_{i} $ is transitive and $ \mid $$A_{i}$$ \mid$ $ \geq c.tr(T) $ (we say that $ A_{i} $ is a \textit{transitive set}).\\
$\bullet$ $ d(\lbrace v \rbrace, A_{j}) \geq 1 - \lambda $ for $v \in A_{i} $ and $ d(A_{i}, \lbrace v \rbrace) \geq 1 - \lambda $ for $v \in A_{j}, i < j $ (we say that $\chi$ is \textit{smooth}).
\begin{theorem} \cite{bnmm} \label{t}
Let $S$ be a tournament, let $w$ be a $ \lbrace 0,1 \rbrace - $vector, and let $ 0 < \lambda_{0} < \frac{1}{2} $ be a constant. Then there exist $ \epsilon_{0}, c_{0} > 0 $ such that for every $ 0 < \epsilon < \epsilon_{0} $, every $ S- $free $ \epsilon $$- $critical tournament contains a smooth $ (c_{0}, \lambda_{0},w)- $structure.
\end{theorem}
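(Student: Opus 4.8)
The plan is to construct the structure in two stages: first a \emph{coarse} version, in which the density conditions are required only between the sets, and then a cleaning step that upgrades it to a genuine smooth $(c_0,\lambda_0,w)$-structure.

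In the first stage I would produce pairwise disjoint $B_1,\dots,B_{|w|}\subseteq V(T)$ such that $B_i$ is a linear set (of size $\ge c_1n$) whenever $w_i=0$, $B_i$ is a transitive set (of size $\ge c_1\,tr(T)$) whenever $w_i=1$, and $d(B_i,B_j)\ge 1-\lambda_1$ for all $i<j$, with $c_1,\lambda_1>0$ depending only on $w$ and $\lambda_0$. The second stage is then routine. Fixing $i<j$: since $d(B_i,B_j)\ge 1-\lambda_1$, an averaging (Markov) argument shows that all but at most a $\sqrt{\lambda_1}$-fraction of the vertices $v\in B_i$ satisfy $d(\{v\},B_j)\ge 1-\sqrt{\lambda_1}$, and symmetrically all but a $\sqrt{\lambda_1}$-fraction of $v\in B_j$ satisfy $d(B_i,\{v\})\ge 1-\sqrt{\lambda_1}$. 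Deleting from every $B_i$ the vertices that are bad for some pair removes at most a $|w|\sqrt{\lambda_1}$-fraction of $B_i$, so for $\lambda_1$ small enough (in terms of $|w|$ and $\lambda_0$) at least half of each $B_i$ survives; the surviving sets $S_i$ are still linear, resp.\ transitive, with $c_0=c_1/2$, and because each $S_j$ retains at least half of $B_j$ the surviving per-vertex densities are at least $1-|w|\sqrt{\lambda_1}\ge 1-\lambda_0$ after a final adjustment of $\lambda_1$.

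The substance is the first stage, which I would carry out by induction on $|w|$. For $|w|=1$ take $B_1=V(T)$ if $w_1=0$, and $B_1$ the vertex set of a maximum transitive subtournament if $w_1=1$. For the inductive step, delete the last coordinate of $w$, obtain by induction a coarse structure for the shorter vector, and then adjoin one more set $B_{|w|}$ of the prescribed type that is almost-complete from each of the previously constructed sets; to make room for it one is allowed to replace the already-built $B_i$'s by linear-sized subsets of themselves, the resulting loss in the pairwise densities being controlled by Lemma~\ref{s} and then compensated by shrinking $\lambda_1$. The real issue is producing the new set inside the huge $\epsilon$-critical tournament $T$. I would use a maximum transitive subtournament $G$ as a scaffold: list $G$ in its transitive order, split $V(T)\setminus G$ according to a suitable discretization of how its vertices attach to $G$, apply pigeonhole to get a linear set whose vertices all attach to $G$ in essentially the same way, use the maximality of $G$ to restrict which attachment patterns can occur on a linear set, and then convert this into the required forward density via Lemmas~\ref{f} and~\ref{v}; when the new set must itself be transitive, carve it directly out of $G$, using that consecutive blocks of a transitive order are complete to one another. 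Lemma~\ref{e} guarantees that $n$ is large enough for all the pigeonholing.

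The main obstacle is exactly this extension step, for two reasons. First, a subtournament of an $\epsilon$-critical tournament need not be $\epsilon$-critical, so one cannot simply recurse ``inside'' $B_1\cup\cdots\cup B_{|w|-1}$: every cardinality must be measured against the \emph{absolute} thresholds $c\,n$ and $c\,tr(T)$, and every density bound has to be wrung out of Lemmas~\ref{h}, \ref{f}, \ref{v} and~\ref{s} applied to $T$ as a whole. Second, by Lemma~\ref{f} no linear set can be genuinely complete to, or from, a transitive set whose size is a fixed fraction of $tr(T)$; hence $\lambda=0$ is impossible and one has to verify at each step that the small slack permitted by $\lambda_1$ is simultaneously \emph{forced} and \emph{sufficient}. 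The hypothesis that $T$ is $S$-free is carried throughout --- it is what makes the resulting structure useful in the applications --- and would be invoked at any point in the construction where a particular piece fails to exist without it.
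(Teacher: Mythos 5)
First, a point of reference: the paper does not prove Theorem \ref{t} at all --- it is imported verbatim from \cite{bnmm} (building on \cite{polll}), so there is no in-paper proof to measure your proposal against; it has to stand on its own as a proof of the smooth-structure theorem.

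Judged that way, there is a genuine gap, and you have in effect flagged it yourself. Your second stage (Markov averaging to discard vertices with bad one-sided density, then Lemma \ref{s}-type bookkeeping to restore the constants) is indeed routine and correct up to constants. But all of the content of the theorem sits in your first stage, and that stage is a plan rather than an argument. The inductive extension step --- producing, inside the $\epsilon$-critical tournament $T$, one more set of the prescribed type (linear of size $\geq c\,n$, or transitive of size $\geq c\,tr(T)$) that has density at least $1-\lambda_1$ from \emph{each} of the previously built sets $B_1,\dots,B_{|w|-1}$ --- is exactly the hard part, and your sketch of it is not carried out: you do not specify the discretization of attachment patterns to the scaffold $G$, you do not show that a pigeonholed class with uniform attachment to $G$ has the required forward density to every earlier $B_i$ (uniform behaviour relative to $G$ says nothing direct about the linear sets $B_i$ living outside $G$, and Lemmas \ref{f} and \ref{v} only produce single arcs or non-complete pairs, not density $1-\lambda_1$), and you never make concrete where the $S$-freeness hypothesis enters --- ``invoked at any point where a particular piece fails to exist without it'' is an acknowledgement that the construction is unfinished, not a proof. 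There are also quantitative details left loose (each of the $|w|$ extension rounds shrinks the earlier sets, so the absolute thresholds $c\,n$ and $c\,tr(T)$ and the density loss must be tracked through Lemma \ref{s} with constants depending only on $|w|$ and $\lambda_0$), but those are manageable; the missing existence argument for the new almost-complete set is the essential hole, and it is precisely what the cited proof in \cite{bnmm}/\cite{polll} is devoted to.
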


\begin{corollary} \label{i}
Let $S$ be a tournament, let $w$ be a $ \lbrace 0,1 \rbrace - $vector, and let $m \in \mathbb{N}^{*}$, $ 0 < \lambda_{1} < \frac{1}{2} $ be constants. Then there exist $ \epsilon_{1}, c_{1} > 0 $ such that for every $ 0 < \epsilon < \epsilon_{1} $, every $ S- $free $ \epsilon $$- $critical tournament contains a smooth $ (c_{1}, \lambda_{1},w)- $structure $(F_{1},...,F_{\mid w \mid})$ such that $\forall 1 \leq j \leq$ $ \mid$$ w$$ \mid$, $\mid$$F_{j}$$\mid$ is divisible by $m$.
\end{corollary}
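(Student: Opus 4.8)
The plan is to deduce the corollary from Theorem \ref{t} by applying it with a slightly sharper density parameter, and then deleting from each set of the resulting structure a bounded number of vertices so as to make its cardinality a multiple of $m$; the only thing to check is that the three defining properties of a smooth structure survive this truncation. Concretely, I would first set $\lambda_0 = \lambda_1/2$ and apply Theorem \ref{t} to $S$, $w$, $\lambda_0$, obtaining $\epsilon_0, c_0 > 0$ so that every $S$-free $\epsilon$-critical tournament with $\epsilon < \epsilon_0$ contains a smooth $(c_0,\lambda_0,w)$-structure $(S_1,\dots,S_{|w|})$. Next, using Lemma \ref{h} (which gives $tr(T) \geq \log_2 |T|$, so $tr(T)\to\infty$ as $|T|\to\infty$), choose $N = N(m,c_0)$ large enough that $|T| = n \geq N$ forces both $c_0 n \geq 2m$ and $c_0\cdot tr(T) \geq 2m$; by Lemma \ref{e} there is $\epsilon(N) > 0$ such that every $\epsilon$-critical tournament with $\epsilon < \epsilon(N)$ has at least $N$ vertices. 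I would then set $\epsilon_1 = \min\{\epsilon_0,\epsilon(N)\}$ and $c_1 = c_0/2$.

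Given such a tournament and structure, for each $j$ let $S_j'$ be obtained from $S_j$ by deleting $|S_j| \bmod m$ arbitrary vertices, so $|S_j'|$ is divisible by $m$ and $|S_j| - m < |S_j'| \leq |S_j|$; the sets $S_j'$ are still pairwise disjoint, and their cardinalities are divisible by $m$ by construction. I claim $(S_1',\dots,S_{|w|}')$ is the desired smooth $(c_1,\lambda_1,w)$-structure. If $w_i = 0$ then $|S_i'| > c_0 n - m \geq c_0 n/2 = c_1 n$. If $w_i = 1$ then $T\mid S_i'$ is transitive, being a subtournament of the transitive tournament $T\mid S_i$, and $|S_i'| > c_0\cdot tr(T) - m \geq c_1\cdot tr(T)$. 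For smoothness, fix $i < j$ and $v \in S_i'$: since $d(\{v\},S_j) \geq 1-\lambda_0$, the vertex $v$ is non-adjacent to at most $\lambda_0|S_j|$ vertices of $S_j$, hence to at most $\lambda_0|S_j|$ vertices of $S_j'$, so
\[
d(\{v\},S_j') \;\geq\; 1 - \frac{\lambda_0|S_j|}{|S_j'|} \;\geq\; 1 - \frac{\lambda_0|S_j|}{|S_j| - m} \;\geq\; 1 - 2\lambda_0 \;=\; 1 - \lambda_1,
\]
where the last step uses $|S_j| \geq 2m$. The symmetric estimate $d(S_i',\{v\}) \geq 1-\lambda_1$ for $v\in S_j'$ is proved the same way (or directly from Lemma \ref{s}), which completes the verification.

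I do not expect a genuine obstacle: the argument is a routine truncation. The one point requiring attention is that the passage from $\lambda_0$ to $\lambda_1$ in the smoothness condition is only valid once each $S_j$ is known to be large relative to $m$, which forces one to first drive $n$ — and therefore $tr(T)$, via Lemma \ref{h} — past a threshold using Lemma \ref{e}; bookkeeping the dependence of $\epsilon_1$ and $c_1$ on $m$, $\lambda_1$, and the output of Theorem \ref{t} is then immediate. $\blacksquare$
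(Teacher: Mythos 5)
Your proposal is correct and follows essentially the same route as the paper: apply Theorem \ref{t} with $\lambda_{0}=\lambda_{1}/2$, use Lemmas \ref{h} and \ref{e} to force every set of the structure to have size at least $2m$, truncate each set to a multiple of $m$ (losing fewer than $m$ vertices), and take $c_{1}=c_{0}/2$. The only cosmetic difference is that you verify the degradation of the densities by a direct count, whereas the paper quotes Lemma \ref{s}; the two are the same estimate.
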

\begin{proof}
Let $\lambda_{0} = \frac{\lambda_{1}}{2}$. By Theorem \ref{t}, there exist $ \epsilon_{0}, c_{0} > 0 $ such that for every $ 0 < \epsilon < \epsilon_{0} $, every $ S- $free $ \epsilon $$- $critical tournament contains a smooth $ (c_{0}, \lambda_{0},w)- $structure. Denote this structure by $(A_{1},...,A_{\mid w \mid})$. Now $\forall 1 \leq j \leq$ $ \mid$$ w$$ \mid$ take an arbitrary subset $F_{j}$ of $A_{j}$ such that $\mid$$F_{j}$$\mid$ is divisible by $m$ and $\mid$$F_{j}$$\mid$ is of maximum size. Notice that $\forall 1 \leq j \leq$ $ \mid$$ w$$ \mid$, $\mid$$F_{j}$$\mid$ $\geq$ $ \mid$$A_{j}$$\mid -$ $ m$. Taking $\epsilon_{0}$ small enough we may assume that $tr(T) \geq \frac{2m}{c_{0}}$ by Lemmas \ref{h} and \ref{e}.\\
$\bullet$ $\forall 1 \leq j \leq$ $ \mid$$ w$$ \mid$ with $w(j) = 1$ we have: $\mid$$A_{j}$$\mid$ $ \geq c_{0}tr(T) \geq c_{0}\frac{2m}{c_{0}} = 2m$. Then $\mid$$F_{j}$$\mid$ $\geq$ $ \mid$$A_{j}$$\mid -$ $ m \geq$ $ \mid$$A_{j}$$\mid - $ $\frac{\mid A_{j}\mid}{2} = \frac{\mid A_{j}\mid}{2}$.\\
$\bullet$ $\forall 1 \leq j \leq$ $ \mid$$ w$$ \mid$ with $w(j) = 0$ we have: $\mid$$A_{j}$$\mid$ $ \geq c_{0}n \geq c_{0}tr(T) \geq 2m$. Then $\mid$$F_{j}$$\mid$ $\geq$ $ \frac{\mid A_{j}\mid}{2}$.\\
Now Lemma \ref{s} implies that $(F_{1},...,F_{\mid w \mid})$ is a smooth $(\frac{c_{0}}{2}, 2\lambda_{0},w)-$structure of $T$. Thus taking $c_{1} = \frac{c_{0}}{2}$, we complete the proof. $\hfill {\square}$ \\ 
\end{proof}

Let $(A_{1},...,A_{\mid w \mid})$ be a smooth $(c,\lambda ,w)$$-$structure of a tournament $T$, let $i \in \lbrace 1,...,\mid$$w$$\mid \rbrace$, and let $v \in A_{i}$. For $j\in \lbrace 1,2,...,\mid$$w$$\mid \rbrace \backslash \lbrace i \rbrace$, denote by $A_{j,v}$ the set of the vertices of $A_{j}$ adjacent from $v$ for $j > i$ and adjacent to $v$ for $j<i$.
\begin{lemma} \label{u} Let $0<\lambda<1$, $0<\gamma \leq 1$ be constants and let $w$ be a $\lbrace 0,1 \rbrace$$-$vector. Let $(A_{1},...,A_{\mid w \mid})$ be a smooth $(c,\lambda ,w)$$-$structure of a tournament $T$ for some $c>0$. Let $j\in \lbrace 1,...,\mid$$w$$\mid \rbrace$. Let $A_{j}^{*}\subseteq A_{j}$ such that $\mid$$A_{j}^{*}$$\mid$ $\geq \gamma \mid$$A_{j}$$\mid$ and let $A= \lbrace x_{1},...,x_{k} \rbrace \subseteq \displaystyle{\bigcup_{i\neq j}A_{i}}$ for some positive integer $k$. Then $\mid$$\displaystyle{\bigcap_{x\in A}A^{*}_{j,x}}$$\mid$ $\geq (1-k\frac{\lambda}{\gamma})\mid$$A_{j}^{*}$$\mid$. In particular $\mid$$\displaystyle{\bigcap_{x\in A}A_{j,x}}$$\mid$ $\geq (1-k\lambda)\mid$$A_{j}$$\mid$.
\end{lemma}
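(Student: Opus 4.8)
The plan is a routine union-bound (inclusion–exclusion) argument resting entirely on the smoothness of the structure, combined with the observation that restricting attention to $S_j^{*}$ costs at most a factor $\gamma$. First I would bound, for each individual $x\in A$, the portion of $S_j$ lying outside $S_{j,x}$. Write $x\in S_i$ (note $i\neq j$). If $i<j$, smoothness gives $d(\{x\},S_j)\ge 1-\lambda$, so $x$ is adjacent to at least $(1-\lambda)|S_j|$ vertices of $S_j$; by the definition of $S_{j,x}$ these are exactly its elements, hence $|S_j\setminus S_{j,x}|\le \lambda|S_j|$. If $i>j$, smoothness gives $d(S_j,\{x\})\ge 1-\lambda$, so at least $(1-\lambda)|S_j|$ vertices of $S_j$ are adjacent to $x$, and again these are precisely $S_{j,x}$, so $|S_j\setminus S_{j,x}|\le \lambda|S_j|$. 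In both cases, since $S_j^{*}\setminus S^{*}_{j,x}=S_j^{*}\cap(S_j\setminus S_{j,x})\subseteq S_j\setminus S_{j,x}$, we obtain $|S_j^{*}\setminus S^{*}_{j,x}|\le \lambda|S_j|$.

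Next I would sum this estimate over the $k$ vertices of $A$ and use $|S_j^{*}|\ge\gamma|S_j|$, i.e. $|S_j|\le\gamma^{-1}|S_j^{*}|$:
\[
\Big|\,S_j^{*}\setminus\bigcap_{x\in A}S^{*}_{j,x}\,\Big| \;=\; \Big|\bigcup_{x\in A}\big(S_j^{*}\setminus S^{*}_{j,x}\big)\Big| \;\le\; \sum_{x\in A}\big|S_j^{*}\setminus S^{*}_{j,x}\big| \;\le\; k\lambda|S_j| \;\le\; \frac{k\lambda}{\gamma}\,|S_j^{*}|.
\]
Subtracting from $|S_j^{*}|$ then yields
\[
\Big|\bigcap_{x\in A}S^{*}_{j,x}\Big| \;\ge\; |S_j^{*}| - \frac{k\lambda}{\gamma}\,|S_j^{*}| \;=\; \Big(1-\frac{k\lambda}{\gamma}\Big)|S_j^{*}|,
\]
which is the asserted inequality. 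The ``in particular'' statement is just the special case $S_j^{*}=S_j$, $\gamma=1$.

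There is no real obstacle here; the statement is essentially bookkeeping. The only two points that deserve a moment's care are: (i) matching the two cases $i<j$ and $i>j$ in the definition of $S_{j,x}$ with the correct one of the two density inequalities in the definition of a smooth structure (the ``$d(\{v\},S_j)$'' clause when $x$ sits in an earlier block, the ``$d(S_i,\{v\})$'' clause when $x$ sits in a later block); and (ii) in the per-vertex estimate one must bound $|S_j^{*}\setminus S^{*}_{j,x}|$ by $\lambda|S_j|$ — the size of the bad set measured inside the \emph{whole} block $S_j$, not inside $S_j^{*}$ — and only convert to a multiple of $|S_j^{*}|$ at the very end, so that the factor $\gamma$ is not discarded prematurely.
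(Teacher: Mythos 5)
Your proof is correct: the case analysis matching $i<j$ and $i>j$ to the two smoothness clauses is exactly right, $S^{*}_{j,x}=S_{j}^{*}\cap S_{j,x}$ behaves as you use it, and the union bound with the conversion $|S_{j}|\le\gamma^{-1}|S_{j}^{*}|$ at the end gives the stated inequality. This is essentially the paper's argument — the paper phrases it as an induction on $k$ using the density-restriction lemma (Lemma \ref{s}) for the single-vertex step, which is just the same union bound unrolled, so no substantive difference.
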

\begin{proof}
The proof is by induction on $k$. without loss of generality assume that $x_{1} \in A_{i}$ and $j<i$. Since $\mid$$A_{j}^{*}$$\mid$ $\geq \gamma \mid$$A_{j}$$\mid$ then by Lemma \ref{s}, $d(A^{*}_{j},\lbrace x_{1}\rbrace) \geq 1-\frac{\lambda}{\gamma}$. So $1-\frac{\lambda}{\gamma} \leq d(A^{*}_{j},\lbrace x_{1}\rbrace) = \frac{\mid A^{*}_{j,x_{1}}\mid}{\mid A_{j}^{*}\mid}$. Then $\mid$$A^{*}_{j,x_{1}}$$\mid$ $\geq (1-\frac{\lambda}{\gamma})$$\mid$$A_{j}^{*}$$\mid$ and so true for $k=1$.
Suppose the statement is true for $k-1$.\\ $\mid$$\displaystyle{\bigcap_{x\in A}A^{*}_{j,x}}$$\mid$ $=\mid$$(\displaystyle{\bigcap_{x\in A\backslash \lbrace x_{1}\rbrace}A^{*}_{j,x}})\cap A^{*}_{j,x_{1}}$$\mid$ $= \mid$$\displaystyle{\bigcap_{x\in A\backslash \lbrace x_{1}\rbrace}A^{*}_{j,x}}$$\mid$ $+$ $\mid$$A^{*}_{j,x_{1}}$$\mid$ $- \mid$$(\displaystyle{\bigcap_{x\in A\backslash \lbrace x_{1}\rbrace}A^{*}_{j,x}})\cup A^{*}_{j,x_{1}}$$\mid$ $\geq (1-(k-1)\frac{\lambda}{\gamma})\mid$$A_{j}^{*}$$\mid$ $+$ $(1-\frac{\lambda}{\gamma})\mid$$A_{j}^{*}$$\mid$ $-$ $\mid$$A_{j}^{*}$$\mid$ $= (1-k\frac{\lambda}{\gamma})\mid$$A_{j}^{*}$$\mid$. $\hfill {\square}$       
\end{proof} 
\begin{lemma}\label{g}
Let $f,c,\epsilon > 0$ be constants, where $0 <  f,c < 1$ and $0 < \epsilon < min\lbrace log_{\frac{c}{2}}(1-f), log_{\frac{c}{4}}(\frac{1}{2})\rbrace$. Let $T$ be an $ \epsilon $$-$critical tournament with $\mid$$T$$\mid$ $ =n$, and   let $T_{1},T_{2}$ be two disjoint transitive subtournaments of $T$ with $ \mid $$T_{1}$$ \mid$ $\geq f.tr(T)$ and $ \mid $$T_{2}$$ \mid$ $\geq f.tr(T)$. Let $A_{1},A_{2}$ be two disjoint subsets of $V(T)$ with $ \mid $$A_{1}$$ \mid$ $\geq cn$, $ \mid $$A_{2}$$ \mid$ $\geq cn$, and $A_{1},A_{2} \subseteq V(T) \backslash (V(T_{1})\cup V(T_{2}))$. Then there exist vertices $a,x,s_{1},s_{2}$ such that $a\in A_{1}, x\in A_{2}, s_{1}\in T_{1}, s_{2}\in T_{2}$, $\lbrace a,s_{1}\rbrace \leftarrow x$, and $a\leftarrow s_{2}$.
Similarly, there exist vertices $d_{1},d_{2},u_{1},u_{2}$ such that $d_{1}\in A_{1}, d_{2}\in A_{2}, u_{1}\in T_{1}, u_{2}\in T_{2}$, $\lbrace d_{1},u_{1}\rbrace \leftarrow d_{2}$, and $u_{2}\leftarrow d_{1}$.
 \end{lemma}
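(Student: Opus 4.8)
\emph{Proof plan.} The plan is to combine Lemma~\ref{f} --- used to bound how many vertices of $A_1$ (resp. $A_2$) are oriented the ``wrong way'' relative to $S_2$ (resp. $S_1$) --- with Lemma~\ref{v} --- used to drop a single arc between the surviving vertices of $A_1$ and those of $A_2$.

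First I would trim the two linear sets. Let $A_1^{\circ}=\{a\in A_1:\ s\rightarrow a\ \text{for some}\ s\in V(S_2)\}$, let $A_1^{\bullet}=\{a\in A_1:\ a\rightarrow s\ \text{for some}\ s\in V(S_2)\}$, and let $A_2^{\circ}=\{x\in A_2:\ x\rightarrow s\ \text{for some}\ s\in V(S_1)\}$. I claim $\mid A_1^{\circ}\mid,\ \mid A_1^{\bullet}\mid,\ \mid A_2^{\circ}\mid$ are all greater than $\frac{c}{2}n$. Indeed, each vertex of $A_1\setminus A_1^{\circ}$ has no inneighbor in $S_2$, hence is adjacent to every vertex of $V(S_2)$, so $A_1\setminus A_1^{\circ}$ is complete to $V(S_2)$; since $S_2$ is transitive with $\mid S_2\mid\geq f\cdot tr(T)$ and $V(S_2)\cap A_1=\phi$, if $\mid A_1\setminus A_1^{\circ}\mid\geq\frac{c}{2}n$ then Lemma~\ref{f} applied with the constant $\frac{c}{2}$ in place of $c$ --- legitimate because $\epsilon<log_{c/2}(1-f)$ --- would force $A_1\setminus A_1^{\circ}$ to be \emph{not} complete to $V(S_2)$, a contradiction. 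Hence $\mid A_1^{\circ}\mid>\mid A_1\mid-\frac{c}{2}n\geq\frac{c}{2}n$. The same argument --- replacing ``complete to $V(S_2)$'' by ``complete from $V(S_2)$'' for $A_1\setminus A_1^{\bullet}$, and by ``complete from $V(S_1)$'' for $A_2\setminus A_2^{\circ}$ (using $S_1$ transitive, $\mid S_1\mid\geq f\cdot tr(T)$, $V(S_1)\cap A_2=\phi$) --- gives $\mid A_1^{\bullet}\mid>\frac{c}{2}n$ and $\mid A_2^{\circ}\mid>\frac{c}{2}n$.

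Next I would apply Lemma~\ref{v}. The sets $A_1^{\circ}$ and $A_2^{\circ}$ are disjoint (being subsets of the disjoint sets $A_1,A_2$) and each has size at least $\frac{c}{2}n$; since $\epsilon<log_{c/4}(\frac12)=log_{(c/2)/2}(\frac12)$, Lemma~\ref{v} applied with the constant $\frac{c}{2}$ yields in particular a vertex $x\in A_2^{\circ}$ and a vertex $a\in A_1^{\circ}$ with $x\rightarrow a$. Choosing $s_1\in S_1$ with $x\rightarrow s_1$ (which exists because $x\in A_2^{\circ}$) and $s_2\in S_2$ with $s_2\rightarrow a$ (which exists because $a\in A_1^{\circ}$), we obtain $\{a,s_1\}\leftarrow x$ and $a\leftarrow s_2$; this is the first assertion. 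For the ``similarly'' part I would run Lemma~\ref{v} once more, now on the disjoint linear sets $A_1^{\bullet}$ and $A_2^{\circ}$, obtaining $d_1\in A_1^{\bullet}$ and $d_2\in A_2^{\circ}$ with $d_2\rightarrow d_1$, and then pick $u_1\in S_1$ with $d_2\rightarrow u_1$ and $u_2\in S_2$ with $d_1\rightarrow u_2$, so that $\{d_1,u_1\}\leftarrow d_2$ and $u_2\leftarrow d_1$.

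I do not expect a genuine obstacle. The only step that needs care is applying Lemma~\ref{f} with the halved constant $\frac{c}{2}$ rather than $c$: this is precisely what the hypothesis $\epsilon<log_{c/2}(1-f)$ is tuned for, and it guarantees that the discarded ``wrong orientation'' part of each $A_i$ is sublinear, so that the survivors still have linear size $\frac{c}{2}n$ --- which in turn is what makes them admissible inputs to Lemma~\ref{v} (whose application here requires $\epsilon<log_{c/4}(\frac12)$).
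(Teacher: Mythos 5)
Your proposal is correct and follows essentially the same route as the paper: trim each $A_i$ via Lemma~\ref{f} (using the halved constant, which is exactly what $\epsilon<\log_{c/2}(1-f)$ permits) to the vertices with the required neighbor in $S_2$ resp.\ $S_1$, then obtain the needed arc between the surviving halves via Lemma~\ref{v} under $\epsilon<\log_{c/4}(\tfrac12)$. The only difference is cosmetic: the paper proves the first assertion and declares the second analogous, whereas you spell out the second explicitly with the set $A_1^{\bullet}$.
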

\begin{proof}
We will prove only the first statement  because the latter can be proved analogously. 
We have $\mid$$A_{1}$$\mid$ $\geq cn$ and $\mid$$A_{2}$$\mid$ $\geq cn$. Let $A_{1}^{*} = \lbrace a \in A_{1}; \exists s \in T_{2}$ and $a \leftarrow s \rbrace$ and let $A_{2}^{*} = \lbrace x \in A_{2}; \exists v \in T_{1}$ and $v \leftarrow x \rbrace$. Then $A_{1}\backslash A_{1}^{*}$ is complete to $T_{2}$ and $A_{2}\backslash A_{2}^{*}$ is complete from $T_{1}$. Now assume that $\mid$$A_{1}^{*}$$\mid$ $< \frac{\mid A_{1} \mid}{2}$, then $\mid$$A_{1}\backslash A_{1}^{*}$$\mid$ $\geq \frac{\mid A_{1} \mid}{2} \geq \frac{c}{2}n$. Since $\mid$$ T_{2}$$\mid$ $\geq f.tr(T)$ and since $\epsilon < log_{\frac{c}{2}}(1-f)$, then Lemma \ref{f} implies that $A_{1}\backslash A_{1}^{*}$ is not complete to $T_{2}$, a contradiction. Then $\mid$$A_{1}^{*}$$\mid$ $\geq \frac{\mid A_{1} \mid}{2} \geq \frac{c}{2}n$. Similarly we prove that $\mid$$A_{2}^{*}$$\mid$ $\geq  \frac{c}{2}n$. Now since $\epsilon < log_{\frac{c}{4}}(\frac{1}{2})$, then Lemma \ref{v} implies that $\exists k \geq \frac{c}{4}n$, $\exists a_{1},...,a_{k} \in A_{1}^{*}$, $\exists x_{1},...,x_{k} \in A_{2}^{*}$, such that $a_{i} \leftarrow x_{i}$ for $i = 1,...,k$.  So, $\exists a_{1} \in A_{1}^{*}$, $\exists x_{1} \in A_{2}^{*}$, $\exists s_{1} \in T_{1}$, $\exists s_{2} \in T_{2}$ such that $\lbrace a_{1},s_{1}\rbrace \leftarrow x_{1}$ and $a_{1}\leftarrow s_{2}$.  $\hfill {\square}$
\end{proof}
\begin{lemma} \label{q}
Let $f_{1},...,f_{m},c,\epsilon > 0$ be constants, where $0 <  f_{1},...,f_{m},c < 1$ and $0 < \epsilon < log_{\frac{c}{2m}}(1-f_{i})$ for $i=1,...,m$. Let $T$ be an $ \epsilon $$-$critical tournament with $\mid$$T$$\mid$ $ =n$, and   let $T_{1},...,T_{m}$ be m disjoint transitive subtournaments of $T$ with $ \mid $$T_{i}$$ \mid$ $\geq f_{i}.tr(T)$ for $i=1,...,m$. Let $A \subseteq V(T) \backslash (\bigcup_{i=1}^{m} V(T_{i}))$ with $ \mid $$A$$ \mid$ $ \geq cn$.
Then there exist vertices $s_{1},...,s_{m},a$ such that $a\in A$, $s_{i}\in T_{i}$ for $i=1,...,m$, and $\lbrace a \rbrace$ is complete to $\lbrace s_{1},...,s_{m} \rbrace$. Similarly there exist vertices $u_{1},...,u_{m},b$ such that $b\in A$, $u_{i}\in T_{i}$ for $i=1,...,m$, and $\lbrace b \rbrace$ is complete from $\lbrace u_{1},...,u_{m} \rbrace$.
\end{lemma}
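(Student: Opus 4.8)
The plan is to prove only the first statement; the second follows by the same argument with the roles of outneighbors and inneighbors (and of ``complete from'' and ``complete to'' in Lemma \ref{f}) interchanged. The idea is to prune $A$ greedily, one transitive set at a time. Set $A_0 = A$, and for $i = 1,\dots,m$ put
\[
A_i \;=\; \{\, a \in A_{i-1} : a \rightarrow s \text{ for some } s \in S_i \,\}.
\]
This produces a nested chain $A = A_0 \supseteq A_1 \supseteq \cdots \supseteq A_m$, and by construction every $a \in A_m$ has, for each $i$, an outneighbor $s_i \in S_i$, so that $\{a\}$ is complete to $\{s_1,\dots,s_m\}$. Hence it suffices to show $A_m \neq \emptyset$.

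Fix $i \in \{1,\dots,m\}$. Every vertex of $A_{i-1}\setminus A_i$ has no outneighbor in $S_i$, hence is adjacent from every vertex of $S_i$; in other words $A_{i-1}\setminus A_i$ is complete from $S_i$. Moreover $A_{i-1}\setminus A_i \subseteq A \subseteq V(T)\setminus\bigcup_j V(S_j)$ is disjoint from $V(S_i)$, and $S_i$ is a transitive subtournament of $T$ with $|S_i| \geq f_i\, tr(T)$. Since $0 < \tfrac{c}{2m} < 1$ and $\epsilon < \log_{\frac{c}{2m}}(1-f_i)$, the contrapositive of Lemma \ref{f} (applied with the constant $\tfrac{c}{2m}$ in place of $c$, with $f_i$, and with $G = S_i$) yields $|A_{i-1}\setminus A_i| < \tfrac{c}{2m}\, n$: were it at least $\tfrac{c}{2m}\, n$, Lemma \ref{f} would say it is not complete from $S_i$, a contradiction.

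Now telescoping gives $|A_m| = |A| - \sum_{i=1}^m |A_{i-1}\setminus A_i| > cn - m\cdot\tfrac{c}{2m}\, n = \tfrac{c}{2}\, n > 0$, so $A_m \neq \emptyset$; picking $a \in A_m$ and, for each $i$, an outneighbor $s_i \in S_i$ of $a$ proves the first statement. For the second, run the analogous construction with $B_0 = A$ and $B_i = \{\, b \in B_{i-1} : u \rightarrow b \text{ for some } u \in S_i \,\}$: then $B_{i-1}\setminus B_i$ is complete to $S_i$, so the ``not complete to'' half of Lemma \ref{f} forces $|B_{i-1}\setminus B_i| < \tfrac{c}{2m}\, n$, hence $|B_m| > \tfrac{c}{2}\, n > 0$, and any $b \in B_m$ together with inneighbors $u_i \in S_i$ works.

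There is no genuine obstacle here; the one point requiring care is the calibration of the pruning threshold. Using $\tfrac{c}{2m}$ works because it is simultaneously the constant the hypothesis certifies (namely $\epsilon < \log_{\frac{c}{2m}}(1-f_i)$, which is exactly the input Lemma \ref{f} needs at each of the $m$ pruning steps) and small enough that $m$ removals of size less than $\tfrac{c}{2m}\, n$ still leave a set of positive size greater than $\tfrac{c}{2}\, n$. That is essentially all there is to verify.
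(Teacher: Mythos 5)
Your proof is correct and follows essentially the same route as the paper: in both arguments one shows, via Lemma \ref{f} applied with the constant $\frac{c}{2m}$, that for each $i$ the vertices of $A$ with no outneighbor in $S_{i}$ form a set of size less than $\frac{c}{2m}n$, so after discarding all of them at least $\frac{c}{2}n$ vertices of $A$ remain, any one of which serves as $a$. Your sequential pruning $A_{0}\supseteq A_{1}\supseteq\cdots\supseteq A_{m}$ is just a telescoped version of the paper's removal of the union of these bad sets, so there is nothing substantive to change.
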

\begin{proof}
We will prove only the first statement  because the latter can be proved analogously.
Let $A_{i} \subseteq A$ such that $A_{i}$ is complete from $T_{i}$ for $i = 1,...,m$. Let $1\leq j \leq m$. If $\mid$$A_{j}$$\mid \geq \frac{\mid A \mid}{2m}\geq \frac{c}{2m}n$, then this will contradicts Lemma \ref{f} since $\mid$$T_{j}$$\mid \geq f_{j}tr(T)$ and $\epsilon < log_{\frac{c}{2m}}(1-f_{j})$. Then $\forall i \in \lbrace 1,...,m \rbrace$, $\mid$$A_{i}$$\mid < \frac{\mid A \mid}{2m}$. Let $A^{*} = A\backslash (\bigcup_{i=1}^{m}A_{i})$, then $\mid$$A^{*}$$\mid > cn-m.\frac{cn}{2m} \geq \frac{c}{2}n$. Then $A^{*} \neq \phi$. So there exist vertices $s_{1},...,s_{m},a$ such that $a\in A^{*}$, $s_{i}\in T_{i}$ for $i=1,...,m$, and $\lbrace a \rbrace$ is complete to $\lbrace s_{1},...,s_{m} \rbrace$. $\hfill {\square}$ 
\end{proof}
\vspace{4mm}\\
We omit the proof of the following two Lemmas since they are completely analogous to the proof of Lemma \ref{q}.
\begin{lemma} \label{w} 
Let $f,c,\epsilon > 0$ be constants, where $0 < f, c < 1$ and $0 < \epsilon < log_{c}(1-\frac{f}{4})$. Let $T$ be an $ \epsilon $$-$critical tournament with $\mid$$T$$\mid$ $ =n$, and let $A,B \subseteq V(T)$ be two disjoint subsets with $ \mid $$A$$ \mid$ $ \geq cn$ and $ \mid $$B$$ \mid$ $ \geq cn $. Let $S$ be a transitive subtournament of $T$ with $ \mid $$S$$ \mid$ $\geq f.tr(T)$ and $V(S) \subseteq V(T)\backslash (A\cup B)$. Then there exist vertices $a,b,s$ such that $a\in A, b\in B, s\in S$, and $a \leftarrow s \leftarrow b$.
\end{lemma}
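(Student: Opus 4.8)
The plan is to run the same kind of argument as in the proof of Lemma~\ref{q}, except that the ``bad'' vertices are now removed from the transitive set $S$ rather than from the linear sets. First I would put $S^{(1)}=\{\,s\in S:\ A\ \hbox{is complete to}\ \{s\}\,\}$, the set of vertices of $S$ with no out-neighbour in $A$, and $S^{(2)}=\{\,s\in S:\ \{s\}\ \hbox{is complete to}\ B\,\}$, the set of vertices of $S$ with no in-neighbour in $B$. For any $s\in S\setminus(S^{(1)}\cup S^{(2)})$ there is, since $A$ is not complete to $\{s\}$, a vertex $a\in A$ with $s\rightarrow a$, and, since $\{s\}$ is not complete to $B$, a vertex $b\in B$ with $b\rightarrow s$; thus $a\leftarrow s\leftarrow b$, which is exactly what we want. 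So it suffices to show $|S^{(1)}|+|S^{(2)}|<|S|$.

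To bound $S^{(1)}$: it is a subtournament of the transitive tournament $T|S$, hence $T|S^{(1)}$ is transitive, and $V(S^{(1)})\cap A=\emptyset$ because $V(S)\cap A=\emptyset$. If $|S^{(1)}|\geq\frac{f}{4}tr(T)$, I would apply Lemma~\ref{f} with its constants instantiated as $c$ and $\frac{f}{4}$ respectively — legitimate since $|A|\geq cn$ and $\epsilon<\log_{c}(1-\frac{f}{4})$ by hypothesis — to get that $A$ is not complete to $V(S^{(1)})$, contradicting the definition of $S^{(1)}$. Hence $|S^{(1)}|<\frac{f}{4}tr(T)$. The bound on $S^{(2)}$ is symmetric: every vertex of $S^{(2)}$ is complete to $B$, so $B$ is complete from $V(S^{(2)})$ and $V(S^{(2)})\cap B=\emptyset$; if $|S^{(2)}|\geq\frac{f}{4}tr(T)$, Lemma~\ref{f} (now with $B$ in the role of ``$A$'', same constants $c$ and $\frac{f}{4}$) gives that $B$ is not complete from $V(S^{(2)})$, a contradiction, so $|S^{(2)}|<\frac{f}{4}tr(T)$.

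Combining the two bounds, $|S^{(1)}|+|S^{(2)}|<\frac{f}{2}tr(T)\leq\frac{1}{2}|S|<|S|$, so $S\setminus(S^{(1)}\cup S^{(2)})\neq\emptyset$; taking any $s$ in it together with the vertices $a\in A$ and $b\in B$ produced above completes the proof. I do not expect a real obstacle here: the content is just keeping track of the arc directions, plus the single idea that the counting must be carried out inside the transitive set $S$ (so that one vertex $s$ serves $A$ and $B$ simultaneously), which is precisely the dual of the ``count inside the linear set'' step of Lemma~\ref{q}. Note also that the constant $\frac{f}{4}$ is not tight — even $\frac{f}{2}$ per bad set would still force the complement to be nonempty.
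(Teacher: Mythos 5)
Your proof is correct and is exactly the argument the paper intends when it omits the proof as ``completely analogous to Lemma \ref{q}'': you bound the two bad subsets of $S$ (vertices complete from $A$, vertices complete to $B$) by $\frac{f}{4}tr(T)$ each via Lemma \ref{f}, using precisely the hypothesis $\epsilon<\log_{c}(1-\frac{f}{4})$, and conclude that some $s\in S$ has an out-neighbour in $A$ and an in-neighbour in $B$. Your side remark that $\frac{f}{2}$ per bad set would also suffice is likewise accurate.
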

   \begin{lemma} \label{x} 
Let $f_{1},f_{2},c,\epsilon > 0$ be constants, where $0 <  f_{1},f_{2},c < 1$ and $0 < \epsilon < min\lbrace log_{\frac{c}{4}}(1-f_{1}), log_{\frac{c}{4}}(1-f_{2})\rbrace$. Let $T$ be an $ \epsilon $$-$critical tournament with $\mid$$T$$\mid$ $ =n$, and   let $S_{1},S_{2}$ be two disjoint transitive subtournaments of $T$ with $ \mid $$S_{1}$$ \mid$ $\geq f_{1}.tr(T)$ and $ \mid $$S_{2}$$ \mid$ $\geq f_{2}.tr(T)$. Let $A \subseteq V(T) \backslash (V(S_{1})\cup V(S_{2}))$ with $ \mid $$A$$ \mid$ $ \geq cn$. Then there exist vertices $s_{1},a,s_{2}$ such that $a\in A, s_{1}\in S_{1}, s_{2}\in S_{2}$, and $s_{1} \leftarrow a \leftarrow s_{2}$.
\end{lemma}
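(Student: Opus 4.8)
The plan is to mimic the proof of Lemma \ref{q} with $m=2$, the two ``forbidden'' behaviours now being ``complete from $S_1$'' and ``complete to $S_2$''. Observe first what the target configuration demands: $s_1 \leftarrow a \leftarrow s_2$ means precisely that $a\in A$ has an out-neighbour in $S_1$ (namely $s_1$, since $s_1\leftarrow a$ is $a\to s_1$) and an in-neighbour in $S_2$ (namely $s_2$, since $a\leftarrow s_2$ is $s_2\to a$). Hence a vertex $a\in A$ is \emph{unusable} only if $S_1$ is complete to $\{a\}$ (so $a$ has no out-neighbour in $S_1$) or $\{a\}$ is complete to $S_2$ (so $a$ has no in-neighbour in $S_2$). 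So it suffices to show that ``most'' vertices of $A$ avoid both bad behaviours.

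First I would set
$A_1=\{a\in A:\ S_1 \text{ is complete to } \{a\}\}$ and $A_2=\{a\in A:\ \{a\}\text{ is complete to } S_2\}$.
The key step is the estimate $|A_i|<\tfrac14|A|$ for $i=1,2$. Indeed, if $|A_1|\ge\tfrac14|A|\ge\tfrac{c}{4}n$, then $A_1$ is a set of at least $\tfrac{c}{4}n$ vertices, disjoint from $V(S_1)$, that is complete from the transitive subtournament $S_1$ with $|S_1|\ge f_1\cdot tr(T)$; since $\epsilon<\log_{c/4}(1-f_1)$, this contradicts Lemma \ref{f} applied with the density constant $\tfrac{c}{4}$ and with $f=f_1$. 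Symmetrically, $|A_2|\ge\tfrac14|A|$ would make $A_2$ a large set complete to $S_2$, contradicting Lemma \ref{f} via $\epsilon<\log_{c/4}(1-f_2)$. (A bound of $\tfrac12|A|$ would already be enough for what follows, so the hypothesis with $\log_{c/4}$ leaves room to spare; I keep $\tfrac14$ only to match the $m=2$ instance of the Lemma \ref{q} template.)

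Then I would put $A^{*}=A\setminus(A_1\cup A_2)$, so that $|A^{*}|\ge |A|-|A_1|-|A_2|>|A|-\tfrac14|A|-\tfrac14|A|=\tfrac12|A|\ge\tfrac{c}{2}n>0$, hence $A^{*}\neq\emptyset$. Fix any $a\in A^{*}$: since $a\notin A_1$, $S_1$ is not complete to $\{a\}$, so some $s_1\in S_1$ satisfies $a\to s_1$, i.e.\ $s_1\leftarrow a$; since $a\notin A_2$, $\{a\}$ is not complete to $S_2$, so some $s_2\in S_2$ satisfies $s_2\to a$, i.e.\ $a\leftarrow s_2$. Thus $s_1\leftarrow a\leftarrow s_2$, as required. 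I do not expect a genuine obstacle here: the argument is a routine density/counting step, and the only thing to be careful about is lining up the base of the logarithm in the hypothesis on $\epsilon$ with the fraction of $A$ discarded for each bad set, so that Lemma \ref{f} is actually applicable — exactly as in Lemma \ref{q}, which is why the paper omits the details.
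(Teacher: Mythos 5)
Your proof is correct and is exactly the argument the paper intends: it omits the proof of this lemma as ``completely analogous to Lemma \ref{q}'', and your construction (the two bad sets $A_{1},A_{2}$, the bound $\mid A_{i}\mid < \frac{1}{4}\mid A\mid$ obtained from Lemma \ref{f} with density constant $\frac{c}{4}$ and $f=f_{i}$, and the nonempty remainder $A^{*}$ yielding $s_{1}\leftarrow a\leftarrow s_{2}$) is precisely the $m=2$ instance of that template with the two completeness conditions oriented as required here. No gaps; the alignment of the logarithm base $\frac{c}{4}$ with the discarded fraction is handled correctly.
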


Let $s$ be a $\lbrace 0,1 \rbrace$$-$vector. Denote by $s_{c}$ the vector obtained from $s$ by replacing every subsequence of consecutive $1'$s by single $1$.

A tournament $T$ is a \textit{regular super galaxy under an ordering} $\alpha$ of its vertices if $T$ is a super galaxy under $\alpha$, such that every connected component of $ B(T,\alpha) $ is a star. Let $G$ be a regular super galaxy under an ordering $\alpha =(u_{1},...,u_{g})$ of its vertices with $\mid$$G$$\mid$ $= g$.  Let $Q_{1},...,Q_{l}$ be the stars of $G$ under $\alpha$. Let $L_i$ be the set of leaves of $Q_i$, and let $W_i=G$$\mid$$L_i$ for $i=1,...,l$. Let $\widehat{G}$ be the graph with vertex set $V(G)$ and arc set $E(G)-(\displaystyle{\bigcup_{i=1}^{l}E(W_i)})$. We call $\widehat{G}$ the \textit{mutant super galaxy corresponding to $G$ under} $\alpha$ and $\alpha$ is called the \textit{forest ordering of} $\widehat{G}$. For $i \in \lbrace 0,...,l \rbrace$ define $G^{i} = G$$\mid$$\bigcup_{j=1}^{i} V(Q_{j})$ where $G^{l} = G$, and $G^{0}$ is the empty tournament.\\
A tournament $\mathcal{K}$ is a \textit{super path-galaxy under an ordering} $ \theta =(v_1,...,v_h) $ of its vertices if every  connected component of $ B(\mathcal{K},\theta) $ is either a star or a $4$-path, such that only one of the connected components  is a $4$-path (if exists), and there exists no center of a star appearing in the ordering $\theta$ between the ends of the $4$-path. Let $Q_1,...,Q_l$ be the stars of $\mathcal{K}$ under $\theta$, and let $P :=\{v_{i_1},v_{i_2},v_{i_3},v_{i_4}\}$ be the $4$-path of $\mathcal{K}$ under $\theta$, such that $P= v_{i_{j_1}},...,v_{i_{j_4}}$. Let $G:=\mathcal{K}$$\mid$$V(\bigcup_{i=1}^lQ_i)$ and let $\alpha =(u_1,...,u_g)=(v_{i_1},...,v_{i_g})$ be the restriction of $\theta$ to $V(G)$. Let $\widehat{\mathcal{K}}$ be the digraph obtained from $\mathcal{K}$ by replacing $E(G)$ by $E(\widehat{G})$, deleting the arc connecting the vertices $v_{i_{j_1}}$ and $v_{i_{j_3}}$, and deleting the arc connecting the vertices $v_{i_{j_2}}$ and $v_{i_{j_4}}$. We call call $\theta$ the \textit{forest ordering of} $\widehat{\mathcal{K}}$. Note that every regular super galaxy is a super path-galaxy. Also notice that in case $P$ does not exist, then $G=\mathcal{K}$ and $\widehat{G}=\widehat{\mathcal{K}}$. 
 Let $s^{\mathcal{K},\theta}$ be a $\lbrace 0,1 \rbrace$$-$vector such that $s^{\mathcal{K},\theta}(i) = 1$ if and only if $v_{i}$ is a leaf of one of the stars of $\mathcal{K}$ under $\theta$, or an end of $P$. Let $w = s_{c}^{\mathcal{K},\theta}$ and let $i_{r}$ be such that $w_{i_{r}}=1$. Let $j$ be such that $s^{\mathcal{K},\theta}_{j}=1$. We say that $s^{\mathcal{K},\theta}_{j}$ \textit{corresponds to} $w_{i_{r}}$ if $s^{\mathcal{K},\theta}_{j}$ belongs to the subsequence of consecutive $1'$s that is replaced by the entry $w_{i_{r}}$. For $k \in \lbrace 1,...,l \rbrace$, let $\alpha_{k} = (u_{k_{1}},...,u_{k_{t_{k}}})$ with $t_{k}=$ $\mid$$ G^{k}$$ \mid$, be the restriction of $\alpha$ to $V(G^{k})$.  Let $s^{\mathcal{K},\theta}_{G^{k}}$ be the restriction of $s^{\mathcal{K},\theta}$ to the $0's$ and $1's$ corresponding to $V(G^{k})$ (notice that $s^{\mathcal{K},\theta}_{G^{k}}= s^{G^{k},\alpha_{k}}$), and let $^{c}s^{\mathcal{K},\theta}_{G^{k}}$ be the vector obtained from $s^{\mathcal{K},\theta}_{G^{k}}$ by replacing every subsequence of consecutive $1's$ corresponding to the same entry of $s^{\mathcal{K},\theta}_{c}$ by single $1$. We say that a smooth $(c,\lambda ,w)$$-$structure of a tournament $T$ \textit{corresponds}  \textit{to $G^{k}$ under $(\mathcal{K},\theta)$} if $w =$ $ ^{c}s^{\mathcal{K},\theta}_{G^{k}}$. Notice that $s^{\mathcal{K},\theta}_{G^{l}}=s^{G,\alpha}$ and $^{c}s^{\mathcal{K},\theta}_{G^{l}}=s^{G,\alpha}_{c}$.\\
Let $\nu =$ $^{c}s^{\mathcal{K},\theta}_{G^{k}}$. Let $\delta^{\nu}:$ $\lbrace j: \nu_{j} = 1 \rbrace \rightarrow \mathbb{N}$ be a function that assigns to every nonzero entry of $\nu$ the number of consecutive $1'$s of $s^{\mathcal{K},\theta}_{G^{k}}$ replaced by that entry of $\nu$.
Fix $k \in \lbrace 0,...,l \rbrace$. Let $(A_{1},...,A_{\mid w \mid})$ be a smooth $(c,\lambda ,w)$$-$structure corresponding to $G^{k}$ under $(\mathcal{K},\theta)$.
Let $r$ be such that $w(r) = 1$. Assume that $A_{r} = \lbrace s^{1}_{r},...,s_{r}^{\mid A_{r} \mid} \rbrace$ and $(s^{1}_{r},...,s_{r}^{\mid A_{r} \mid})$ is a transitive ordering. Write $m(r) = \lfloor\frac{\mid A_{r} \mid}{\delta^{w}(r)}\rfloor$. Denote $A^{j}_{r} = \lbrace s^{(j-1)m(r)+1}_{r},...,s_{r}^{jm(r)} \rbrace$ for $j \in \lbrace 1,...,\delta^{w}(r) \rbrace$. For every $v \in A^{j}_{r}$ denote $\xi(v) = (\mid$$\lbrace t < r: w(t) = 0 \rbrace$$\mid$ $+$ $\displaystyle{\sum_{t < r: w(t) = 1}\delta^{w}(t) })$ $+$ $j$. In other words, $\xi(v)$ is the index $i$ of the vertex $u_{k_i}$ associated with the set $A^j_{r}$. For every $v \in A_{r}$ such that $w(r) = 0$ denote $\xi(v) = (\mid$$\lbrace t < r: w(t) = 0 \rbrace$$\mid$ $+$ $\displaystyle{\sum_{t < r: w(t) = 1}\delta^{w}(t) })$ $+$ $1$. In other words, $\xi(v)$ is the index $i$ of the vertex $u_{k_i}$ associated with the set $A_{r}$. We say that $\widehat{G}^{k}$ is \textit{well-contained in} $(A_{1},...,A_{\mid w \mid})$ that corresponds to $\mathcal{K}^{k}$ under $(\mathcal{K},\theta)$ if there is a homomorphism $f$ of $\widehat{\mathcal{K}}^{k}$ into $T$$\mid$$\bigcup_{i = 1}^{\mid w \mid}A_{i}$ such that $\xi(f(u_{k_j})) = j$ for every $j \in \lbrace 1,...,t_{k} \rbrace$, where $\theta_{k} = (u_{k_{1}},...,u_{k_{t_{k}}})$.
\begin{lemma} \label{supergalaxy}
Let $\mathcal{K}$ be a super path-galaxy under an ordering $\theta$ of its vertices with $\mid$$\mathcal{K}$$\mid$ $= h$, and let $Q_1,...,Q_l$ be the stars of $\mathcal{K}$ under $\theta$. Let $G:=\mathcal{K}$$\mid$$V(\bigcup_{i=1}^lQ_i)$ and let $\alpha =(u_1,...,u_g)$ be the restriction of $\theta$ to $V(G)$.  Let $0 < \lambda < \frac{1}{(2g)^{g+2}}$, $c > 0$ be constants, and $w$ be a $\lbrace 0,1 \rbrace$$-$vector. Fix $k \in \lbrace 0,...,l \rbrace$ and let $\widehat{\lambda} = (2g)^{l-k}\lambda$ and $\widehat{c} = \frac{c}{(2g)^{l-k}}$. There exist $ \epsilon_{k} > 0$ such that $\forall 0 < \epsilon < \epsilon_{k}$, for every $\epsilon$$-$critical tournament $T$ with $\mid$$T$$\mid$ $= n$ containing $\chi = (A_{1},...,A_{\mid w \mid})$ as a smooth $(\widehat{c},\widehat{\lambda},w)$$-$structure corresponding to $G^{k}$ under $(\mathcal{K},\theta)$, we have $\widehat{G}^{k}$ is well-contained in $\chi$.  
\end{lemma}
\begin{proof}
The proof is by induction on $k$. For $k=0$ the statement is obvious since $\widehat{G}^{0}$ is the empty digraph. Let $\alpha_{k}= (h_{1},...,h_{\mid G^{k} \mid})$. Suppose that $\chi = (A_{1},...,A_{\mid w \mid})$ is a smooth $(\widehat{c},\widehat{\lambda},w)$$-$structure in $T$ corresponding to $G^{k}$ under $(\mathcal{K},\theta)$.  Let $h_{p_{0}}$ be the center of $Q_{k}$ and $h_{p_{1}},...,h_{p_{q}}$ be its leaves for some integer $q>0$. 
$\forall 0 \leq i \leq q$, let $R_{i} = \lbrace v \in \displaystyle{\bigcup_{j=1}^{\mid w \mid}A_{j}};$ $ \xi(v) = p_{i} \rbrace$.
 Since we can assume that $\epsilon < log_{\frac{\widehat{c}}{2g}}(1-\frac{\widehat{c}}{g})$, then by Lemma \ref{q} there exist vertices $r_{0},r_{1},...,r_{q}$ such that $r_{i} \in R_{i}$ for $i=0,1,...,q$ and \\
$\ast$ $r_{1},...,r_{q}$ are all adjacent from $r_{0}$ if $z>f$.\\
$\ast$ $r_{1},...,r_{q}$ are all adjacent to $r_{0}$ if $z<f$.\\
So $T$$\mid$$\lbrace r_{0},r_{1},...,r_{q} \rbrace$ contains a copy of $\widehat{G}^{k}$$\mid$$V(Q_{k})$. Denote this copy by $Y$. Since $r_0$ is the center of $Q_k$ and $r_1,...,r_q$ are its leaves, then there exists $ z \in \lbrace 1,...,\mid$$w$$\mid \rbrace$ with $w(z)=0$, such that $R_{0} = A_{z}$, and there exist $ x_1,...,x_q \in \lbrace 1,...,\mid$$w$$\mid \rbrace$, with $w(x_i)=1$  and $R_i\subseteq A_{x_i}$ for $i=1,...,q$ (note here that we don't necessarily have $x_1,...,x_q$ are distinct).   
For all $ i \in \lbrace 1,...,\mid$$w$$\mid \rbrace - \lbrace z,x_1,...,x_q \rbrace$, let $A_{i}^{*} = \displaystyle{\bigcap_{p\in V(Y)}A_{i,p}}$.  Then by Lemma \ref{u}, $\mid$$A_{i}^{*}$$\mid$ $\geq (1-\mid$$Y$$\mid\widehat{\lambda})\mid$$A_{i}$$\mid$ $\geq (1-g\widehat{\lambda})\mid$$A_{i}$$\mid$ $\geq \frac{1}{2g}\mid$$A_{i}$$\mid$ since $\widehat{\lambda} \leq \frac{2g-1}{2g^{2}}$.
Write $\mathcal{H} = \lbrace 1,...,\mid $$G^{k}$$ \mid \rbrace - \lbrace p_{0},...,p_{q} \rbrace$. If $\lbrace v\in A_{x_i}: \xi(v) \in \mathcal{H} \rbrace \neq \phi$, then define $J_{x_i} = \lbrace \eta \in \mathcal{H}: \exists v \in A_{x_i}$ and $\xi(v)= \eta \rbrace$. Now for all $ \eta \in J_{x_i}$, let $A_{x_i}^{*\eta}= \lbrace v \in A_{x_i}: \xi(v)=\eta$ and $v \in \displaystyle{\bigcap_{p\in V(Y)}A_{x_i,p}} \rbrace$. Then by Lemma \ref{u}, for all $ \eta \in J_{f}$, we have $\mid$$A_{x_i}^{*\eta}$$\mid$ $ \geq \frac{1-g^2\widehat{\lambda}}{g}\mid $$A_{x_i}$$\mid $ $\geq \frac{\mid A_{x_i}\mid}{2g}$ since $\widehat{\lambda} \leq \frac{1}{2g^2}$. Now for all $ \eta \in J_{x_i}$, select arbitrary $\lceil \frac{\mid A_{x_i}\mid}{2g}\rceil$ vertices of $A_{x_i}^{*\eta}$ and denote the union of these $\mid$$J_{x_i}$$\mid$ sets by $A_{x_i}^{*}$. 
So we have defined $t$ sets $A_{1}^{*},...,A^{*}_{t}$. We have $\mid$$A_{i}^{*}$$\mid$ $\geq \frac{\mid A_{x_i}\mid}{2g}$ for $i=1,...,t$. Now Lemma \ref{s} implies that $\chi^{*}=(A_{1}^{*},...,A^{*}_{t})$ form a smooth $(\frac{\widehat{c}}{2g},2g\widehat{\lambda},w^{*})$$-$structure of $T$ corresponding  to $G^{k-1}$ under $(\mathcal{K},\theta)$, where $\frac{\widehat{c}}{2g}= \frac{c}{(2g)^{l-(k-1)}}, 2g\widehat{\lambda}=(2g)^{l-(k-1)}\lambda$, and $w^{*}$ is an appropriate $\lbrace 0,1 \rbrace$$-$vector.
Now take $\epsilon_{k} < min \lbrace \epsilon_{k-1}, log_{\frac{\widehat{c}}{2g}}(1-\frac{\widehat{c}}{g}) \rbrace$. So by the  induction hypothesis $\widehat{G}^{k-1}$ is well-contained in $\chi^{*}$. Now by merging the well-contained copy of $\widehat{G}^{k-1}$ and $Y$ we get a copy of $\widehat{G}^{k}$. $\hfill{ \square}$\vspace{3mm}\\ 
\end{proof}
Let $\mathcal{K}$ be a super path-galaxy under $ \theta =(v_1,...,v_h) $. We say that a smooth $(c,\lambda ,w)$$-$structure of a tournament $T$ \textit{corresponds}  \textit{to $\mathcal{K}$ under $\theta$} if $w =$ $s^{\mathcal{K},\theta}_{c}$. \\
Let $\delta^{s^{\mathcal{K},\theta}_{c}}:$ $\lbrace j: s^{\mathcal{K},\theta}_{cj} = 1 \rbrace \rightarrow \mathbb{N}$ be a function that assigns to every nonzero entry of $s^{\mathcal{K},\theta}_{c}$ the number of consecutive $1'$s of $s^{\mathcal{K},\theta}$ replaced by that entry of $s^{\mathcal{K},\theta}_{c}$. Let $(A_{1},...,A_{\mid w \mid})$ be a smooth $(c,\lambda ,w)$$-$structure corresponding to $\mathcal{K}$ under $\theta$.
   We say that $\widehat{\mathcal{K}}$ is \textit{well-contained in} $(A_{1},...,A_{\mid w \mid})$ that corresponds to $\mathcal{K}$ under $\theta$, if there is a homomorphism $f$ of $\widehat{\mathcal{K}}$ into $T$$\mid$$\bigcup_{i = 1}^{\mid w \mid}A_{i}$ such that $\xi(f(v_{j})) = j$ for every $j \in \lbrace 1,...,h \rbrace$.
\begin{lemma} \label{path-galaxy}
Let $\mathcal{K}$ be a super path-galaxy under an ordering $\theta$ of its vertices with $\mid$$\mathcal{K}$$\mid$ $= h$. Let $0 < \lambda < \frac{1}{(2g)^{g+2}}$, $c > 0$ be constants, with $g=h-4$. Let $\tilde{c}=3c$, $\tilde{\lambda}=\frac{\lambda}{3}$, $w$ a $\{0,1\}$-vector, and let $\epsilon >0$ be small enough. Let $\chi = (A_{1},...,A_{\mid w \mid})$ be a smooth $(\tilde{c},\tilde{\lambda},w)$$-$structure of an $n$-vertex $\epsilon$$-$critical tournament $T$, corresponding to $\mathcal{K}$ under $\theta$, then $\widehat{\mathcal{K}}$ is well-contained in $\chi$.  
\end{lemma}
\begin{proof}
Let $Q_{1},...,Q_{l}$ be the stars of $\mathcal{K}$ under $\theta =(v_1,...,v_h)$, let $G:=\mathcal{K}$$\mid$$V(\bigcup_{i=1}^lQ_i)$, and let $\theta_G := (v_{i_1},...,v_{i_g})=(u_1,...,u_g)$ be the restriction of $\theta$ to $V(G)$.  For all $i\in \{ i_1,...,i_g\}$, let  $M_{i} := \lbrace v \in \bigcup_{j=1}^{\mid w \mid}A_{j};$ $ \xi(v) = i \rbrace$. For all $ 1\leq j \leq $ $\mid$$w$$\mid$, let $R_{j}:=  \displaystyle{\bigcup_{M_{i}\subseteq A_{j}}M_{i}}$ (notice that we may have: $R_{j}= \phi$). Denote by $N_{1},...,N_{\mid w^{*}\mid}$ the non-empty sets $R_{j}$, where $w^*$ is a $\{0,1\}$-vector. Then $\chi^{*}=(N_{1},...,N_{\mid w^{*}\mid})$ is a smooth $(c,\lambda,w^{*})$$-$structure of $T$ corresponding to $G$ under $(u_1,...,u_g)$. Let $\epsilon_{l}$ be the $\epsilon$ from Lemma \ref{supergalaxy}. Taking $\epsilon < \epsilon_{l}$ and since $\lambda < \frac{1}{(2g)^{g+2}}$, then we can use Lemma \ref{supergalaxy} and conclude taking $k=l$ that $G$ is well-contained in $\chi^{*}$. Denote this well-contained copy of $G$ by $X$. Let $\{v_{j_1},...,v_{j_4}\}$ be the vertex set of the $4$-path in $B(\mathcal{K},\theta)$, and let $v_{j_1}$ and $v_{j_2}$ be its ends. For all $i\in \{ j_1,..., j_4\}$, let $M_{i} := \lbrace v \in \displaystyle{\bigcup_{j=1}^{\mid w \mid}S_{j}};$ $ \xi(v) = i \rbrace$. Let $M_i^*:= \displaystyle{\bigcap_{p\in V(X)}M_{i,p}}$. Then by Lemma \ref{u}, $\mid$$M_{i}^{*}$$\mid$ $\geq (1-g\tilde{\lambda})\mid$$M_{i}$$\mid$ $\geq \frac{1}{2}\mid$$M_{i}$$\mid$ $\geq \frac{\tilde{c}}{2}n$ for $i=i_3,i_4$, since $\tilde{\lambda} \leq \frac{1}{2g}$, and $\mid$$M_{i}^{*}$$\mid$ $\geq \frac{1-g(g+1)\tilde{\lambda}}{h}\tilde{c}tr(T)\geq  \frac{\tilde{c}}{2h}tr(T)$ for $i=i_1,i_2$, since $\tilde{\lambda} \leq \frac{1}{2g(g+1)}$. Now since we can assume that $\epsilon < min\lbrace log_{\frac{\tilde{c}}{8}}(\frac{1}{2}), log_{\frac{\tilde{c}}{4}}(1-\frac{\tilde{c}}{2h})\rbrace$, then by Lemma \ref{g}, there exist vertices $m_i\in M_i^*$ for $i=i_1,i_2,i_3,i_4$, such that $T$$\mid$$\{m_{i_1},...,m_{i_4}\}$ contains a copy of $\widehat{\mathcal{K}}$$\mid$$\{v_{j_1},...,v_{j_4}\}$. Denote this copy by $Y$. Now by merging $X$ and $Y$, we get a well-contained copy of $\widehat{\mathcal{K}}$ in $\chi$. This completes the proof. $\hfill{ \square}$\vspace{3.5mm}
\end{proof}

 In what follows we define  crucial super path-galaxies $\mathcal{K}_i$ under $\theta_{\mathcal{K}_i}$ for $i=1,...,7$, that will be useful in our latter analysis.  To this end, we will define the connected components of $B(\mathcal{K}_i,\theta_{\mathcal{K}_i})$ for $i=1,...,7$ (see Figure \ref{fig:supergalaxies}):
\begin{itemize}[-]
\item The connected components of $B(\mathcal{K}_1,\theta_{\mathcal{K}_1})$, with $\theta_{\mathcal{K}_1}:=(v_1,...,v_{23})$, are: The four right stars $\{v_{12},v_{14},$ $v_{20}\}, \{v_7,v_{18},v_{23}\}, \{v_3,v_{10},v_{19}\}, \{v_1,v_5\}$, the two left stars $\{v_2,v_{13},v_{16},v_{22}\},\{v_6,v_9,v_{15},v_{17}\}$, and the $4$-path $\{v_4,v_8,v_{11},v_{21}\}$,  such that $v_{11}v_{4},v_4v_{21},v_8v_{21}$ are the edges of the $4$-path $\{v_4,v_8,v_{11},v_{21}\}$.
\item The connected components of $B(\mathcal{K}_2,\theta_{\mathcal{K}_2})$, with $\theta_{\mathcal{K}_2}:=(v_1,...,v_{17})$, are: The two right stars $\{v_{8},v_{11}\},$ $ \{v_4,v_{13}\}$, the two left stars $\{v_1,v_{6},v_{9},v_{15},v_{17}\},\{v_2,v_{7},v_{12},v_{16}\}$, and the $4$-path $\{v_3,v_5,v_{10},v_{14}\}$,  such that $v_3v_{10},v_{10}v_{14},v_5v_{14}$ are the edges of the $4$-path $\{v_3,v_5,v_{10},v_{14}\}$.
\item The connected components of $B(\mathcal{K}_3,\theta_{\mathcal{K}_3})$, with $\theta_{\mathcal{K}_3}:=(v_1,...,v_{12})$, are: The two right stars $\{v_{2},v_{4},v_7,$ $v_{10}\}$ and $\{v_{3},v_{5},v_8,v_{12}\}$, and the $4$-path $\{v_1,v_6,v_{9},v_{11}\}$,  such that $v_9v_{1},v_{1}v_{11},v_{11}v_{6}$ are the edges of the $4$-path $\{v_1,v_6,v_{9},v_{11}\}$.
\item The connected components of $B(\mathcal{K}_4,\theta_{\mathcal{K}_4})$, with $\theta_{\mathcal{K}_4}:=(v_1,...,v_{16})$, are: The four right stars $\{v_{7},v_{10},$ $v_{13},v_{16}\}$, $\{v_{1},v_{3},v_9\}$, $\{v_{2},v_{5},v_{11}\}$, $\{v_{12},v_{14}\}$, and the $4$-path $\{v_4,v_6,v_{8},v_{15}\}$,  such that $v_8v_{4},v_{4}v_{15},v_{15}v_{6}$ are the edges of the $4$-path $\{v_4,v_6,v_{8},v_{15}\}$.
\item The connected components of $B(\mathcal{K}_5,\theta_{\mathcal{K}_5})$, with $\theta_{\mathcal{K}_5}:=(v_1,...,v_{19})$, are: The three right stars $\{v_{5},v_{7},$ $v_{16}\}$, $\{v_{6},v_{15}\}$, $\{v_{10},v_{14},v_{19}\}$, the two left stars $\{v_1,v_4,v_{9},v_{13}\}$, $\{v_3,v_{12},v_{18}\}$, and the $4$-path $\{v_2,v_8,v_{11},v_{17}\}$,  such that $v_2v_{11},v_{2}v_{17},v_{17}v_{8}$ are the edges of the $4$-path $\{v_2,v_8,v_{11},v_{17}\}$.
\item The connected components of $B(\mathcal{K}_6,\theta_{\mathcal{K}_6})$, with $\theta_{\mathcal{K}_6}:=(v_1,...,v_{15})$, are: The three right stars $\{v_{8},v_{10},$ $v_{15}\}$, $\{v_{5},v_{11}\}$, $\{v_{4},v_{9},v_{14}\}$, one left stars $\{v_1,v_6,v_{12}\}$, and the $4$-path $\{v_2,v_3,v_{7},v_{13}\}$,  such that $v_2v_{7},v_{7}v_{13},v_{13}v_{3}$ are the edges of the $4$-path $\{v_2,v_3,v_{7},v_{13}\}$.
\item The connected components of $B(\mathcal{K}_7,\theta_{\mathcal{K}_7})$, with $\theta_{\mathcal{K}_7}:=(v_1,...,v_{17})$, are: The two right stars $\{v_{5},v_{9},$ $v_{17}\}$, $\{v_{4},v_{11},v_{14}\}$, the two left stars $\{v_1,v_8,v_{13},v_{15}\}$, $\{v_3,v_7,v_{10}\}$, and the $4$-path $\{v_2,v_6,v_{12},v_{16}\}$,  such that $v_2v_{12},v_{2}v_{16},v_{16}v_{6}$ are the edges of the $4$-path $\{v_2,v_6,v_{12},v_{16}\}$.
\end{itemize}
\begin{figure}[h]
\centering
	\includegraphics[scale=0.8, width=16cm, height=9cm]{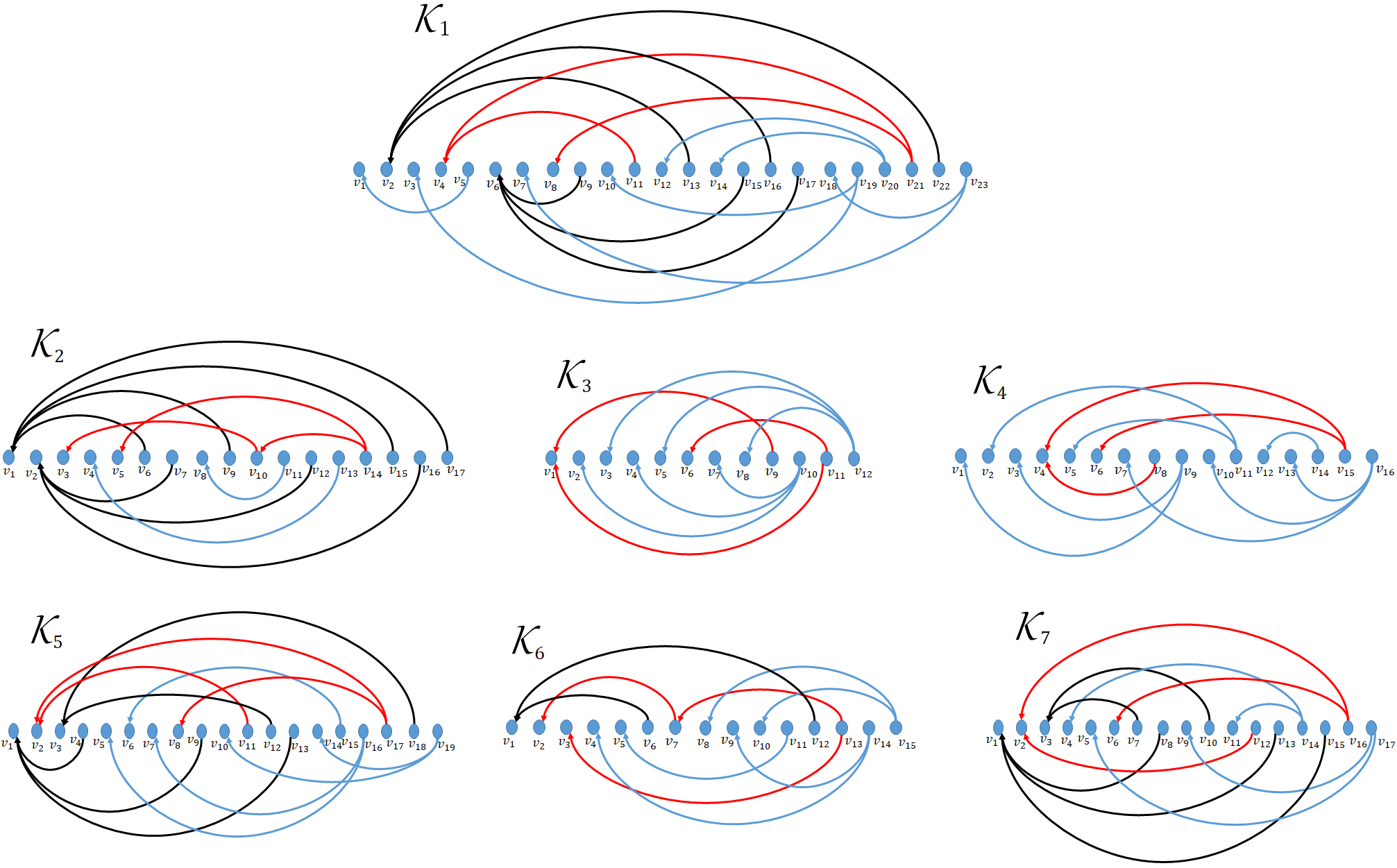}
	\caption{Super path-galaxies $\mathcal{K}_1$,...,$\mathcal{K}_7$. All the non drawn arcs are forward. The arcs corresponding to a $4$-path (resp. right stars) (resp. left stars) are colored in red (resp. blue) (resp. black).}
	\label{fig:supergalaxies}
\end{figure}  
\section{EHC for some classes of tournaments in $\cal S \cup \cal H$}
For each of the following tournaments we will define two special orderings of their vertex set and the set of backward arcs under each ordering (see Figure \ref{fig:orderingsS}). 
\begin{itemize}[-]
\item \textit{Star ordering of} $S_{1}$, $\theta_{S_{1}} := (d,f,v,a,b,e,c)$, $E(\theta_{S_{1}})= \lbrace (v,d),(b,d),(c,d),(e,f),(e,a) \rbrace $.\\ \textit{Cyclic ordering of} $S_{1}$, $\theta_{C_{1}} :=(f,v,b,e,c,d,a)$,  $E(\theta_{C_{1}})= \lbrace (d,e),(d,f),(e,f),(a,b),(a,c) \rbrace $.
\item \textit{Forest ordering of} $ S_{2} $, $\theta_{f_{2}} := (e,f,v,a,b,d,c)$, $E(\theta_{f_{2}})= \lbrace (v,e),(d,f),(d,e),(c,a) \rbrace $.\\ \textit{Cyclic ordering of} $ S_{2} $, $\theta_{C_{2}} :=(f,v,e,a,b,d,c)$,  $E(\theta_{C_{2}})= \lbrace (e,f),(d,f),(d,e),(c,a) \rbrace $.
\item \textit{Forest ordering of} $ S_{3} $, $\theta_{f_{3}} := (f,v,b,c,d,e,a)$, $E(\theta_{f_{3}})= \lbrace (e,f),(d,f),(e,c),(a,b) \rbrace $.\\ \textit{Cyclic ordering of} $ S_{3} $, $\theta_{C_{3}} :=(f,v,b,e,c,d,a)$,  $E(\theta_{C_{3}})= \lbrace (e,f),(d,f),(d,e),(a,b) \rbrace $.
\item \textit{Path ordering of} $ S_{4} $, $\theta_{S_{4}} := (v,d,f,c,a,e,b)$, $E(\theta_{S_{4}})= \lbrace (f,v),(b,c),(b,d),(e,f) \rbrace $.\\ \textit{Cyclic ordering of} $ S_{4} $, $\theta_{C_{4}} :=(v,d,e,f,a,b,c)$,  $E(\theta_{C_{4}})= \lbrace (f,v),(b,d),(a,e),(c,a),(c,e) \rbrace $.
\item \textit{Forest ordering of} $ S_{5} $, $\theta_{f_{5}} := (v,a,c,f,e,d,b)$, $E(\theta_{f_{5}})= \lbrace (b,c),(b,f),(d,a),(f,v) \rbrace $.\\ \textit{Cyclic ordering of} $ S_{5} $, $\theta_{C_{5}} :=(a,f,v,c,e,d,b)$,  $E(\theta_{C_{5}})= \lbrace (b,f),(b,c),(c,f),(v,a),(d,a) \rbrace $.
\item \textit{Forest ordering of} $ S_{9} $, $\theta_{f_{9}} := (v,b,c,a,d,f,e)$, $E(\theta_{f_{9}})= \lbrace (f,v),(a,b),(e,c),(e,b) \rbrace $.\\ \textit{Cyclic ordering of} $ S_{9} $, $\theta_{C_{9}} :=(v,c,a,b,d,f,e)$,  $E(\theta_{C_{9}})= \lbrace (f,v),(b,c),(e,b),(e,c) \rbrace $.
\item \textit{Forest ordering of} $ S_{10} $, $\theta_{f_{10}} := (d,c,v,e,a,f,b)$, $E(\theta_{f_{10}})= \lbrace (a,d),(b,d),(e,c),(f,e),(f,v) \rbrace $.\\ \textit{Cyclic ordering of} $ S_{10} $, $\theta_{C_{10}} :=(d,e,c,v,a,f,b)$,  $E(\theta_{C_{10}})= \lbrace (b,d),(a,d),(v,e),(f,e),(f,v) \rbrace $.
\end{itemize}
\begin{figure}[h]
\centering
	\includegraphics[scale=0.8, width=16cm, height=8.2cm]{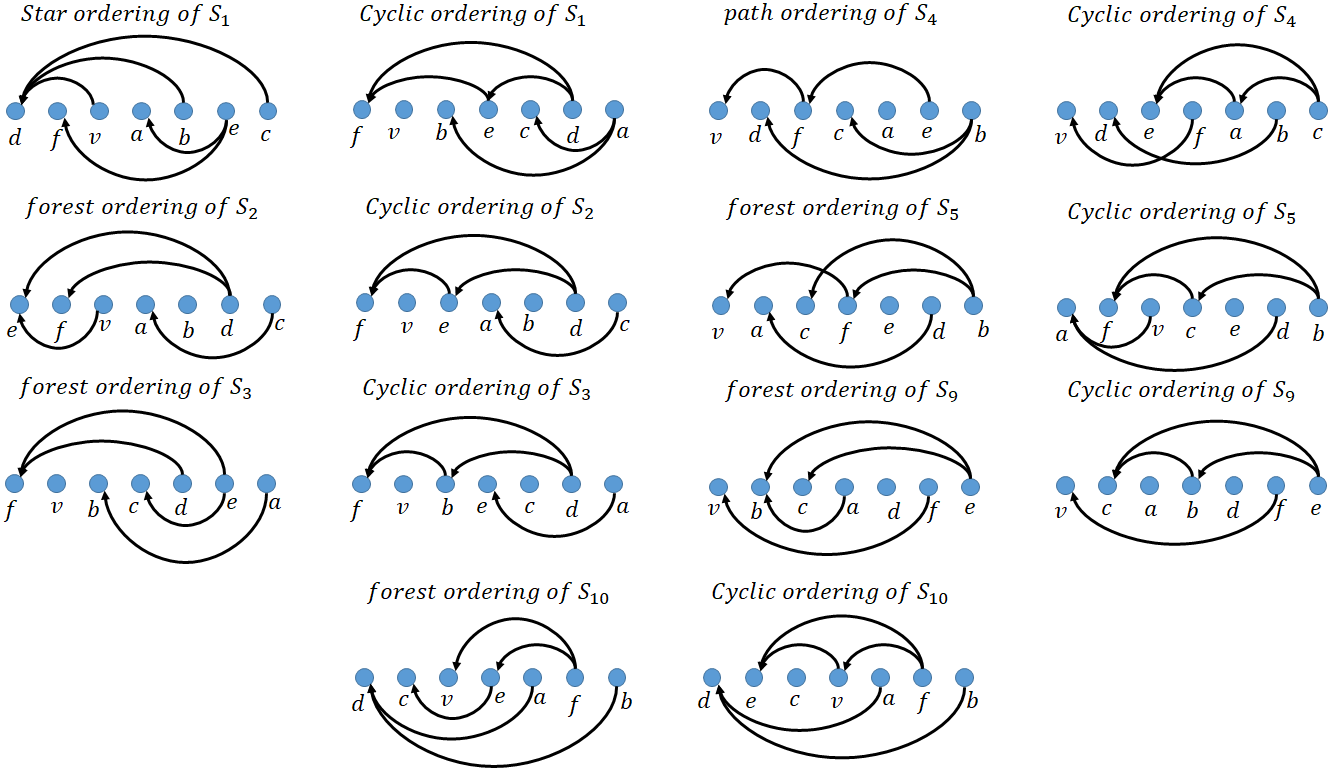}
	\caption{Crucial orderings of the vertices of the tournaments $S_{1},S_{2},S_{3},S_{4},S_{5},S_{9},$ and $S_{10}$. All the non drawn arcs are forward.}
	\label{fig:orderingsS}
\end{figure}
For each of the following tournaments we will define two special orderings of their vertex set $\theta_{f_{i}}$ and $\theta_{C_{i}}$. $\theta_{f_{i}}$ is called the \textit{forest ordering of} $H_{i}$ and $\theta_{C_{i}}$ is called the \textit{cyclic ordering of} $H_{i}$ (see Figure \ref{fig:orderingsH}). 
\begin{itemize}[-]
\item $\theta_{f_{18}} := (v_{7},v_{1},v_{2},v_{3},v_{4},v_{6},v_{5})$, $E(\theta_{f_{18}})= \lbrace (v_{5},v_{2}),(v_{5},v_{1}),(v_{4},v_{1}),(v_{3},v_{7}),(v_{6},v_{3}) \rbrace $.\\ $\theta_{C_{18}} := (v_{7},v_{2},v_{4},v_{1},v_{6},v_{3},v_{5})$, $E(\theta_{C_{18}})= \lbrace (v_{1},v_{2}),(v_{5},v_{2}),(v_{5},v_{1}),(v_{3},v_{7}),(v_{3},v_{4}) \rbrace $.
\item $\theta_{f_{37}} := (v_{1},v_{2},v_{7},v_{3},v_{4},v_{6},v_{5})$, $E(\theta_{f_{37}})= \lbrace (v_{5},v_{2}),(v_{5},v_{1}),(v_{4},v_{1}),(v_{6},v_{7}) \rbrace $.\\ $\theta_{C_{37}} := (v_{2},v_{4},v_{1},v_{3},v_{6},v_{7},v_{5})$, $E(\theta_{C_{37}})= \lbrace (v_{1},v_{2}),(v_{5},v_{2}),(v_{5},v_{1}),(v_{7},v_{3}),(v_{7},v_{4}),(v_{3},v_{4}) \rbrace $.
\item $\theta_{f_{3}} := (v_{3},v_{4},v_{7},v_{5},v_{1},v_{2},v_{6})$, $E(\theta_{f_{3}})= \lbrace (v_{1},v_{3}),(v_{2},v_{4}),(v_{6},v_{5}),(v_{2},v_{3}) \rbrace $.\\ $\theta_{C_{3}} := (v_{7},v_{2},v_{4},v_{1},v_{3},v_{6},v_{5})$, $E(\theta_{C_{3}})= \lbrace (v_{1},v_{2}),(v_{5},v_{2}),(v_{5},v_{1}),(v_{3},v_{7}),(v_{4},v_{7}),(v_{3},v_{4}) \rbrace $.
\item $\theta_{f_{8}} := (v_{1},v_{2},v_{3},v_{4},v_{6},v_{5},v_{7})$, $E(\theta_{f_{8}})= \lbrace (v_{5},v_{2}),(v_{5},v_{1}),(v_{4},v_{1}),(v_{7},v_{6}),(v_{6},v_{3}) \rbrace $.\\ $\theta_{C_{8}} := (v_{2},v_{4},v_{1},v_{6},v_{3},v_{5},v_{7})$, $E(\theta_{C_{8}})= \lbrace (v_{1},v_{2}),(v_{5},v_{2}),(v_{5},v_{1}),(v_{7},v_{6}),(v_{3},v_{4}) \rbrace $.
\item $\theta_{f_{20}} := (v_{7},v_{1},v_{2},v_{3},v_{4},v_{6},v_{5})$, $E(\theta_{f_{20}})= \lbrace (v_{5},v_{2}),(v_{5},v_{1}),(v_{4},v_{1}),(v_{6},v_{7}),(v_{6},v_{3}) \rbrace $.\\ $\theta_{C_{20}} := (v_{7},v_{2},v_{4},v_{1},v_{6},v_{3},v_{5})$, $E(\theta_{C_{20}})= \lbrace (v_{1},v_{2}),(v_{5},v_{2}),(v_{5},v_{1}),(v_{6},v_{7}),(v_{3},v_{4}) \rbrace $.
\item $\theta_{f_{2}} := (v_{3},v_{7},v_{4},v_{5},v_{1},v_{2},v_{6})$, $E(\theta_{f_{2}})= \lbrace (v_{1},v_{3}),(v_{2},v_{4}),(v_{6},v_{5}),(v_{2},v_{3}) \rbrace $.\\ $\theta_{C_{2}} := (v_{7},v_{2},v_{4},v_{1},v_{3},v_{6},v_{5})$, $E(\theta_{C_{2}})= \lbrace (v_{1},v_{2}),(v_{5},v_{2}),(v_{5},v_{1}),(v_{3},v_{7}),(v_{3},v_{4}) \rbrace $.
\item $\theta_{f_{26}} := (v_{1},v_{2},v_{3},v_{7},v_{4},v_{6},v_{5})$, $E(\theta_{f_{26}})= \lbrace (v_{5},v_{2}),(v_{5},v_{1}),(v_{4},v_{1}),(v_{6},v_{7}) \rbrace $.\\ $\theta_{C_{26}} := (v_{2},v_{4},v_{1},v_{3},v_{6},v_{7},v_{5})$, $E(\theta_{C_{26}})= \lbrace (v_{1},v_{2}),(v_{5},v_{2}),(v_{5},v_{1}),(v_{7},v_{4}),(v_{3},v_{4}) \rbrace $.
\item $\theta_{f_{1}} := (v_{1},v_{2},v_{3},v_{4},v_{6},v_{5},v_{7})$, $E(\theta_{f_{1}})= \lbrace (v_{5},v_{2}),(v_{5},v_{1}),(v_{4},v_{1}),(v_{7},v_{6}) \rbrace $.\\ $\theta_{C_{1}} := (v_{2},v_{4},v_{1},v_{3},v_{6},v_{5},v_{7})$, $E(\theta_{C_{1}})= \lbrace (v_{1},v_{2}),(v_{5},v_{2}),(v_{5},v_{1}),(v_{7},v_{6}),(v_{3},v_{4}) \rbrace $.
\item $\theta_{f_{25}} := (v_{7},v_{1},v_{2},v_{3},v_{4},v_{6},v_{5})$, $E(\theta_{f_{25}})= \lbrace (v_{5},v_{2}),(v_{5},v_{1}),(v_{4},v_{1}),(v_{6},v_{7}) \rbrace $.\\ $\theta_{C_{25}} := (v_{7},v_{2},v_{4},v_{1},v_{3},v_{6},v_{5})$, $E(\theta_{C_{25}})= \lbrace (v_{1},v_{2}),(v_{5},v_{2}),(v_{5},v_{1}),(v_{6},v_{7}),(v_{3},v_{4}) \rbrace $.
\item $\theta_{f_{31}} := (v_{1},v_{2},v_{3},v_{4},v_{7},v_{6},v_{5})$, $E(\theta_{f_{31}})= \lbrace (v_{5},v_{2}),(v_{5},v_{1}),(v_{4},v_{1}),(v_{6},v_{3}) \rbrace $.\\ $\theta_{C_{31}} := (v_{2},v_{4},v_{1},v_{6},v_{3},v_{7}, v_{5})$, $E(\theta_{C_{31}})= \lbrace (v_{1},v_{2}),(v_{5},v_{2}),(v_{5},v_{1}),(v_{7},v_{6}),(v_{3},v_{4}) \rbrace $.
\item $\theta_{f_{12}} := (v_{1},v_{2},v_{3},v_{4},v_{6},v_{7},v_{5})$, $E(\theta_{f_{12}})= \lbrace (v_{5},v_{2}),(v_{5},v_{1}),(v_{4},v_{1}),(v_{6},v_{3}) \rbrace $.\\ $\theta_{C_{12}} := (v_{2},v_{4},v_{1},v_{6},v_{3},v_{7}, v_{5})$, $E(\theta_{C_{12}})= \lbrace (v_{1},v_{2}),(v_{5},v_{2}),(v_{5},v_{1}),(v_{3},v_{4}) \rbrace $.
\end{itemize}
\begin{figure}[h]
\centering
	\includegraphics[scale=0.8, width=16cm, height=10.2cm]{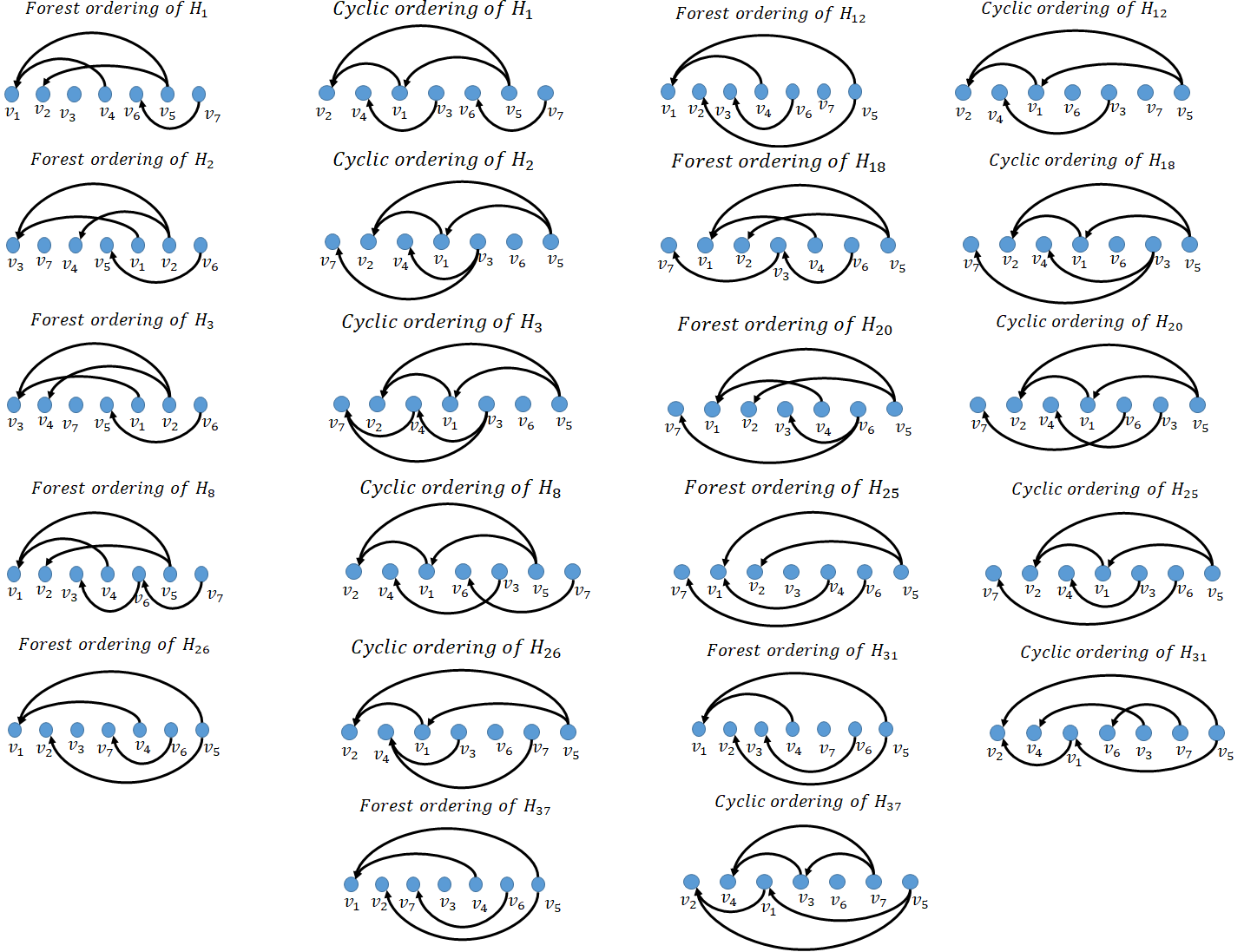}
	\caption{Crucial orderings of the vertices of the tournaments $H_{1},H_{2},H_{3},H_{8},H_{12},H_{18},H_{20},H_{25},$ $H_{26},H_{31},$ and $H_{37}$. All the non drawn arcs are forward.}
	\label{fig:orderingsH}
\end{figure}
We will define four classes of tournaments in $\mathcal{S}\cup\mathcal{H}$. \\
- $\mathcal{C}_1:=\{S_1,...,S_4,S_9,H_1,H_{12},H_{20},H_{25},H_{26},H_{31}\}$.\\
- $\mathcal{C}_2:=\{S_5,S_{10}\}$.\\
- $\mathcal{C}_3:=\{H_3,H_{37}\}$.\\
- $\mathcal{C}_4:=\{H_2,H_8,H_{18}\}$. 
\begin{lemma}\label{C1} 
 Let $\tilde{c}>0$, $0 < \tilde{\lambda} < \frac{1}{3(38)^{21}}$ be constants, let $\epsilon >0$ be small enough, and $w=(1,0,1,0,0,0,0,1,0,$ $0,0,0,1,0)$. Let $\overline{c}=\frac{13\tilde{c}}{12},\overline{\lambda}=\frac{12\tilde{\lambda}}{13}$. Let $(W_{1},...,W_{14})$ be a smooth $(\overline{c},\overline{\lambda} ,w)$$-$structure of an $n$-vertex $\epsilon$$-$critical tournament $T$, such that $\mid$$W_8$$\mid$ is divisible by $13$. Then for all $H\in \mathcal{C}_1$, $T$ contains $H$. \end{lemma}
 \begin{proof}
Let $\hat{W}_8 =W_8-W_8^6$, and let $\chi =(W_1,...,W_4,W_6,W_7,\hat{W}_8,W_9,W_{10},W_{12},W_{13},W_{14})=(A_{1},...,A_{12})$. Clearly $\chi$ is a smooth $(\tilde{c},\tilde{\lambda},\tilde{w})$-structure, where $\tilde{w}=(1,0,1,0,0,0,1,0,$ $0,0,1,0)$. Then by Lemma \ref{path-galaxy}, $\widehat{\mathcal{K}}_1$ is well-contained in $\chi$. So there exist vertices $x_i\in W_i$ for $i=1,...,4,6,7,9,10,12,13,14$, and there exist vertices $x_8^i\in W_8^i$ for $i=1,...,5,7,...,13$, such that $T$$\mid$$\{x_1,...,x_4,x_6,x_7,x_8^1,...,x_8^5,x_8^7,...,x_8^{13},x_9,x_{10},x_{12},x_{13},x_{14}\}$ contains $\widehat{\mathcal{K}}_1$ and $(x_1,...,x_4,x_6,x_7,x_8^1,...,x_8^5,x_8^7,...,x_8^{13},x_9,x_{10},x_{12},x_{13},x_{14})$ is the forest ordering of $\widehat{\mathcal{K}}_1$. Denote this copy of $\widehat{\mathcal{K}}_1$ by $X$. Let $W_{i}^{*} = \displaystyle{\bigcap_{p\in V(X)}W_{i,p}}$ for $i=5,11$, and let $W_{8}^{6*} = \displaystyle{\bigcap_{p\in V(\tilde{X})}W^6_{8,p}}$, where $\tilde{X}=\{x_1,...,x_4,x_6,x_7,x_9,x_{10},$ $x_{12},x_{13},x_{14}\}$.  Then by Lemma \ref{u}, $\mid$$W_{8}^{6*}$$\mid$ $\geq \frac{1-143\tilde{\lambda}}{13}\overline{c}tr(T)\geq  \frac{\overline{c}}{26}tr(T)$ since $\overline{\lambda} \leq \frac{1}{286}$, and $\mid$$W_{i}^{*}$$\mid$ $\geq (1-23\overline{\lambda})\mid$$W_{i}$$\mid$ $\geq \frac{1}{2}\mid$$W_{i}$$\mid$ $\geq \frac{\overline{c}}{2}n$ for $i=5,11$, since $\overline{\lambda} \leq \frac{1}{46}$. Now since we can assume that $\epsilon < log_{\frac{\overline{c}}{2}}(1-\frac{\overline{c}}{104})$, then by Lemma \ref{w}, there exist vertices $x_5\in W_5^*$, $x_{11}\in W_{11}^*$, and $x_8^6\in W_8^{6*}$, such that $x_5\leftarrow x_8^6\leftarrow x_{11}$.

Consider first the tournament $T$$\mid$$\{x_2,x_6,x_8^1,x_8^7,x_8^8,x_8^9,x_8^{11},x_8^{13},x_{10},x_{13},x_{14}\}$, denoted by $F_1$. We are going to prove that $F_1$ contains $S_1$. If $x_8^8\leftarrow x_{13}$ or $x_8^{11}\leftarrow x_{13}$, then $(x_2,x_6,x_8^1,x,x_8^{13},x_{13},x_{14})$ is the cyclic ordering of $S_1$, where $x\in\{ x_8^8,x_8^{11}\}$. And if $x_8^8\rightarrow x_{13}$ or $x_8^{11}\rightarrow x_{13}$, then $(x_2,x_8^7,x_8^8,x_8^9,x_8^{11},x_{10},x_{13})$ is the star ordering of $S_1$. Now consider the tournament $T$$\mid$$\{x_2,x_4,x_7,x_8^1,...,x_8^5,x_8^9,...,x_8^{13},x_9,x_{10},x_{12},x_{14}\}$, denoted by $F_2$. We will prove that for all $H\in \{S_2,S_3,S_9,H_1,H_{26},H_{12},H_{25},H_{31}\}$, $F_2$ contains $H$. Assume first that $x_4\rightarrow x_8^2$ and $x_8^5\rightarrow x_{12}$. Then  $(x_4,x_8^2,x_8^5,x_8^{13},x_9,x_{12},x_{14})$, $(x_2,x_4,x_8^2,x_8^5,x_8^9,x_8^{11},x_{12})$, $(x_4,x_7,x_8^1,x_8^2,x_8^5,x_{12},x_{14})$, $(x_4,x_8^2,x_8^4,x_8^{5},x_9,x_{10},x_{12})$, $(x_4,x_8^2,x_8^3,x_8^5,x_8^{13},x_{12},x_{14})$, $(x_2,x_4,x_8^2,x_8^3,x_8^5,x_8^{11},$ $x_{12})$, $(x_4,x_8^2,...,x_8^5,x_{9},x_{12})$, $(x_4,x_8^2,x_8^4,x_8^5,x_8^{9},x_{9},x_{12})$ is the forest ordering of $S_2$, $S_9$, $S_3$, $H_{12}$, $H_1$, $H_{25}$, $H_{26}$, $H_{31}$ respectively. Now assume that $x_4\leftarrow x_8^2$ or $x_8^5\leftarrow x_{12}$. Then $(x_4,x_7,x,x_8^{13},x_9,x_{12},x_{14})$, $(x_4,x_7,x_8^1,x,x_9,$ $x_{12},x_{14})$, $(x_2,x_4,x_7,x,x_8^9,x_8^{11},x_{12})$, $(x_4,x_7,x,x_8^{10},x_8^{13},x_{12},x_{14})$, $(x_4,x_7,x,x_8^{9},x_8^{10},x_{9},x_{12})$, $(x_2,x_4,x_7,x,x_8^{10}$ $,x_8^{11},x_{12})$, $(x_4,x_7,x,x_8^{10},x_8^{11},x_8^{12},x_{12})$  $(x_4,x_7,x,x_8^{9},x_8^{10},x_{10},x_{12})$ is the cyclic ordering of $S_2$, $S_3$, $S_9$, $H_{1}$, $H_{12}$, $H_{25}$, $H_{26}$, $H_{31}$ respectively, where $x\in\{x_8^2,x_8^5\}$. So, for all $H\in \{S_2,S_3,S_9,H_1,H_{26},H_{12},H_{25},H_{31}\}$, $F_2$ contains $H$. To complete the proof, we will prove that $T$ contains $S_4$ and $H_{20}$. Denote by $F_3$ the tournament $T$$\mid$$\{x_1,x_2,x_5,x_6,x_8^1,x_8^6,x_8^{11},x_8^{13},x_{10},x_{11},x_{14}\}$. We have two cases. Either $x_5\rightarrow x_{11}$, or $x_5\leftarrow x_{11}$. If the former holds, $(x_5,x_8^1,x_8^6,x_8^{13},x_{10},x_{11},x_{14})$ is the path ordering of $S_4$. And if the latter holds, $(x_1,x_2,x_5,x_6,x_8^6,x_8^{11},x_{11})$ is the cyclic ordering of $S_4$. So $F_3$ contains $S_4$. Finally let $F_4:= T$$\mid$$\{x_2,x_3,x_4,x_7,x_8^2,x_8^4,x_8^{5},x_8^8,x_8^{12},x_{9},x_{12}\}$. If $x_3\rightarrow x_8^4$,     $x_4\rightarrow x_8^2$, and $x_8^5\rightarrow x_{12}$, then $(x_3,x_4,x_8^2,x_8^4,x_8^5,x_9,$ $x_{12})$ is the forest ordering of $H_{20}$. Else $(x_2,x_3,x_7,x_8^4,x_8^8,x_8^{12},x_9)$ or $(x_2,x_4,x_7,x,x_8^8,x_8^{12},x_{12})$ is the cyclic ordering of $H_{20}$, where $x\in \{x_8^2,x_8^5\}$. All the above discussion implies that  for all $H\in \{S_1,...,S_4,S_9,H_1,H_{12},$ $H_{20},H_{25},H_{26},H_{31}\}$, $T$ contains $H$. This terminates the proof. $\hfill { \square }$
 \end{proof}
 \begin{lemma}\label{C2} 
 Let $\tilde{c}>0$, $0 < \tilde{\lambda} < \frac{1}{3(26)^{15}}$ be constants, let $\epsilon >0$ be small enough, and let $w$ be a $\{0,1\}$-vector. Let $\chi =(W_{1},...,W_{9})$ be a smooth $(\tilde{c},\tilde{\lambda} ,w)$$-$structure of an $n$-vertex $\epsilon$$-$critical tournament $T$ corresponding to $\mathcal{K}_2$ under $\theta_{\mathcal{K}_2}$. Then for all $H\in \mathcal{C}_2$, $T$ contains $H$. \end{lemma}
 \begin{proof}
 By Lemma \ref{path-galaxy}, $\widehat{\mathcal{K}}_2$ is well-contained in $\chi$. Let  $(x_1,x_2,x_3^1,...,x_3^7,x_4,...,x_8,x_9^1,x_9^2,x_9^{3})$ be the forest ordering of the well-contained copy of $\widehat{\mathcal{K}}_2$. If $x_3^5\leftarrow x_6$, then $(x_1,x_2,x_3^4,...,x_3^7,x_6)$ is the cyclic ordering of $S_5$. Let's assume now that $(x_3^5, x_6)$ is an arc of $T$. Either $(x_3^3,x_4)$ and $(x_3^1,x_8)$ are arcs of $T$, or there exists $x\in \{x_3^1,x_3^3\}$ such that $x\leftarrow \{x_4,x_8\}$. If the former holds, $(x_3^1,x_3^2,x_3^3,x_4,x_6,x_7,x_8)$ is the forest ordering of $S_5$. Otherwise, the latter holds, and so $(x_2,x,x_3^5,x_4,x_5,x_6,x_8)$ is the cyclic ordering of $S_5$. Hence, $T$ contains $S_5$. To complete the proof, we will prove that $T$ contains $S_{10}$. If $x_6\leftarrow x_9^2$, then $(x_1,x_2,x_3^6,x_6,x_9^1,x_9^2,x_9^3)$ is the cyclic ordering of $S_{10}$, and so we are done. So let's assume that $(x_6,x_9^2)$ is an arc of $T$. We will discuss according to the orientation of the arcs of $T$$\mid$$\{x_3^1,x_3^3,x_4,x_8\}$. If  $(x_3^3,x_4)$ and $(x_3^1,x_8)$ are arcs of $T$, then $(x_2,x_3^1,x_3^3,x_4,x_6,x_8,x_9^2)$ is the forest ordering of $S_{10}$. Otherwise, there exists $x\in \{x_3^1,x_3^3\}$ such that $x\leftarrow \{x_4,x_8\}$, and so $(x_2,x,x_3^6,x_4,x_6,x_8,x_9^2)$ is the cyclic ordering of $S_{10}$. Hence $T$ contains $S_{10}$. This completes the proof. $\hfill { \square }$  
\end{proof}
\begin{lemma}\label{H37H3} 
 Let $\tilde{c}>0$, $0 < \tilde{\lambda} < \frac{1}{3(16)^{10}}$ be constants, let $\epsilon >0$ be small enough, and $w=(0,0,0,1,0,0,0,0,0)$. Let $\overline{c}=\frac{3\tilde{c}}{2},\overline{\lambda}=\frac{2\tilde{\lambda}}{3}$. Let $(W_{1},...,W_{9})$ be a smooth $(\overline{c},\overline{\lambda} ,w)$$-$structure of an $n$-vertex $\epsilon$$-$critical tournament $T$, such that $\mid$$W_4$$\mid$ is divisible by $12$. Then for all $H\in \mathcal{C}_3$, $T$ contains $H$. \end{lemma}
 \begin{proof}
We are going to prove that $T$ contains a digraph $D$, obtained by merging $\widehat{\mathcal{K}}_3$ and two special substructures. Then we will prove that the tournament induced by $T$$\mid$$V(D)$ contains $H_3$ and $H_{37}$. Denote by  $\hat{W}_4$, the set $W_4^2\cup W_4^3\cup W_4^6\cup W_4^7\cup (\bigcup_{i=9}^{12}W_4^i) $. Since $\chi =(W_3,\hat{W}_4,W_6,W_{7},W_{9})=(F_1,...,F_5)$ is a smooth $(\tilde{c},\tilde{\lambda},\tilde{w})$-structure corresponding to $\mathcal{K}_3$ under $\theta_{\mathcal{K}_3}$, then by Lemma \ref{path-galaxy}, $\widehat{\mathcal{K}}_3$ is well-contained in $\chi$. So there exist vertices $x_i\in W_i$ for $i=3,6,7,9$, and there exist vertices $x_4^i\in W_4^i$ for $i=2,3,6,7,9,...,12$, such that $(x_3,x_4^2,x_4^3,x_4^6,x_4^7,x_4^9,...,x_4^{12},x_6,x_7, x_9)$ is the forest ordering of $\widehat{\mathcal{K}}_3$. Denote this copy of $\widehat{\mathcal{K}}_3$ by $X$. Let $W_{i}^{*} = \displaystyle{\bigcap_{p\in V(X)}W_{i,p}}$ for $i=1,2,5,8$, and let $W_{4}^{i*} = \displaystyle{\bigcap_{p\in V(\tilde{X})}W^i_{4,p}}$ for $i=1,4,5,8$, where $\tilde{X}=\{x_3,x_6,x_7,x_9\}$.  Then by Lemma \ref{u}, for all  $i\in\{1,4,5,8\}$, $\mid$$W_{4}^{i*}$$\mid$ $\geq \frac{1-48\tilde{\lambda}}{12}\overline{c}tr(T)\geq  \frac{\overline{c}}{24}tr(T)$ since $\overline{\lambda} \leq \frac{1}{96}$, and $\mid$$W_{i}^{*}$$\mid$ $\geq (1-12\overline{\lambda})\mid$$W_{i}$$\mid$ $\geq \frac{1}{2}\mid$$W_{i}$$\mid$ $\geq \frac{\overline{c}}{2}n$ for $i=1,2,5,8$, since $\overline{\lambda} \leq \frac{1}{24}$. Now since we can assume that $\epsilon < min \{log_{\frac{\overline{c}}{4}}(1-\frac{\overline{c}}{24}),log_{\frac{\overline{c}}{8}}(\frac{1}{2})\}$, then by Lemma \ref{g}, there exist vertices $x_i\in W_i^*$ for $i=1,2,5,8$, and there exist vertices $x_{4}^i\in W_{4}^{i*}$ for $i=1,4,5,8$, such that $\{x_1,x_4^1\}\leftarrow x_{8}$, $x_1\leftarrow x_4^4$, $\{x_2,x_4^5\}\leftarrow x_{5}$, and $x_2\leftarrow x_4^8$. $D$ is the digraph with vertex set $V(X)\cup \{x_1,x_2,x_4^1, x_4^4,x_4^5,x_4^8, x_{5},x_8\}$, and arc set  $E(X)\cup \{(x_8,x_1),(x_8,x_4^1),(x_4^4,x_1),(x_5,x_2),(x_5,x_4^5),(x_4^8,x_2)\}$. Notice that if $(x_3,x_4^9)$ and $(x_4^{12},x_7)$ are arcs of $T$, then $(x_3,x_4^9,...,x_4^{12},x_7,x_9)$ and $(x_3,x_4^9,...,x_4^{12},x_6,x_7)$ is the forest ordering of $H_3$ and $H_{37}$ respectively. So $T$ contains $H_3$ and $H_{37}$, and we are done. So let's assume now that $(x_4^9,x_3)$ or $(x_7,x_4^{12})$ is an arc of $T$. Then there exists $x\in \{x_4^9,x_4^{12}\}$, such that $x_3\leftarrow x\leftarrow x_7$. Now we will discuss according to the orientation of the edges $x_2x_4^5$ and $x_4^8x_5$ to find a copy of $H_3$ in $T$. Either $(x_2,x_4^5)$ and $(x_4^8,x_5)$ are arcs of $T$, and so $(x_2,x_4^5,...,x_4^{8},x_5,x_9)$ is the forest ordering of $H_3$, or there exists $r\in \{x_4^5,x_4^{8}\}$, such that $x_2\leftarrow r\leftarrow x_5$, and so $(x_2,x_3,r,x,x_5,x_6,x_7)$ is the cyclic ordering of $H_3$. Hence $T$ contains $H_3$.  To complete the proof, we will study all the possible  orientations of the edges $x_1x_4^1$ and $x_4^4x_8$ and confirm the existence of  $H_{37}$ in $T$ as a subtournament. If $x_1\rightarrow x_4^1$ and $x_4^4\rightarrow x_8$, then $(x_1,x_4^1,...,x_4^{4},x_6,x_8)$ is the forest ordering of $H_{37}$. Otherwise, there exists $u\in \{x_4^1,x_4^4\}$, such that $x_1\leftarrow u\leftarrow x_8$. Then $(x_1,x_3,u,x,x_6,x_7,x_8)$ is the cyclic ordering of $H_{37}$. Hence $T$ contains $H_{37}$. This confirms our lemma. $\hfill { \square }$
 \end{proof}
\begin{lemma}\label{C4} 
 Let $\tilde{c}>0$, $0 < \tilde{\lambda} < \frac{1}{3(24)^{14}}$ be constants, let $\epsilon >0$ be small enough, and $w=(1,0,0,1,0,1,0,0,0$ $,0,1,0,0)$. Let $\overline{c}=\frac{3\tilde{c}}{2},\overline{\lambda}=\frac{2\tilde{\lambda}}{3}$. Let $(W_{1},...,W_{14})$ be a smooth $(\overline{c},\overline{\lambda} ,w)$$-$structure of an $n$-vertex $\epsilon$$-$critical tournament $T$, such that $\mid$$W_4$$\mid$ is divisible by $6$. Then for all $H\in \mathcal{C}_4$, $T$ contains $H$. \end{lemma}
\begin{proof}
Let $w^*:=(1,0,1,0,1,0,0,1,0,1,0)$ be a $\{0,1\}$-vector. Clearly, $\chi =(W_1,W_3,W_4^1\cup W_4^2\cup W_4^5\cup W_4^6,W_5,W_6,W_7,W_9,W_{11},W_{12},W_{14})$ is a smooth $(\tilde{c},\tilde{\lambda},w^*)$-structure corresponding to $\mathcal{K}_4$ under $\theta_{\mathcal{K}_4}$. Then by Lemma \ref{path-galaxy}, $\widehat{\mathcal{K}}_4$ is well-contained in $\chi$. Let $(x_1^1,x_1^2,x_1^3,x_3,x_4^1,x_4^2,x_4^5,x_4^6,x_5,x_6,x_7,x_9,x_{11}^1,x_{11}^2,x_{12},x_{14})$ be the forest ordering of the well-contained copy of $\widehat{\mathcal{K}}_4$. Denote this copy by $X$. Since we can assume that $\epsilon < min \{log_{\frac{\overline{c}}{8}}(1-\frac{\overline{c}}{12}),log_{\frac{\overline{c}}{8}}(1-\frac{\overline{c}}{2}),log_{\frac{\overline{c}}{2}}(1-\frac{\overline{c}}{48})\}$ and since $\overline{\lambda} \leq \frac{1}{144}$, then Lemmas \ref{u}, \ref{w} and \ref{x} implies that there exist vertices $x_i\in \bigcap_{p\in V(X)}W_{i,p}$ for $i=2,8,10,13$, and there exist vertices $x_{4}^i\in \bigcap_{p\in V(\tilde{X})}W^i_{4,p}$ for $i=3,4$, such that $x_2\leftarrow x^3_{4}\leftarrow x_8$ and $x_4^4\leftarrow x_{10}\leftarrow x_{13}$, where $\tilde{X}=X-\{x_4^1,x_4^2,x_4^5,x_4^6\}$.  We will prove first that $T$ contains $H_8$. If $x_3\rightarrow x_4^2$, $x_4^6\rightarrow  x_{12}$, and $x^4_4\rightarrow x_{13}$, then $(x_3,x_4^2,x_4^4,x_4^6,x_{10},x_{12},x_{13})$ is the forest ordering of $H_8$. Otherwise, $(x_{13},x_4^4)$ is an arc of $T$, or there exists $u\in \{x_4^2,x_4^6\}$, such that $x_3\leftarrow u\leftarrow x_{12}$. So, $(x_4^4,x_9,x_{10},x_{11}^1,x_{11}^2,x_{13},x_{14})$ or $(x_3,x_4^1,u,x_6,x_7,x_{12},x_{14})$ is the cyclic ordering of $H_8$. Hence $T$ contains $H_8$. To complete the proof, it remains to prove that $T$ contains $H_2$ and $H_{18}$. To this end, we will consider two cases: Either $x_1^2\leftarrow x_4^1$, or $x_1^2\rightarrow x_4^1$. Assume first that the former holds. Then $(x_1^1,x_1^2,x_1^3,x_4^1,x_4^5,x_5,x_7)$ and $(x_1^1,x_1^2,x_1^3,x_4^1,x_5,x_6,x_7)$ is the cyclic ordering of $H_{18}$ and $H_{2}$ respectively. Then $T$ contains $H_2$ and $H_{18}$, and we are done. Let's assume now that the latter holds. If there exists $u\in \{x_4^2,x_4^6\}$ such that $x_3\leftarrow u\leftarrow x_{12}$, then $(x_1^2,x_3,x_4^1,u,x_7,x_8,x_{12})$ and $(x_1^2,x_3,x_4^1,u,x_6,x_7,x_{12})$ is the cyclic ordering of $H_{2}$ and $H_{18}$ respectively. Otherwise, $x_3\rightarrow x_4^2$ and $x_4^6\rightarrow  x_{12}$. In this case $(x_3,x_4^1,x_4^2,x_4^5,x_4^6,x_{12},x_{14})$ is the forest ordering of $H_2$. So, in all possible cases $T$ contains $H_2$. It remains to prove that $T$ contains $H_{18}$ in case $x_1^2\rightarrow x_4^1$, $x_3\rightarrow x_4^2$, and $x_4^6\rightarrow  x_{12}$. If $(x_2,x_8)$ is an arc of $T$, then $(x_2,x_3,x_4^2,x_4^3,x_4^6,x_8,x_{12})$ is the forest ordering of $H_{18}$. Otherwise, $(x_8,x_2)$ is an arc of $T$, and so $(x_1^2,x_2,x_4^1,x_4^3,x_4^5,x_7,x_8)$ is the cyclic ordering of $H_{18}$, and we are done. This ends the proof. $\hfill { \square }$
\end{proof}
\vspace{3mm}\\ If there exists a smooth $(c,\lambda,w)$-structure $\Sigma$ as in Lemma \ref{C1} (resp. \ref{C2})(resp. \ref{H37H3})(resp. \ref{C4}), then we say that $\Sigma$ \textit{corresponds to} $\mathcal{C}_1$ (resp. $\mathcal{C}_2$) (resp. $\mathcal{C}_3$) (resp. $\mathcal{C}_4$). We are ready to state the main theorem in this section and prove that every tournament in $\mathcal{C}:=\bigcup_{i=1}^4\mathcal{C}_i$ satisfies EHC.
\begin{theorem}
Every tournament in $\mathcal{C}$ satisfies the Erd\"{o}s-Hajnal conjecture.
\end{theorem}
\begin{proof}
Let $H\in \mathcal{C}$. Then $H\in \mathcal{C}_i$, for some $i\in \{1,...,4\}$. Assume to that contrary that $H$ does not satify EHC. Take $\epsilon > 0$ small enough. Then there exists an $H$$-$free tournament $T'$ such that $tr(T') < \mid$$T'$$\mid^{\epsilon}$. Let $ T $ be the smallest $ H - $free tournament on $ n $ vertices such that $ tr(T) < \mid $$T$$ \mid^{\epsilon} $. Then $ T $ is $ \epsilon - $critical. By Corollary \ref{i}, $ T $ contains a $ (c,\lambda, w)- $smooth structure corresponding to $\mathcal{C}_i$, for some $c>0, \lambda >0$ small enough. Now Lemma \ref{C1}, Lemma \ref{C2}, Lemma \ref{H37H3}, or Lemma \ref{C4} implies that $T$ contains $H$, a contradiction. This completes the proof. $\hfill {\square }$
\end{proof}   
\section{The analogous conjecture of Erd\"{o}s and Hajnal}
Instead of forbidding just one tournament, one can state the analogous conjecture where we forbid two or more tournaments. In this section we prove the Erd\"{o}s-Hajnal conjecture for several couples and triplets of tournaments. 
\subsection{Forbidding two tournaments}
For each of the following tournaments we will define some special orderings of their vertex set and the set of backward arcs under each ordering (see Figure \ref{fig:orderingsC}).
\begin{itemize}[-]
\item \textit{Path ordering of $S_{7}$}, $\theta_{P_{7}} := (v,a,b,f,e,d,c)$, $E(\theta_{P_{7}}) = \lbrace (c,f),(f,v),(d,b),(e,a) \rbrace $ .
\item \textit{Forest ordering of $S_{8}$}, $\theta_{f_{8}} := (v,c,a,f,e,d,b)$, $E(\theta_{f_{8}}) = \lbrace (b,f),(f,v),(d,a),(b,c) \rbrace $ .
\item \textit{Cyclic ordering of $S_{12}$}, $\theta_{C_{12}} := (c,a,b,e,f,d,v)$, $E(\theta_{C_{12}}) = \lbrace (d,b),(d,c),(b,c),(v,e),(v,a) \rbrace $ .
\item \textit{Forest ordering of $S_{13}$}, $\theta_{f_{13}} := (b,c,d,a,e,f,v)$, $E(\theta_{f_{13}}) = \lbrace (v,d),(e,c),(a,b),(v,e),(f,b) \rbrace $ .
\item \textit{Forest ordering of $S_{14}$}, $\theta_{f_{14}} := (b,c,d,a,v,f,e)$, $E(\theta_{f_{14}}) = \lbrace (v,d),(e,c),(a,b),(e,v),(f,b) \rbrace $ .
\item \textit{Forest ordering of $H_{15}$}, $\theta_{f_{15}} := (v_{3},v_{4},v_{5},v_{1},v_{7},v_{2},v_{6})$,\\ $E(\theta_{f_{15}})= \lbrace (v_{2},v_{3}),(v_{2},v_{4}),(v_{1},v_{3}),(v_{6},v_{5}),(v_{6},v_{7}) \rbrace $.
\item \textit{Forest ordering of $H_{47}$}, $\theta_{f_{47}} := (v_{7},v_{1},v_{2},v_{3},v_{4},v_{6},v_{5})$,\\ $E(\theta_{f_{47}})= \lbrace (v_{5},v_{1}),(v_{5},v_{2}),(v_{4},v_{1}),(v_{3},v_{7}),(v_{6},v_{7}) \rbrace $.
\item \textit{Forest ordering of $H_{6}$}, $\theta_{f_{6}} := (v_{1},v_{7},v_{2},v_{3},v_{4},v_{6},v_{5})$, $E(\theta_{f_{6}})= \lbrace (v_{5},v_{2}),(v_{5},v_{1}),(v_{4},v_{1}) \rbrace $.
\item \textit{Cyclic ordering of $H_{10}$}, $\theta_{C_{10}} := (v_{2},v_{7}, v_{4},v_{1},v_{6},v_{3}, v_{5})$, $E(\theta_{C_{10}})= \lbrace (v_{1},v_{2}),(v_{5},v_{2}),(v_{5},v_{1}),(v_{3},v_{4}) \rbrace $.
\item \textit{Cyclic ordering of $H_{38}$}, $\theta_{C_{38}} := (v_{2},v_{4},v_{1},v_{3},v_{6},v_{5},v_{7})$,\\ $E(\theta_{C_{38}})= \lbrace (v_{1},v_{2}),(v_{5},v_{2}),(v_{5},v_{1}),(v_{7},v_{3}),(v_{3},v_{4}),(v_{7},v_{4}) \rbrace $.
\item \textit{Cyclic ordering of $H_{45}$}, $\theta_{C_{45}} := (v_{2},v_{7},v_{4},v_{1},v_{3},v_{6},v_{5})$,\\ $E(\theta_{C_{45}})= \lbrace (v_{1},v_{2}),(v_{5},v_{2}),(v_{5},v_{1}),(v_{3},v_{4}),(v_{6},v_{7}) \rbrace $.
\item \textit{Cyclic ordering of $H_{27}$}, $\theta_{C_{27}} := (v_{2},v_{4},v_{7},v_{1},v_{6},v_{3},v_{5})$, $E(\theta_{C_{27}})= \lbrace (v_{1},v_{2}),(v_{5},v_{2}),(v_{5},v_{1}),(v_{3},v_{4}) \rbrace $.
\item \textit{Cyclic ordering of $H_{44}$}, $\theta_{C_{44}}$$ := $$(v_{2},v_{7},v_{4},v_{1},v_{6},v_{3},v_{5})$,\\ $E(\theta_{C_{44}})$$=$$ \lbrace (v_{1},v_{2}),(v_{5},v_{2}),(v_{5},v_{1}),(v_{3},v_{4}),(v_{6},v_{7}) \rbrace $.
\item \textit{Cyclic ordering of $H_{4}$}, $\theta_{C_{4}}$$ := $$(v_{2},v_{7},v_{4},v_{1},v_{3},v_{6},v_{5})$, $E(\theta_{C_{4}})$$=$$ \lbrace (v_{1},v_{2}),(v_{5},v_{2}),(v_{5},v_{1}),(v_{3},v_{4}) \rbrace $.
\item \textit{Cyclic ordering of $H_{34}$}, $\theta_{C_{34}}$$ := $$(v_{2},v_{4},v_{1},v_{6},v_{3},v_{5},v_{7})$,\\ $E(\theta_{C_{34}})$$=$$ \lbrace (v_{1},v_{2}),(v_{5},v_{2}),(v_{5},v_{1}),(v_{3},v_{4}),(v_{7},v_{3}),(v_{7},v_{4}) \rbrace $.
 \item \textit{Forest ordering of $H_{13}$}, $\theta_{f_{13}} := (v_{1},v_{2},v_{3},v_{4},v_{6},v_{5},v_{7})$, $E(\theta_{f_{13}})= \lbrace (v_{5},v_{1}),(v_{5},v_{2}),(v_{4},v_{1}),(v_{7},v_{3}) \rbrace $.
 \item \textit{Forest ordering of $R_{1}$}, $\theta_{R_{1}} := (v_{1},v_{2},v_{3},v_{4},v_{5},v_{6},v_{7})$, $E(\theta_{R_{1}})= \lbrace (v_{6},v_{3}),(v_{5},v_{2}),(v_{4},v_{1}),(v_{6},v_{1}) \rbrace $.
 \item \textit{Forest ordering of $R_{2}$}, $\theta_{R_{2}} := (v_{1},v_7,v_{2},v_{3},v_{4},v_{5},v_{6})$, $E(\theta_{R_{2}})= \lbrace (v_{6},v_{3}),(v_{5},v_{2}),(v_{4},v_{1}),(v_{6},v_{1}) \rbrace $.
 \item \textit{Forest ordering of $R_{3}$}, $\theta_{R_{3}} := (v_{1},v_{2},v_{3},v_{4},v_{5},v_{6},v_{7})$,\\ $E(\theta_{R_{3}})= \lbrace (v_{6},v_{3}),(v_{5},v_{2}),(v_{4},v_{1}),(v_{6},v_{1}),(v_{7},v_{2}) \rbrace $.
 \item \textit{Forest ordering of $R_{4}$}, $\theta_{R_{4}} := (v_{1},v_{2},v_7,v_{3},v_{4},v_{5},v_{6})$, $E(\theta_{R_{4}})= \lbrace (v_{6},v_{3}),(v_{5},v_{2}),(v_{4},v_{1}),(v_{6},v_{1}) \rbrace $.
\item \textit{Forest ordering of $R_{7}$}, $\theta_{R_{7}} := (v_{1},v_7,v_{2},v_{3},v_{4},v_{5},v_{6})$,\\ $E(\theta_{R_{7}})= \lbrace (v_{6},v_{3}),(v_{5},v_{2}),(v_{4},v_{1}),(v_{6},v_{1}),(v_{5},v_{7}) \rbrace $.
\end{itemize} 
 \begin{figure}[h]
 \centering
	\includegraphics[scale=0.8, width=16cm, height=7.9cm]{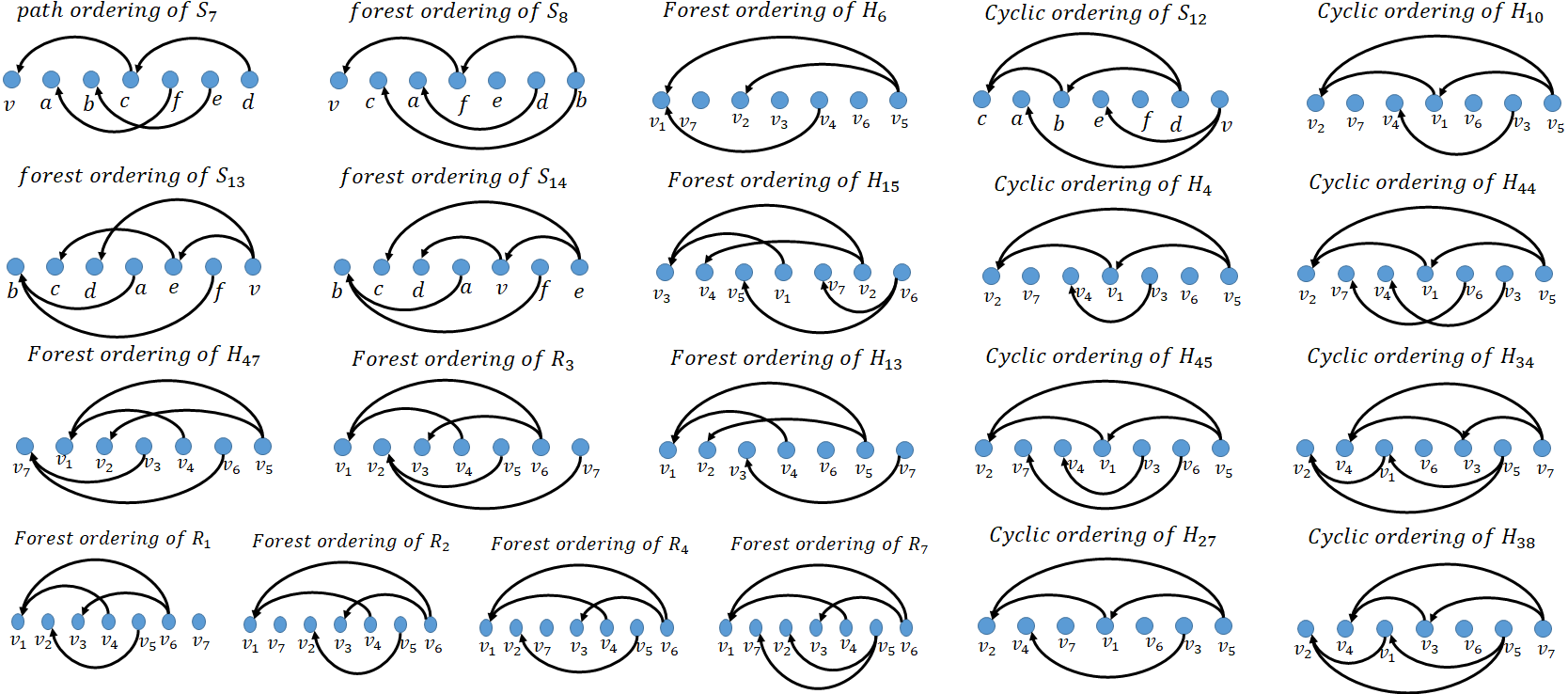}
	\caption{Crucial orderings of the vertices of some tournaments in classes $\mathcal{S}$, $\mathcal{H}$, and $\mathcal{R}$. All the non drawn arcs are forward.}
	\label{fig:orderingsC}
\end{figure}
The following are four classes of tournaments in $\mathcal{S}\cup\mathcal{H}\cup\mathcal{R}$. \\
- $\mathcal{F}_1:=\{H_{34},H_{38}\}$.\\
- $\mathcal{F}_2:=\{H_6,H_{13},H_{15},H_{47},R_1,...,R_4,R_7\}$.\\
- $\mathcal{F}_3:=\{H_4,H_{10},H_{27},H_{44},H_{45}\}$.\\
- $\mathcal{F}_4:=\{S_8,S_{13},S_{14}\}$.
\begin{lemma}\label{F1}
Let $c>0$, $0 < \lambda < \frac{1}{(10)^{7}}$ be constants, let $\epsilon >0$ be small enough, and $w=(0,0,1,0,0,0,0)$. Let $\tilde{c}=\frac{5c}{3},\tilde{\lambda}=\frac{3\lambda}{5}$. Let $(A_{1},...,A_{7})$ be a smooth $(\tilde{c},\tilde{\lambda} ,w)$$-$structure of an $n$-vertex $\epsilon$$-$critical tournament $T$, such that $\mid$$A_3$$\mid$ is divisible by $5$. Then for all $H\in\mathcal{F}_1$, $T$ contains $H$ or $T$ contains $S_7$. 
\end{lemma}
 \begin{proof}
Let $G$ be a super galaxy under an ordering  $\theta =(1,...,5)$ of its vertices, such that the connected components of $B(G,\theta)$ are the two right stars $\{2,5\}$ and $\{1,3,4\}$. Let $\hat{w}:=(1,0,0)$ be a $\{0,1\}$-vector. Clearly $\chi =(A_3^1\cup A_3^2\cup A_3^4,A_4,A_5)$ is a smooth $(c,\lambda,\hat{w})$-structure corresponding to $G$ under $\theta$. Let $\epsilon_l$ be as in Lemma  \ref{supergalaxy}. Since we can assume that $\epsilon <\epsilon_l$, then   by Lemma \ref{supergalaxy}, $\widehat{G}$ is well-contained in $\chi$. So there exist vertices $x_i\in A_i$ for $i=4,5$, and there exist vertices $x_3^i\in A_3^i$ for $i=1,2,4$, such that $(x_3^1,x_3^2,x_3^4,x_4,x_5)$ is the mutant super galaxy ordering of $\widehat{G}$. Let $X:=\{x_3^1,x_3^2,x_3^4,x_4,x_5\}$. Let $A_{i}^{*} = \displaystyle{\bigcap_{p\in X}A_{i,p}}$ for $i=2,7$, and let $A_{3}^{5*} = \displaystyle{\bigcap_{p\in \{x_4,x_5\}}A^5_{3,p}}$.  Then by Lemma \ref{u}, $\mid$$A_{i}^{*}$$\mid$ $\geq (1-5\tilde{\lambda})\mid$$A_{i}$$\mid$ $\geq \frac{1}{2}\mid$$A_{i}$$\mid$ $\geq  \frac{\tilde{c}}{2}n$ for $i=2,7$,  since $\tilde{\lambda} \leq \frac{1}{10}$, and $\mid$$A_{3}^{5*}$$\mid$ $\geq \frac{1-10\tilde{\lambda}}{5}\tilde{c}tr(T)\geq  \frac{\tilde{c}}{10}tr(T)$ since $\tilde{\lambda} \leq \frac{1}{20}$.  Now since  we can assume that $\epsilon < log_{\frac{\tilde{c}}{2}}(1-\frac{\tilde{c}}{40})$, then Lemma \ref{w}  implies that there exist vertices $x_{3}^5\in A_{3}^{5*}$, $x_i\in A_i^*$  for $i=2,7$, such that $x_2\leftarrow x^3_{5}\leftarrow x_7$. Since $\tilde{\lambda} \leq \frac{1}{40}$ and since we can assume that $\epsilon < log_{\frac{\tilde{c}}{2}}(1-\frac{\tilde{c}}{40})$, then similarly we prove that   there exist vertices $x_{3}^3\in A_{3}^{3}$, $x_i\in A_i$  for $i=1,6$, such that $x_1\leftarrow x^3_{3}\leftarrow x_6$, $x_1\rightarrow X\cup \{x_2,x_3^5,x_7\}$, $x_2\rightarrow x_3^3\rightarrow \{x_4,x_5,x_7\}$, and $X\cup \{x_2,x_3^5\}\rightarrow x_7$. We have two cases. If $(x_6,x_1)$ and $(x_7,x_2)$ are arcs of $T$, then $(x_1,x_2,x_3^3,x_3^4,x_3^5,x_6,x_7)$ and  $(x_1,x_2,x_3^3,x_3^5,x_5,x_6,x_7)$ is the cyclic ordering of $H_{34}$ and $H_{38}$ respectively. So for all $H\in\{H_{34},H_{38}\}$, $T$ contains $H$. Otherwise, $(x_1,x_3^1,x_3^2,x_3^3,x_4,x_5,x_6)$ or  $(x_2,x_3^1,x_3^2,x_3^5,x_4,x_5,x_7)$ is the path ordering of $S_{7}$. This confirms our lemma. $\hfill {\square}$
\end{proof}
\begin{lemma} \label{F2}
 Let $\tilde{c}>0$, $0 < \tilde{\lambda} < \frac{1}{3(30)^{17}}$ be constants, let $\epsilon >0$ be small enough, and let $w$ be a $\{0,1\}$-vector. Let $\chi =(W_{1},...,W_{9})$ be a smooth $(\tilde{c},\tilde{\lambda} ,w)$$-$structure of an $n$-vertex $\epsilon$$-$critical tournament $T$ corresponding to $\mathcal{K}_5$ under $\theta_{\mathcal{K}_5}$. If  $H\in \mathcal{F}_2$ and $H'\in \mathcal{F}_3$, then $T$ contains $H$ or $T$ contains $H'$. \end{lemma}
 \begin{proof}
 We are going to prove that $T$ contains $\widehat{\mathcal{K}}_5$, then we will study the orientation of some arcs in $T$$\mid$$\widehat{\mathcal{K}}_5$ to find $H$ or $H'$ in $T$ as a subtournament. Clearly, by Lemma \ref{path-galaxy}, $\widehat{\mathcal{K}}_5$ is well-contained in $\chi$. Let  $T$$\mid$$\{v_1,...,v_{17}\}$ be the copy $\widehat{\mathcal{K}}_5$ in $T$, with $(v_1,...,v_{17})$ is the forest ordering of $\widehat{\mathcal{K}}_5$.  Clearly, $\{v_4,...,v_{14}\}\subseteq W_3$, $T$$\mid$$\{v_4,...,v_{14}\}$ is a transitive tournament, and $(v_4,...,v_{14})$ is the transitive ordering of $T$$\mid$$\{v_4,...,v_{14}\}$. We will discuss according to the orientation of the edges $v_2v_8$, $v_{11}v_{17}$, and $v_{12}v_{18}$. If $(v_2,v_8)$, $(v_{11},v_{17})$, and $(v_{12},v_{18})$ are arcs of $T$, then $(v_2,v_4,v_8,v_9,v_{11},v_{13},v_{17})$, $(v_2,v_8,v_{10},v_{11},v_{15},v_{17},v_{19})$, $(v_2,v_8,v_{10},v_{11},v_{14},v_{17},v_{19})$, $(v_1,v_2,v_8,v_9,v_{11},v_{13},v_{17})$, $(v_2,v_6,v_8,v_{11},v_{15},v_{17},v_{18})$, $(v_2,v_4,v_5,v_8,v_{11},v_{16},v_{17})$, $(v_2,v_3,v_8,v_{11},v_{12},v_{17},$ 
 $v_{18})$, $(v_2,v_3,v_4,v_8,v_{11},v_{12},v_{17})$, and $(v_2,v_5,v_7,v_8,v_{11},v_{16},v_{17})$ is the forest ordering of $H_6$, $H_{13}$, $H_{15}$, $H_{47}$, $R_1,...,R_4$, and  $R_7$ respectively. So for all $H\in \{H_6,H_{13},H_{15},H_{47},R_1,...,R_4,R_7\}$, $T$ contains $H$. Otherwise, $(v_8,v_2)$ or $(v_{17},v_{11})$ or $(v_{18},v_{12})$ is an arc of $T$. Assume that $(v_8,v_2)$ is an arc of $T$ (else, the argument is similar, and we omit it).  Then $(v_2,v_4,v_6,v_8,v_{15},v_{16},v_{17})$, $(v_2,v_4,v_6,v_8,v_{12},v_{15},v_{17})$, $(v_2,v_6,v_7,v_8,v_{13},v_{15},v_{17})$, $(v_2,v_6,v_7,v_8,v_{15},v_{16},v_{17})$, and  $(v_2,v_5,v_6,v_8,v_{15},v_{16},v_{17})$ is the cyclic ordering of $H_4$, $H_{10}$, $H_{27}$, $H_{44}$, and $H_{45}$ respectively. So for all $H'\in \{H_4,H_{10},H_{27},H_{44},H_{45}\}$, $T$ contains $H'$. This confirms our lemma. $\hfill {\square}$ 
\end{proof}
\begin{lemma}\label{F4} 
Let $\epsilon >0$ be small enough, and let $w:=(0,1,0,0,1,0,1,0,0,0,0)$ be a $\{0,1\}$-vector. Let $\tilde{c}>0$, $0 < \tilde{\lambda} < \frac{1}{3(20)^{12}}$ be constants, and let $\overline{c}=\frac{7\tilde{c}}{5},\overline{\lambda}=\frac{5\tilde{\lambda}}{7}$. Let $(W_{1},...,W_{11})$ be a smooth $(\overline{c},\overline{\lambda} ,w)$$-$structure of an $n$-vertex $\epsilon$$-$critical tournament $T$, such that $\mid$$W_2$$\mid$ is divisible by $7$. Then for all $H\in \mathcal{F}_4$, $T$ contains $H$ or $T$ contains $S_{12}$. \end{lemma}
\begin{proof}
Since $T$ is $\epsilon$$-$critical and $\chi =(W_1,W_2-(W_2^3\cup W_2^4),W_3,W_5,W_6,W_7,W_9,W_{10},W_{11})$ is a smooth $(\tilde{c},\tilde{\lambda},\tilde{w})$-structure corresponding to $\mathcal{K}_6$ under $\theta_{\mathcal{K}_6}$, where $\tilde{w}=(0,1,0,1,0,1,0,0,0)$, then by Lemma \ref{path-galaxy}, $\widehat{\mathcal{K}}_6$ is well-contained in $\chi$. Let $(u_1,...,u_{15})$ be the forest ordering of the well-contained copy of $\widehat{\mathcal{K}}_6$. Let $U_1=\{u_1,..,u_{15}\}$ and $U_2=U_1-\{u_2,u_3,u_4,u_5,u_6\}$. Since we can assume that $\epsilon < min \{log_{\frac{\overline{c}}{4}}(1-\frac{\overline{c}}{14}),log_{\frac{\overline{c}}{8}}(\frac{1}{2})\}$, then analogously, as in the proof of Lemma \ref{path-galaxy}, using Lemma \ref{g}, one can prove that there exist vertices $x_2^i\in \displaystyle{\bigcap_{p\in V(U_2)}W^i_{2,p}}$ for $i=3,4$ and there exist vertices $x_i\in \displaystyle{\bigcap_{p\in V(U_2)}W_{i,p}}$ for $i=4,8$, such that $\{x_2^3,x_4\}\leftarrow x_8$ and $x_2^4\leftarrow x_4$. We will discuss according to the orientation of the edge $u_4u_9$.    Either $u_4\leftarrow u_9$, or $u_4\rightarrow u_9$. If  the former holds, then $(u_4,u_8,...,u_{11},u_{14},u_{15})$ is the cyclic ordering of $S_{12}$, and so $T$ contains $S_{12}$ as a subtournament and we are done. Let's assume now that the latter holds.   If $u_6\rightarrow u_{12}$, $u_2\rightarrow u_{13}$, $u_3\rightarrow u_{7}$, $x_2^4\rightarrow x_{8}$, and $x_2^3\rightarrow x_{4}$, then $(u_2,u_3,u_5,u_7,u_8,u_{11},u_{13})$, $(u_1,u_2,u_3,u_6,u_7,u_{12},u_{13})$, and $(u_1,x_2^3,x_2^4,u_6,x_4,u_{12},x_8)$ is the forest ordering of $S_8$, $S_{13}$, and $S_{14}$ respectively. So for all $H\in \{S_8,S_{13},S_{14}\}$, $T$ contains $H$ and we are done. Otherwise, there exist $z_1\in \{v_1,v_2,v_3,x_2^3,x_2^4\}$, $z_2\in \{v_6,v_7,x_4\}$, and $z_3\in \{v_{12},x_8,v_{13}\}$, such that $T$$\mid$$\{z_1,z_2,z_3\}$ is a cycle. In this case $(z_1,v_4,z_2,v_9,v_{10},z_3,v_{14})$ is the cyclic ordering of $S_{12}$, and we are done. In view of our observations, the lemma is confirmed. $\hfill {\square}$
\end{proof} 
\vspace{3mm}\\ Let $M_i\in \mathcal{F}_i$ for $i=1,...,4$.  If there exists a smooth $(c,\lambda,w)$-structure $\Sigma$ as in Lemma \ref{F1} (resp. \ref{F2})(resp. \ref{F4}), then we say that $\Sigma$ \textit{corresponds to} $(M_1, S_7)$ (resp. $(M_2,M_3)$) (resp. $(M_4,S_{12})$). 
\begin{theorem}
If $D_{1}\in \mathcal{F}_1$ and $D_{2}=S_7$, or: $D_{1}\in \mathcal{F}_2$ and $D_{2}\in \mathcal{F}_3$, or: $D_{1}\in \mathcal{F}_4$ and $D_{2}=S_{12}$, then $(D_{1},D_{2})$ satisfies the Erd\"{o}s-Hajnal Conjecture. 
\end{theorem}
\begin{proof}
Assume otherwise. Take $\epsilon > 0$ small enough. Then there exists an $(D_1,D_2)$$-$free $ \epsilon - $critical tournament $T$. By Corollary \ref{i}, $ T $ contains a $ (c,\lambda, w)- $smooth structure corresponding to $(D_{1},D_{2})$, for some $c>0, \lambda >0$ small enough. Now Lemma \ref{F1}, Lemma \ref{F2}, or Lemma \ref{F4} implies that $T$ contains $D_1$ or $T$ contains $D_2$, a contradiction. This completes the proof. $\hfill {\square }$
\end{proof}  
\subsection{Forbidding three tournaments}
\begin{figure}[h]
	\includegraphics[width=1\linewidth]{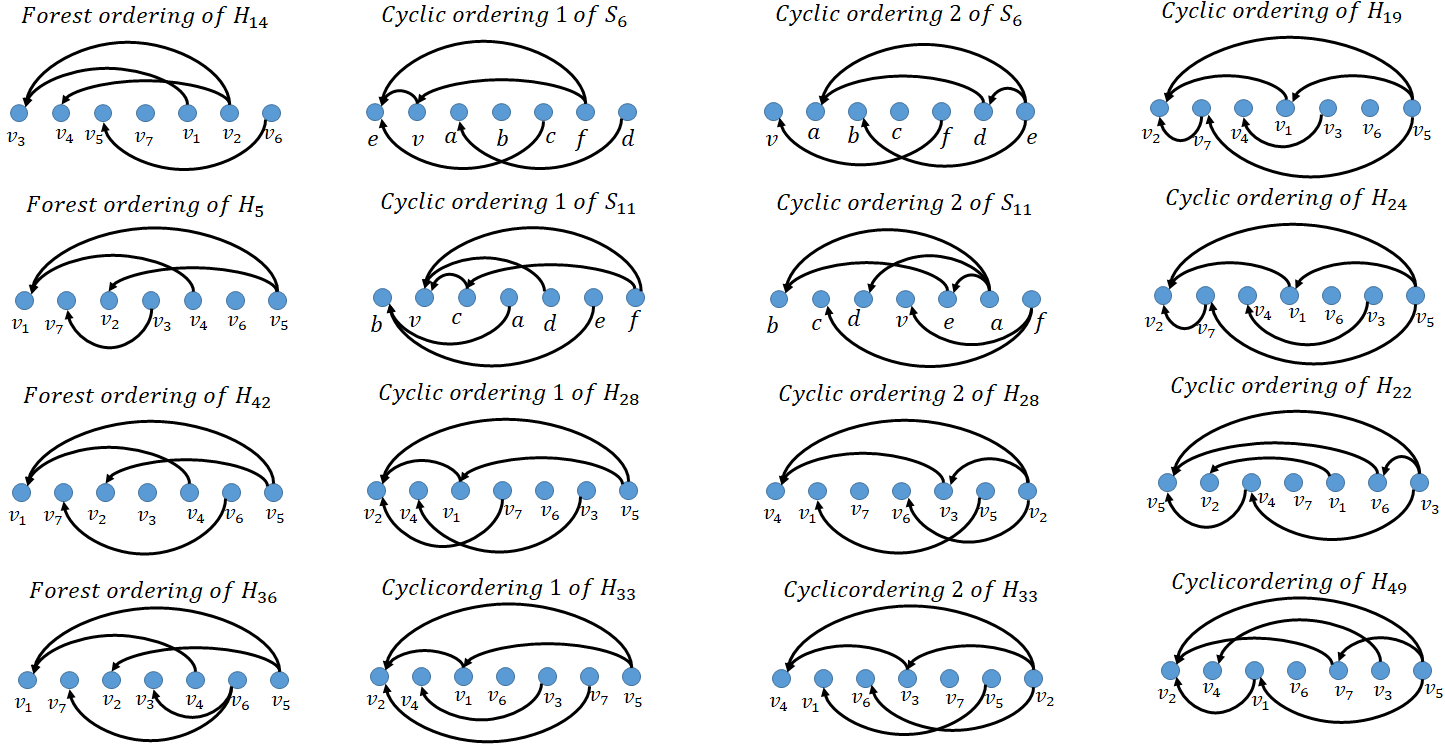}
	\caption{Crucial orderings of the vertices of some tournaments in classes $\mathcal{S}$ and $\mathcal{H}$. All the non drawn arcs are forward.}
	\label{fig:orderingsT}
\end{figure}
In what  follows we define special orderings of the vertex set of some tournaments in classes $\mathcal{S}$ and $\mathcal{H}$, and the set of backward arcs under each ordering (see Figure \ref{fig:orderingsT}).
\begin{itemize}[-]
\item \textit{Forest ordering of $H_{5}$}: $\theta_{f_{5}} = (v_{1},v_{7},v_{2},v_{3},v_{4},v_{6},v_{5})$, $E(\theta_{f_{5}})= \lbrace (v_{5},v_{2}),(v_{5},v_{1}),(v_{4},v_{1}),(v_{3},v_{7}) \rbrace $.
\item \textit{Forest ordering of $H_{14}$}: $\theta_{f_{14}} = (v_{3},v_{4},v_{5},v_{7},v_{1},v_{2},v_{6})$, $E(\theta_{f_{14}})= \lbrace (v_{2},v_{3}),(v_{2},v_{4}),(v_{1},v_{3}),(v_{6},v_{5}) \rbrace $.
\item \textit{Cyclic ordering of $H_{22}$}: $\theta_{C_{22}} = (v_{5},v_{2},v_{4},v_{7},v_{1},v_{6},v_{3})$, $E(\theta_{C_{22}})= \lbrace (v_{3},v_{4}),(v_{3},v_{5}),(v_{3},v_{6}),(v_{4},v_{5}),$ $(v_{6},v_{5}),(v_{1},v_{2}) \rbrace $.
\item \textit{Cyclic ordering $1$ of $S_{6}$}: $\theta^1_{C_{6}} = (e,v,a,b,c,f,d)$, $E(\theta^1_{C_{6}})= \lbrace (c,e),(d,a),(f,e),(f,v),(v,e) \rbrace $.
\item \textit{Cyclic ordering $2$ of $S_{6}$}: $\theta^2_{C_{6}} = (v,a,b,c,f,d,e)$, $E(\theta^1_{C_{6}})= \lbrace (e,d),(e,a),(e,b),(d,a),(f,v) \rbrace $.
\item \textit{Cyclic ordering $1$ of $S_{11}$}: $\theta^{1}_{C_{11}} = (b,v,c,a,d,e,f)$, $E(\theta^{1}_{C_{11}})= \lbrace (c,v),(a,b),(e,b),(d,v),(f,c),(f,v) \rbrace $;\\ \textit{Cyclic ordering $2$ of $S_{11}$}: $\theta^{2}_{C_{11}} = (b,c,d,v,a,e,f)$, $E(\theta^{2}_{C_{11}})= \lbrace (a,e),(a,b),(e,b),(a,d),(f,c),(f,v) \rbrace $.
\item \textit{Cyclic ordering $1$ of $H_{33}$}: $\theta^{1}_{C_{33}} = (v_{2},v_{4},v_{1},v_{6},v_{3},v_{7},v_{5})$, $E(\theta^{1}_{C_{33}})= \lbrace (v_{5},v_{2}),(v_{1},v_{2}),(v_{5},v_{1}),(v_{7},v_{2})$ $,(v_{3},v_{4}) \rbrace $.\\ \textit{Cyclic ordering $2$ of $H_{33}$}: $\theta^{2}_{C_{33}} = (v_{4},v_{1},v_{6},v_{3},v_{7},v_{5},v_{2})$, $E(\theta^{2}_{C_{33}})= \lbrace (v_{3},v_{4}),(v_{2},v_{4}),(v_{2},v_{3}),(v_{2},v_{6}),\\(v_{5},v_{1}) \rbrace $.
\item \textit{Cyclic ordering of $H_{19}$}: $\theta_{C_{19}} = (v_{2},v_{7},v_{4},v_{1},v_{3},v_{6},v_{5})$, $E(\theta_{C_{19}})= \lbrace (v_7,v_2),(v_{1},v_{2}),(v_{5},v_{2}),(v_{5},v_{1}),$ $(v_{5},v_{7}),(v_{3},v_{4}) \rbrace $.
\item \textit{Forest ordering of $H_{36}$}: $\theta_{f_{36}} = (v_{1},v_{7},v_{2},v_{3},v_{4},v_{6},v_{5})$, $E(\theta_{f_{36}})= \lbrace (v_{5},v_{2}),(v_{5},v_{1}),(v_{4},v_{1}),(v_{6},v_{7}),$ $(v_{6},v_{3}) \rbrace $. 
\item \textit{Cyclic ordering $1$ of $H_{28}$}: $\theta^{1}_{C_{28}} = (v_{2},v_{4},v_{1},v_{7},v_{6},v_{3},v_{5})$, $E(\theta^{1}_{C_{28}})= \lbrace (v_{5},v_{1}),(v_{7},v_{2}),(v_{1},v_{2}),(v_{3},v_{4}),$ $(v_{5},v_{2}) \rbrace $.\\\textit{Cyclic ordering $2$ of $H_{28}$}: $\theta^{2}_{C_{28}} = (v_{4},v_{1},v_{7},v_{6},v_{3},v_{5},v_{2})$, $E(\theta^{2}_{C_{28}})= \lbrace (v_{2},v_{3}),(v_{2},v_{6}),(v_{2},v_{4}),(v_{5},v_{1}),$ $(v_{3},v_{4}) \rbrace $. 
\item \textit{Cyclic ordering of $H_{24}$}: $\theta_{C_{24}} = (v_{2},v_{7},v_{4},v_{1},v_{6},v_{3},v_{5})$, $E(\theta_{C_{24}})= \lbrace (v_7,v_2),(v_{5},v_{1}),(v_{5},v_{7}),(v_{1},v_{2}),$ $(v_{3},v_{4}),(v_{5},v_{2}) \rbrace $.
\item \textit{Forest ordering of $H_{42}$}: $\theta_{f_{42}} = (v_{1},v_{7},v_{2},v_{3},v_{4},v_{6},v_{5})$, $E(\theta_{f_{42}})= \lbrace (v_{5},v_{2}),(v_{5},v_{1}),(v_{4},v_{1}),(v_{6},v_{7}) \rbrace $. 
\item \textit{Cyclic ordering of $H_{49}$}: $\theta_{C_{49}} = (v_{2},v_{4},v_{1},v_{6},v_{7},v_{3},v_{5})$, $E(\theta_{C_{49}})= \lbrace (v_{5},v_{1}),(v_{5},v_{7}),(v_{7},v_{2}),(v_{1},v_{2}),$ $(v_{3},v_{4}),(v_{5},v_{2}) \rbrace $.
\end{itemize}
\begin{lemma} 
 Let $\tilde{c}>0$, $0 < \tilde{\lambda} < \frac{1}{3(26)^{14}}$ be constants, let $\epsilon >0$ be small enough, and let $w$ be a $\{0,1\}$-vector. Let $\chi =(W_{1},...,W_{8})$ be a smooth $(\tilde{c},\tilde{\lambda} ,w)$$-$structure of an $n$-vertex $\epsilon$$-$critical tournament $T$ corresponding to $\mathcal{K}_7$ under $\theta_{\mathcal{K}_7}$. If  $H\in \{H_5,H_{42},H_{36}\}$, $H'\in \{S_6,S_{11},H_{28},H_{33}\}$, and $H''\in \{H_{19},H_{22},H_{24},H_{49}\}$, then $T$ contains $H$ or $T$ contains $H'$ or $T$ contains $H''$. \end{lemma}
 \begin{proof}
$\epsilon >0$ is small enough and $T$ is an $\epsilon$$-$critical tournament, then since $\chi$ is a smooth $(\tilde{c},\tilde{\lambda} ,w)$$-$structure of $T$ corresponding to $\mathcal{K}_7$ under $\theta_{\mathcal{K}_7}$, Lemma \ref{path-galaxy} implies that $\widehat{\mathcal{K}}_7$ is well-contained in $\chi$. Let $(v_1,...,v_{17})$ be the forest ordering of the well-contained copy $\mathcal{D}$ of $\widehat{\mathcal{K}}_7$. We are going to  study the orientation of some arcs in $T$$\mid$$V(\mathcal{D})$ to find $H$ or $H'$ or $H''$ in $T$ as a subtournament.  Clearly, $\{v_4,...,v_{13}\}\subseteq W_4$, with $\xi (v_i)=i$ for $i= 4,...,13$. We will discuss according to the orientation of the edges $v_2v_6$ and $v_{12}v_{16}$. If $(v_2,v_6)$ and $(v_{12},v_{16})$ are arcs of $T$, then $(v_2,v_3,v_6,v_7,v_{12},v_{15},v_{16})$, $(v_2,v_6,v_{9},v_{10},v_{12},v_{16},v_{17})$, $(v_2,v_4,v_{6},v_{11},v_{12},v_{14},v_{16})$, and $(v_2,v_4,v_6,v_8,v_{12},v_{14},v_{16})$ is the forest ordering of $H_5$, $H_{14}$, $H_{36}$, and $H_{42}$ respectively. So for all $H\in \{H_5,H_{42},H_{36}\}$, $T$ contains $H$. Otherwise if $(v_6,v_2)$ and $(v_{12},v_{16})$ are arcs of $T$, then $(v_2,v_6,v_9,v_{10},v_{12},v_{16},v_{17})$, $(v_1,v_2,v_6,v_{8},v_{12},v_{13},v_{16})$, $(v_2,v_4,v_{6},v_{12},v_{13},v_{14},v_{16})$, and $(v_2,v_3,v_6,v_8,v_{10},v_{12},v_{16})$ is the cyclic ordering $1$ of $S_6$, $S_{11}$, $H_{28}$, and $H_{33}$ respectively. So for all $H'\in \{S_6,S_{11},H_{28},H_{33}\}$, $T$ contains $H'$. Otherwise if $(v_2,v_6)$ and $(v_{16},v_{12})$ are arcs of $T$, then $(v_1,v_2,v_6,v_{7},v_{8},v_{12}$ $,v_{16})$, $(v_2,v_5,v_6,v_{9},v_{12},v_{16},v_{17})$, $(v_2,v_4,v_{5},v_{6},v_{12},v_{14},v_{16})$, and $(v_2,v_4,v_6,v_{12},v_{13},v_{14},v_{16})$ is the cyclic ordering $2$ of $S_6$, $S_{11}$, $H_{28}$, and $H_{33}$ respectively. So for all $H'\in \{S_6,S_{11},H_{28},H_{33}\}$, $T$ contains $H'$. Otherwise $(v_6,v_2)$ and $(v_{16},v_{12})$ are arcs of $T$, then $(v_2,v_4,v_6,v_{8},v_{12},v_{14},v_{16})$, $(v_2,v_6,v_{11},v_{12},v_{14},v_{15},v_{16})$, $(v_2,v_6,v_{11},v_{12},v_{13},v_{14},v_{16})$, and $(v_2,v_3,v_6,v_{8},v_{10},v_{12},v_{16})$ is the cyclic ordering  of $H_{49}$, $H_{19}$, $H_{24}$, and $H_{22}$ respectively. So for all $H''\in \{H_{19},H_{22},H_{24},H_{49}\}$, $T$ contains $H''$.  This completes the proof. $\hfill {\square}$ 
\end{proof}
\begin{theorem}
If $H\in \{H_5,H_{42},H_{36}\}$, $H'\in \{S_6,S_{11},H_{28},H_{33}\}$, and $H''\in \{H_{19},H_{22},H_{24},H_{49}\}$,  then $(H,H',H'')$ satisfies the Erd\"{o}s-Hajnal Conjecture. 
\end{theorem}
\begin{proof}
Assume otherwise. Take $\epsilon > 0$ small enough. Then there exists an $(H,H',H'')$$-$free $ \epsilon - $critical tournament $T$. By Corollary \ref{i}, $ T $ contains a $ (c,\lambda, w)- $smooth structure corresponding to $\mathcal{K}_7$ under $\theta_{\mathcal{K}_7}$, for some $c>0, \lambda >0$ small enough. By the previous lemma, $T$ contains $H$ or $T$ contains $H'$ or $T$ contains $H''$, a contradiction. This terminates the proof. $\hfill {\square }$
\end{proof}

\section{The landscape of seven-vertex tournaments}\label{landscape}
\noindent \sl {Proof of Theorem \ref{r}.} \upshape First note that, if there exists $ v$$\in$$ H $ such that $ d^{+}(v) = 6 $, then $ N^{+}(v) $ is a non-trivial homogeneous set and $H$ is not prime, and so we have two cases: Either $H$$\mid$$N^{+}(v) \ncong K_{6}$, and so $ 5 $ holds, or $H$$\mid$$N^{+}(v) \approx K_{6}$ then $ H \approx R_{1} $ and so $ 4 $ holds. Hence, we will suppose that  for all $ v \in H, d^{+}(v) \leq 5 $ and $ d^{-}(v) \leq 5 $.\\
\textbf{Case  1:} There exist $ v \in H$, such that  $ d^{+}(v) = 5, N^{+}(v) = \lbrace a,b,c,d,e \rbrace$, and $ N^{-}(v) = \lbrace f \rbrace. $ We can assume that $ 1 \leq d^{+}_{\lbrace a,b,c,d,e \rbrace}(f) \leq 4 $, since else $5$ holds.\\
\textbf{1.1} $ d^{+}_{\lbrace a,b,c,d,e \rbrace}(f) = 4, N^{+}_{\lbrace a,b,c,d,e \rbrace}(f) = \lbrace a,b,c,d \rbrace$, and $ N^{-}(f) = \lbrace e \rbrace $.\\ \textbf{1.1.1} $H$$\mid$$\lbrace a,b,c,d \rbrace$ is a transitive tournament (assume without loss of generality that $(a,b,c,d)$ is its transitive ordering). Then $(v,e,f,a,b,c,d)$ is a galaxy ordering of $H$ and so $ 1 $ holds.\\
\textbf{1.1.2} $H$$\mid$$\lbrace a,b,c,d \rbrace$ contains a cyclic triangle. Assume without loss of generality that $(a,b),(b,c),(c,a)$ are arcs of $H$. Now we will study the neighbors of $d$ in $ \lbrace a,b,c \rbrace $ and the neighbors of $e$ in $ \lbrace a,b,c,d \rbrace $.\\
$\bullet$ $ d^{+}_{\lbrace a,b,c \rbrace}(d) = 0 $: We have three cases. If $e$ has at most one in-neighbor in $\lbrace a,b,c,d \rbrace $, say $d$ or $b$ (without loss of generality), then $(v,e,f,a,b,c,d)$ is a galaxy ordering of $H$. If $ d^{-}_{\lbrace a,b,c,d \rbrace}(e) = 2$, then either $ N^{-}_{\lbrace a,b,c,d \rbrace}(e)= \lbrace a,d \rbrace $ (without loss of generality) and so $(f,v,a,b,d,e,c)$ is a galaxy ordering of $H$, or $ N^{-}_{\lbrace a,b,c,d \rbrace}(e)= \lbrace a,b \rbrace $ (without loss of generality) and so $(f,v,a,b,e,c,d)$ is a galaxy ordering of $H$. If $e$ has at most one out-neighbor in $\lbrace a,b,c,d \rbrace$, say $d$ or $b$ (without loss of generality) if one exists, then $(f,v,a,b,c,d,e)$ is a galaxy ordering of $H$. So in all cases $ 1 $ holds.\\
$\bullet$ $ d^{+}_{\lbrace a,b,c \rbrace}(d) = 1 $ (assume without loss of generality that $(d,b) \in E(H)$): If $ d^{-}_{\lbrace a,b,c,d \rbrace}(e) = 0 $, then $(f,v,e,c,a,d,b)$ is a galaxy ordering of $H$ and $ 1 $ holds. If $ d^{-}_{\lbrace a,b,c,d \rbrace}(e) = 1 $, then either $(c,e) \in E(H)$ and so $(f,v,c,e,a,d,b)$ is a galaxy ordering of $H$, or $N^{-}(e) \subseteq \lbrace v,a,b,d \rbrace$ and so $(v,e,f,c,a,d,b)$ is a galaxy ordering of $H$. If $ d^{-}_{\lbrace a,b,c,d \rbrace}(e) = 2 $, then $ N^{-}(e) = \lbrace b,d,v \rbrace $ and so $H \approx H^{c}_{1} $ and $ 3 $ holds, or $ N^{-}(e) = \lbrace a,d,v \rbrace $ and so $(f,v,a,d,e,b,c)$ is a galaxy ordering of $H$, or $ N^{-}(e) = \lbrace b,c,v \rbrace $ and so $(f,v,c,a,d,b,e)$ is a galaxy ordering of $H$, or $ N^{-}(e) = \lbrace a,b,v \rbrace $ and so $(f,v,a,b,e,c,d)$ is a galaxy ordering of $H$, or $ N^{-}(e) = \lbrace a,c,v \rbrace $ and so $(f,v,c,a,e,d,b)$ is a galaxy ordering of $H$, or $ N^{-}(e) = \lbrace c,d,v \rbrace $ and so $(f,v,c,a,d,e,b)$ is a galaxy ordering of $H$. So $ 1 $ holds in the above 5 cases. If $ d^{-}_{\lbrace a,b,c,d \rbrace}(e) = 3 $, we have four cases: $(e,a) $$\in$$ E(H)$ and so $(f,v,c,a,d,b,e)$ is a galaxy ordering of $H$, or $(e,b) \in E(H)$ and so $(f,v,c,a,d,e,b)$ is a galaxy ordering of $H$, or $(e,d) \in E(H)$ and so $(f,v,a,b,c,e,d)$ is a galaxy ordering of $H$, or $(e,c) \in E(H)$ and so $(f,v,a,d,b,e,c)$ is a galaxy ordering of $H$. So $1$ holds in the above four cases. If $ d^{-}_{\lbrace a,b,c,d \rbrace}(e) = 4 $, then $(f,v,a,b,c,d,e)$ is a galaxy ordering of $H$ and $ 1 $ holds.\\
Now note that if $ d^{+}_{\lbrace a,b,c \rbrace}(d) = 2 $ then this case is isomorphic to case 1.1.2.\\
$\bullet$ $ d^{+}_{\lbrace a,b,c \rbrace}(d) = 3 $: If $e$ has at most one in-neighbor in $\lbrace a,b,c,d \rbrace$, say $d$ or $b$ (without loss of generality) if one exists, then $(v,e,f,d,a,b,c)$ is a galaxy ordering of $H$. If $ d^{-}_{\lbrace a,b,c,d \rbrace}(e) = 2 $, then either $ N^{-}_{\lbrace a,b,c,d \rbrace}(e) = \lbrace b,d \rbrace $ (without loss of generality) and so $(v,e,f,d,a,b,c)$ is a galaxy ordering of $H$, or $ N^{-}_{\lbrace a,b,c,d \rbrace}(e) = \lbrace a,c \rbrace $ (without loss of generality) and so $(f,v,d,a,b,c,e)$ is a galaxy ordering of $H$. If $ d^{-}_{\lbrace a,b,c,d \rbrace}(e) \geq 3 $, then either $(e,d) $$\in$$ E(H)$, or $(e,b) $$\in$$ E(H)$ (without loss of generality) (if one exists), and in both cases $(f,v,d,a,b,c,e)$ is a galaxy ordering of $H$. So $ 1 $ holds. \\
\textbf{1.2} $ d^{+}_{\lbrace a,b,c,d,e \rbrace}(f) = 3 $, say $ N^{+}_{\lbrace a,b,c,d,e \rbrace}(f) = \lbrace a,b,c \rbrace , N^{-}(f) = \lbrace d,e \rbrace $, and assume without loss of generality that $(d,e) \in E(H)$.\\
\textbf{1.2.1} $H$$\mid$$\lbrace a,b,c\rbrace$ is a transitive tournament. Assume without loss of generality that $(a,b),(a,c),(b,c)$ are arcs of $H$. Now we are going to study the orientation of the arcs between $ \lbrace d,e \rbrace $ and $ \lbrace a,b,c \rbrace $.\\
\textbf{1.2.1.1} $ d^{+}_{\lbrace a,b,c \rbrace}(e) = 3 $. Then $(v,d,e,f,a,b,c)$ is a galaxy ordering of $H$, and so $ 1 $ holds.\\
\textbf{1.2.1.2} $ d^{+}_{\lbrace a,b,c \rbrace}(e) = 2 $.\\
$\bullet$ If $ N^{+}_{\lbrace a,b,c \rbrace}(e) = \lbrace b,c \rbrace $, then either $a \in N^{+}(d)$ and so $(v,d,e,f,a,b,c)$ is a galaxy ordering of $H$, or $a \in N^{-}(d)$ and so $(v,a,d,e,f,b,c)$ is a galaxy ordering of $H$. Hence in both cases $ 1 $ holds.\\
$\bullet$ If $ N^{+}_{\lbrace a,b,c \rbrace}(e) = \lbrace a,b \rbrace $, then either $c \in N^{+}(d)$ and so $(v,d,e,f,a,b,c)$ is a galaxy ordering of $H$ and $ 1 $ holds, or $c \in N^{-}(d)$ and here we have 4 cases: If $a$ and $b$ are out-neighbors or in-neighbors of $d$, then $ \lbrace a,b \rbrace $ is a non-trivial homogeneous set and $H$ is $K_{6}-$free, and so $ 5 $ holds. If $(a,d)$ and $(d,b)$ are arcs of $H$, then $H $$\approx$$ H_{2}$ and so $ 3 $ holds. If $(d,a)$ and $(b,d)$ are arcs of $H$, then $H \approx H^{c}_{3} $ and so $ 3 $ holds. \\
$\bullet$ If $ N^{+}_{\lbrace a,b,c \rbrace}(e) = \lbrace a,c \rbrace $, then $c$ is an in-neighbor of $d$. Now either $b \in N^{+}(d)$ and so $(v,d,e,f,a,b,c)$ is a galaxy ordering of $H$ and $ 1 $ holds, or $b \in N^{-}(d)$ and here we have two cases: either $a \in N^{-}(d) $ and so $H $$\approx$$ H_{4} $ and $ 3 $ holds, or $(d,a)\in E(H)$ and so $H $$\approx$$ S_{1}$ and $ 6 $ holds.\\
\textbf{1.2.1.3} $ d^{+}_{\lbrace a,b,c \rbrace}(e) = 1 $.\\
$\bullet$ If $(e,a) $$\in$$ E(H)$, then we have four cases: If $ d^{+}_{\lbrace a,b,c \rbrace}(d) = 3 $, then $(v,d,e,f,a,b,c)$ is a galaxy ordering of $H$ and $ 1 $ holds. If $ d^{+}_{\lbrace a,b,c \rbrace}(d) = 2 $, then either $(a,d) $$\in$$ E(H)$ and so $(v,d,e,f,a,b,c)$ is a galaxy ordering of $H$, or $N^{-}(d) \subseteq \lbrace v,b,c \rbrace$ and so $(d,f,v,a,b,c,e)$ is a galaxy ordering of $H$. So $ 1 $ holds. Now if $ d^{+}_{\lbrace a,b,c \rbrace}(d) = 1 $, then $(d,a) \in E(H)$ and so $(d,f,v,a,b,c,e)$ is a galaxy ordering of $H$ and $ 1 $ holds, or $(d,b) $$\in$$ E(H)$ and so $H $$\approx$$ H^{c}_{5} $ and $ 3 $ holds, or $(d,c) $$\in$$ E(H)$ and so $H $$\approx$$ H_{6} $ and $ 3 $ holds. Finally if $ d^{+}_{\lbrace a,b,c \rbrace}(d) = 0 $, then $\lbrace b,c \rbrace $ is a non-trivial homogeneous set and $H$ is $K_{6}-$free. So $ 5 $ holds.\\
$\bullet$ If $(e,b)$$\in$$ E(H)$, then we have four cases: If $ d^{+}_{\lbrace a,b,c \rbrace}(d) = 3 $, then $(v,d,e,f,a,b,c)$ is a galaxy ordering of $H$ and $ 1 $ holds. If $ d^{+}_{\lbrace a,b,c \rbrace}(d) = 2 $, then $(a,d) $$\in$$ E(H)$ and so $(v,a,d,e,f,b,c)$ is a galaxy ordering of $H$, or $(b,d)$$ \in $$E(H)$ and so $(v,d,e,f,a,b,c)$ is a galaxy ordering of $H$, or $(c,d)$$ \in$$ E(H)$ and so $(d,f,v,a,b,c,e)$ is a galaxy ordering. Hence $ 1 $ holds. Now if $ d^{+}_{\lbrace a,b,c \rbrace}(d) = 1 $, then $(d,c) $$\in$$ E(H)$ and so $(v,a,d,e,f,b,c)$ is a galaxy ordering of $H$ and $ 1 $ holds, or $(d,a) $$\in$$ E(H)$ and so $H$$ \approx$$ H_{7}^{c} $ and $ 3 $ holds, or $(d,b) $$\in$$ E(H)$ and so $\lbrace d,e \rbrace $ is a non-trivial homogeneous set, $H$ is $K_{6}-$free, and $ 5 $ holds. Finally if $ d^{+}_{\lbrace a,b,c \rbrace}(d) = 0 $, then $\lbrace a,v \rbrace$ is a non-trivial homogeneous set and $H$ is $K_{6}-$free. So $ 5 $ holds.\\
$\bullet$ $(e,c) $$\in$$ E(H)$. Then $c$ must be an in-neighbor of $d$, since else $d^{-}(c) = 6$, a contradiction. So we have four cases: If $\lbrace a,b \rbrace \subseteq N^{+}(d)$ or $\lbrace a,b \rbrace \subseteq N^{-}(d)$, then $\lbrace a,b \rbrace $ is a non-trivial homogeneous set and $H$ is $K_{6}-$free. So $ 5 $ holds. If $(d,a)$ and $(b,d)$ are arcs of $H$, then $(d,f,v,a,b,e,c)$ is a galaxy ordering of $H$ and $ 1 $ holds. Finally if $(a,d)$ and $(d,b)$ are arcs of $H$, then $(v,a,d,e,f,b,c)$ is a galaxy ordering of $H$ and $ 1 $ holds.\\
\textbf{1.2.1.4} $ d^{+}_{\lbrace a,b,c \rbrace}(e)= 0 $. Then for $ N^{-}(d) \subseteq \lbrace v,a,b,c \rbrace $, $(d,f,v,a,b,c,e)$ is a galaxy ordering of $H$ and $ 1 $ holds.\\
\textbf{1.2.2} $H$$\mid$$\lbrace a,b,c\rbrace$ is a cycle. Assume without loss of generality that $(a,b),(b,c),(c,a)$ are arcs of $H$. So we have four cases:\\
$\bullet$ $ d^{+}_{\lbrace a,b,c \rbrace}(e)= 3 $. Then if $d$ has at most one in-neighbor in $\lbrace a,b,c \rbrace$, say $b$ (without loss of generality) if one exists, then $(v,d,e,f,a,b,c)$ is a galaxy ordering of $H$. If $d$ has one out-neighbor in $\lbrace a,b,c \rbrace$, say $c$ (without loss of generality), then $H$$\approx$$ S_{2}$. Finally if $ d^{+}_{\lbrace a,b,c \rbrace}(d) = 0 $, then $\lbrace a,b,c \rbrace $ is a non-trivial homogeneous set and $H$ is $K_{6}-$free. Hence $1,5$, or $ 6 $ holds.\\
$\bullet$ $e$ has one in-neighbor in $\lbrace a,b,c \rbrace$, say $b$ (without loss of generality): If $d^{+}_{\lbrace a,b,c \rbrace}(d) = 3$, then $(v,d,e,f,a,b,c)$ is a galaxy ordering of $H$. If $d^{+}_{\lbrace a,b,c \rbrace}(d) = 2$, then either $N^{-}_{\lbrace a,b,c \rbrace}(d) \subseteq \lbrace b,c \rbrace$ and so $(v,d,e,f,a,b,c)$ is a galaxy ordering of $H$, or $(a,d) \in E(H)$ and so $(v,d,e,f,c,a,b)$ is a galaxy ordering of $H$. If $ d^{+}_{\lbrace a,b,c \rbrace}(d) = 1 $, then  $(d,a) \in E(H)$ and so $H $$\approx$$ S_{3} $, or $(d,b) \in E(H)$ and so $H $$\approx$$ H^{c}_{8}$, or $(d,c)\in E(H)$ and so $H $$\approx$$ H_{9} $. Finally if $ d^{+}_{\lbrace a,b,c \rbrace}(d) = 0 $, then $H \approx H_{10} $. Hence $1, 3$, or $6 $ holds.\\
$\bullet$ $ d^{+}_{\lbrace a,b,c \rbrace}(e)= 1 $. Assume without loss of generality that $(e,b) \in E(H)$. If $ d^{-}_{\lbrace a,b,c \rbrace}(d) \leq 1 $, then $N^{-}_{\lbrace a,b,c \rbrace}(d) \subseteq \lbrace a \rbrace$ and so $(d,f,v,c,a,e,b)$ is a galaxy ordering of $H$, or $(c,d) \in E(H)$ and so $(f,v,c,d,a,e,b)$ is a galaxy ordering of $H$, or $(b,d) \in E(H)$ and so $H\approx S_{4}$. Now if $ d^{+}_{\lbrace a,b,c \rbrace}(d) = 1 $, then $(d,b) \in E(H)$ and so $(f,v,c,a,d,e,b)$ is a galaxy ordering of $H$, or $(d,a) $$\in$$ E(H)$ and so $H $$\approx$$ H^{c}_{11} $, or $(d,c) \in E(H)$ and so $H $$\approx$$ R_{2} $. If $ d^{+}_{\lbrace a,b,c \rbrace}(d) = 0 $, then $H \approx H_{12}^{c} $. Hence $1,3,4,$ or $6$ holds.\\
$\bullet$ $ d^{+}_{\lbrace a,b,c \rbrace}(e)= 0 $. If $d$ has at most one in-neighbor in $\lbrace a,b,c \rbrace$, say $b$ (without loss of generality) if one exists, then $(d,f,v,a,b,c,e)$ is a galaxy ordering of $H$. If $ d^{+}_{\lbrace a,b,c \rbrace}(d) = 1 $, assume without loss of generality that $(d,c) $$\in$$ E(H)$. Then $(f,v,a,b,d,c,e)$ is a galaxy ordering of $H$. If $ d^{+}_{\lbrace a,b,c \rbrace}(d) = 0 $, then $(f,v,a,b,c,d,e)$ is a galaxy ordering of $H$. So $ 1 $ holds.\\
\textbf{1.3} $ d^{+}_{\lbrace a,b,c,d,e \rbrace}(f)= 2. $ We can assume without loss of generality that $ N^{+}(f)= \lbrace v,d,e \rbrace $, $ N^{-}(f)= \lbrace a,b,c \rbrace $, and $(e,d) \in E(H)$. Then $ 1 \leq d^{+}(d) \leq 3$.\\
\textbf{1.3.1} $H$$ \mid $$ \lbrace a,b,c \rbrace $ is a transitive tournament. Assume without loss of generality that $(a,b)$,$(b,c)$,$(a,c)$ are arcs of $H$. We will discuss according to the neighbors of $e$. \\
$\bullet$ If $ d^{+}_{\lbrace a,b,c \rbrace}(e)= 0 $, then $(v,a,b,c,f,e,d)$ is a galaxy ordering of $H$ and $ 1 $ holds.\\
$\bullet$ If $ d^{+}_{\lbrace a,b,c \rbrace}(e)= 1 $, then: \\
$\ast$ If $(e,a) \in E(H)$, then either $(a,d) \in E(H)$ and so $(v,a,b,c,f,e,d)$ is a galaxy ordering and $ 1 $ holds, or $(d,a) \in E(H)$ and here we have three cases: if $ \lbrace b,c \rbrace \subseteq N^{+}(d) $ or $ \lbrace b,c \rbrace \subseteq N^{-}(d) $, then $\lbrace b,c \rbrace $ is a non-trivial homogeneous set and $H$ is $K_{6}-$free and so $ 5 $ holds.  If $(b,d)$ and $(d,c)$ are arcs of $H$, then $H \approx H^{c}_{13}$ and so $ 3 $ holds. Finally  if $(d,b)$ and $(c,d)$ are arcs of $H$, then $H \approx H^{c}_{14} $ and $ 3 $ holds. \\
$\ast$ If $(e,b) \in E(H)$, then either $(b,d) \in E(H)$ and so $(v,a,b,c,f,e,d)$ is a galaxy ordering of $H$ and $ 1 $ holds, or $(d,b) \in E(H)$ and here we have four cases: if $ \lbrace a,c \rbrace \subseteq N^{-}(d) $, then $(v,a,b,c,f,e,d)$ is a galaxy ordering of $H$ and $ 1 $ holds. Otherwise if $ \lbrace a,c \rbrace \subseteq N^{+}(d) $, then $H \approx H^{c}_{15}$ and $ 3 $ holds. Otherwise if $(d,c)$ and $(a,d)$ are arcs, then $H \approx H^{c}_{1}$ and $ 3 $ holds. Otherwise, $(c,d)$ and $(d,a)$ are arcs, then $H \approx S_{5}$ and $ 6 $ holds. \\
$\ast$ If $(e,c) \in E(H)$, then for $N^{+}(d) \subseteq \lbrace a,b,c \rbrace$, $(v,a,b,e,c,f,d)$ is a galaxy ordering of $H$ and $ 1 $ holds.\\
$\bullet$ $ d^{+}_{\lbrace a,b,c \rbrace}(e)= 2 $. Here we will discuss according to the neighbors of $e$.\\
$\ast$ If $ N^{+}_{\lbrace a,b,c \rbrace}(e)= \lbrace b,c \rbrace $, then for $ N^{+}(d) \subseteq \lbrace a,b,c \rbrace$, $(v,a,e,b,c,f,d)$ is a galaxy ordering and $ 1 $ holds.\\
$\ast$ If $ N^{+}_{\lbrace a,b,c \rbrace}(e)= \lbrace b,a \rbrace $, then we have three cases: if $ \lbrace b,a \rbrace \subseteq N^{-}(d) $, then $(d,c)$$\in$$E(H)$ and so $(v,a,b,c,f,e,d)$ is a galaxy ordering. If $ \lbrace b,a \rbrace \subseteq N^{+}(d) $, then $\lbrace b,a \rbrace $ is a non-trivial homogeneous set and $H$ is $ K_{6}-$free whatever $c$ is an out-neighbor or in-neighbor of $d$. If $(d,a)$ and $(b,d)$ are arcs of $H$, then either $(d,c) $$\in$$ E(H)$ and so $H $$\approx $$H^{c}_{16}$, or $(c,d) $$\in$$ E(H)$ and so $H $$\approx $$S_{6}$. Now if $(a,d)$ and $(d,b)$ are arcs of $H$, then either $(c,d) $$\in$$ E(H)$ and so $H $$\approx $$S^{c}_{1}$, or $(d,c) $$\in$$ E(H)$ and so $H $$\approx$$ H^{c}_{17} $. Hence $1,3,5$, or $ 6 $ holds.\\
$\ast$ If $ N^{+}_{\lbrace a,b,c \rbrace}(e)= \lbrace a,c \rbrace $, then we have four cases: if $ \lbrace a,b \rbrace \subseteq N^{-}(d)$, then $(d,c)$$\in$$E(H)$ and so $(v,b,e,a,c,f,d)$ is a galaxy ordering. Otherwise if $\lbrace a,b \rbrace \subseteq N^{+}(d)$, then either $(c,d) $$\in$$ E(H)$ and so $H $$\approx$$ H_{18}$, or $(d,c) $$\in$$ E(H)$ and so $H $$\approx$$ H_{19}$. Otherwise if $(d,a)$ and $(b,d)$ are arcs, then either $(c,d) $$\in$$ E(H)$ and so $(a,f,v,b,e,c,d)$ is a galaxy ordering, or $(d,c) $$\in$$ E(H)$ and so $(v,b,e,d,a,c,f)$ is a galaxy ordering. Otherwise, $(a,d)$ and $(d,b)$ are arcs. Then either $(c,d) $$\in$$ E(H)$ and so $(v,a,b,c,f,e,d)$ is a galaxy ordering and $ 1 $ holds, or $(d,c) $$\in$$ E(H)$ and so $H $$\approx $$S_{7} $. Hence $1,3$, or $6$ holds.\\
$\bullet$ If $ d^{+}_{\lbrace a,b,c \rbrace}(e)= 3 $, then for $ N^{+}(d) \subseteq \lbrace a,b,c \rbrace$, $(v,e,a,b,c,f,d)$ is a galaxy ordering of $H$ and $ 1 $ holds.\\
\textbf{1.3.2} $H$$\mid$$\lbrace a,b,c \rbrace$ is a cyclic triangle. Assume without loss of generality that $(a,b),(b,c),(c,a)$ are arcs of $H$.\\
$\bullet$ If $ d^{+}_{\lbrace a,b,c \rbrace}(e)= 0 $, then $d$ has one out-neighbor in $\lbrace a,b,c \rbrace $, say $b$ (without loss of generality) and so $(v,a,b,c,f,e,d)$ is a galaxy ordering of $H$, or $d^{+}(d)= 2 $ (assume without loss of generality that $(d,b)$ and $(d,c)$ are arcs), and so $ H \approx S^{c}_{3}$, or $ d^{+}(d)= 3 $ and so $\lbrace b,a,c \rbrace $ is a non-trivial homogeneous set and $H$ is $ K_{6}-$free. Hence $1,5$, or $6$ holds. \\
$\bullet$ If $ d^{+}_{\lbrace a,b,c \rbrace}(e)= 1 $, assume without loss of generality that $(e,b)$$\in$$E(H)$, then we have three cases: if $ d^{+}_{\lbrace a,b,c \rbrace}(d)= 1 $, then either $N^{+}(d) \subseteq \lbrace a,b \rbrace$ and so $(v,a,b,c,f,e,d)$ is a galaxy ordering of $H$, or $(d,c) \in E(H)$ and so $(v,b,c,a,f,e,d)$ is a galaxy ordering of $H$. Now if $ d^{+}(d)= 2 $, then $(d,a)$ and $(d,b)$ are arcs and so $H \approx S_{8}$, or $(d,a)$ and $(d,c)$ are arcs and so $H \approx H_{20} $, or $(d,b)$ and $(d,c)$ are arcs and so $H \approx H^{c}_{21} $. Finally if $ d^{+}(d)= 3 $, then $H \approx H^{c}_{22} $. Hence $ 1,3 $, or $6$ holds.\\
$\bullet$ If $ d^{+}_{\lbrace a,b,c \rbrace}(e)= 2 $, assume without loss of generality that $(e,b)$ and $(e,c)$ are arcs, then we have three cases: if $ d^{+}(d)= 1 $, then $(d,b) \in E(H)$ and so $(v,a,e,b,c,f,d)$ is a galaxy ordering, or $(d,a) \in E(H)$ and so $ (v,a,e,b,c,f,d)$ is a constellation ordering (Note that $H$ is prime and not galaxy), or $(d,c) \in E(H)$ and so $(v,a,e,b,d,c,f)$ is a galaxy ordering. Now if $ d^{+}(d)= 2 $, then $(d,a)$ and $(d,b)$ are arcs, and so $H $$\approx$$ R^{c}_{3} $, or $(d,b)$ and $(d,c)$ are arcs and so $(v,a,e,d,b,c,f)$ is a galaxy ordering of $H$, or $(d,a)$ and $(d,c)$ are arcs and so $H $$\approx$$ H^{c}_{23} $. Finally if $ d^{+}(d)= 3 $, then $H $$\approx$$ H_{24} $. So $1,2,3$ or $ 4 $ holds.\\
$\bullet$ If $ d^{+}_{\lbrace a,b,c \rbrace}(e)= 3 $, then: $d$ has one out-neighbor in $\lbrace a,b,c \rbrace $ say $b$ (without loss of generality) and so $(v,e,a,b,c,f,d)$ is a galaxy ordering, or $d$ has two out-neighbors say $b$ and $c$ (without loss of generality) and so $(v,e,a,d,b,c,f)$ is a galaxy ordering, or $ d^{+}(d)$$=$$ 3 $ and so $(v,e,d,a,b,c,f)$ is a galaxy ordering. So $ 1 $ holds.\\
\textbf{1.4 $ d^{+}_{\lbrace a,b,c,d,e \rbrace }(f) = 1 $}. We can assume without loss of generality that $(f,e) \in E(H)$, then $1 \leq d^{+}(e) \leq 4$. If $H$$\mid$$\lbrace a,b,c,d \rbrace$ is a transitive tournament (assume without loss of generality that $a\rightarrow \lbrace b,c,d\rbrace$, $b\rightarrow\lbrace c,d \rbrace$, and $c\rightarrow d$), then $(v,a,b,c,d,f,e)$ is a galaxy ordering and $ 1 $ holds, and if $H$$ \mid$$ \lbrace a,b,c,d \rbrace $ contains a cycle of length $ 3 $, assume  without loss of generality that $(a,b),(b,c),(c,a)$ are arcs of $H$. Now we will study the neighbors of $d$ in $ \lbrace a,b,c \rbrace $ and the neighbors of $e$ in $ \lbrace a,b,c,d \rbrace $.\\
\textbf{1.4.1} $\lbrace a,b,c \rbrace $$\subseteq $$N^{-}(d)$. We have four cases: If $ d^{+}(e) = 1 $, then either $(e,d) $$\in$$ E(H)$, or $(e,b)$$ \in$$ E(H)$ (without loss of generality), and in both cases $(v,a,b,c,d,f,e)$ is a galaxy ordering of $H$. Otherwise if $ d^{+}(e) = 2 $, then either $(e,b)$ and $(e,c)$ are arcs (without loss of generality) and so $H$$ \approx$$ S_{9} $, or $(e,b)$ and $(e,d)$ are arcs (without loss of generality) and so $(v,a,b,c,d,f,e)$ is a galaxy ordering of $H$. Otherwise if $ d^{+}(e) = 3 $, then either $(d,e)$$ \in$$ E(H)$ and so $\lbrace b,a,c \rbrace $ is a non-trivial homogeneous set and $H$ is $K_{6}-$free, or $(a,e)$$ \in $$E(H)$ (without loss of generality) and so $(v,a,e,b,c,d,f)$ is a galaxy ordering of $H$. Otherwise,  $ d^{+}(e) = 4 $. Then $(v,e,a,b,c,d,f)$ is a galaxy orderingof $H$. So $ 1,5 $, or $6$ holds.\\
\textbf{1.4.2} If $ d^{+}_{\lbrace a,b,c \rbrace}(d) = 1 $, assume without loss of generality that $(d,c) $$\in$$ E(H)$ and so we have four cases:\\
$\bullet$ $ d^{+}(e) = 1 $. If $c$ is an out-neighbor of $e$, then $(v,a,b,d,e,c,f)$ is a galaxy ordering and $ 1 $ holds. Otherwise, $(v,a,b,d,c,f,e)$ is a galaxy ordering of $H$. So $ 1 $ holds.\\
$\bullet$ If $ d^{+}(e) = 2 $, then $(e,c)$ and $(e,d)$ are arcs of $H$ and so $(v,a,b,e,d,c,f)$ is a galaxy ordering, or $(e,a)$ and $(e,d)$ are arcs of $H$ and so $(v,b,c,e,a,d,f)$ is a galaxy ordering, or $(e,b)$ and $(e,c)$ are arcs and so $(v,a,d,e,b,c,f)$ is a galaxy ordering, or $(e,a)$ and $(e,b)$ are arcs of $H$ and so $H $$\approx$$ H_{25}$, or $(e,b)$ and $(e,d)$ are arcs of $H$, or $(e,a)$ and $(e,c)$ are arcs of $H$. In the last two cases $ \lbrace b,d \rbrace $ is a non-trivial homogeneous set and $H$ is $ K_{6}-$free. So $1,3$, or $ 5 $ holds. \\
$\bullet$ $ 3 \leq d^{+}(e) \leq 4 $. We have three cases: if $\lbrace b,d \rbrace \subseteq N^{+}(e)$, then $\lbrace b,d \rbrace $ is a non-trivial homogeneous set and $H$ is $K_{6}-$free and so $5$ holds. Otherwise if $ N^{+}(e) = \lbrace a,c,d \rbrace$, then $(v,b,e,c,a,d,f)$ is a galaxy ordering of $H$ and $ 1 $ holds. Otherwise $ N^{+}(e) = \lbrace a,b,c \rbrace$, then $H $$\approx $$S^c_{10} $. So $1,5$, or $ 6 $ holds.\\
\textbf{1.4.3} If $ d^{+}_{\lbrace a,b,c \rbrace}(d)= 2 $, assume without loss of generality that $\lbrace b,c \rbrace \subseteq N^{+}(d)$, and so this case is isomorphic to case $1.4.2$.\\
\textbf{1.4.4} If $ d^{+}_{\lbrace a,b,c \rbrace}(d)$$= $$3 $, then:\\
$\bullet$ If $ d^{+}(e)$$ = $$1 $, then either $(e,d)$$ \in$$ E(H)$, or $(e,b) $$\in$$ E(H)$ (without loss of generality). In both cases $(v,d,a,b,c,f,e)$ is a galaxy ordering of $H$ and $ 1 $ holds.\\
$\bullet$ If $ d^{+}(e) $$= $$2 $, then either $ N^{+}(e)$$=$$ \lbrace d,b \rbrace $ (without loss of generality) and so $(v,d,a,b,c,f,e)$ is a galaxy ordering, or $ N^{+}(e)$$=$$ \lbrace c,b \rbrace $ (without loss of generality) and so $(v,d,a,e,b,c,f)$ is a galaxy ordering, so $ 1 $ holds.\\
$\bullet$ If $ d^{+}(e)$$ =$$ 3 $, then either $ N^{+}(e)= \lbrace a,b,c \rbrace $ and so $(v,d,e,a,b,c,f)$ is a galaxy ordering of $H$ and $ 1 $ holds, or $ N^{+}(e)= \lbrace a,d,b \rbrace $ (without loss of generality) and so $ (v,c,e,d,a,b,f)$ is a constellation ordering of $H$ and $ 2 $ holds.\\
$\bullet$ If $ d^{+}(e) = 4 $, then $(v,e,d,a,b,c,f)$ is a galaxy ordering and $ 1 $ holds.
\vspace{3mm}

 Let us denote by $ n_{3,3} $ the number of vertices $v$ of $H$ such that $v$ has $ 3 $ out-neighbors and $ 3 $ in-neighbors, $ n_{4,2} $ the number of vertices $v$ of $H$ such that $v$ has $ 4 $ out-neighbors and $ 2 $ in-neighbors, and $ n_{2,4} $ the number of vertices $v$ of $H$ such that $v$ has $ 2 $ out-neighbors and $ 4 $ in-neighbors. Then we have:\\
$ 21 = 4n_{4,2} + 3n_{3,3} + 2n_{2,4} = 2n_{4,2} + 3n_{3,3} + 4n_{2,4} $. So we have four cases:
\vspace{1.5mm}\\
\textbf{Case 2: $n_{3,3} = 1, n_{4,2} = n_{2,4} = 3 $}.
Let $a,b,c$ be the vertices of $H$, such that $ d^{+}(a) = d^{+}(b) = d^{+}(c) = 4 $. Let $e,f,v$ be the vertices of $H$, such that $ d^{+}(e) = d^{+}(f) = d^{+}(v) = 2 $. Let $d$ be the vertex of $ H $, such that $ d^{+}(d) = 3 $.\\
\textbf{2.1} $H$$\mid$$\lbrace a,b,c \rbrace$ is a transitive tournament (assume without loss of generality that $(a,b),(b,c),(a,c)$ are arcs). Then $ \lbrace c \rbrace $ is complete to $ \lbrace d,e,f,v \rbrace $ and $b$ has one in-neighbor in $ \lbrace d,e,f,v \rbrace $. So without loss of generality we have two cases:\\
$\bullet$ If $(v,b) $$\in$$ E(H)$, then $a$ has two out-neighbors in $ \lbrace d,e,f,v \rbrace $, and so we have three cases: If $d$ is an out-neighbor of $a$, then $d$ is complete to $ \lbrace e,f,v \rbrace$ and so $\lbrace c,d \rbrace $ is a non-trivial homogeneous set of a $K_{6}-$free tournament $H$. Otherwise if $ N^{+}(a) = \lbrace b,c,e,f \rbrace $, then $ \lbrace v \rbrace $ is complete from $ \lbrace d,e,f \rbrace $, and so we can assume without loss of generality that $(e,d) $$\in$$ E(H)$. Then $(f,e)$ and $(d,f)$ are also arcs and so $H $$\approx$$ H_{26} $. Otherwise $ N^{+}(a) = \lbrace b,c,e,v \rbrace $, then $d$ has one in-neighbor in $ \lbrace e,f,v \rbrace $: $(f,d) $$\in$$ E(H)$ and so $H $$\approx$$ H_{27} $, or $(v,d) \in E(H)$ and so $H $$\approx$$ H_{28} $, or $(e,d) $$\in$$ E(H)$. In case $(e,d)$ is an arc of $H$, $e$ still has one in-neighbor in $ \lbrace f,v \rbrace $. Then either $(v,e) $$\in$$ E(H)$ and so $(a,b,c,e,d,f,v)$ is a galaxy ordering of $H$, or $(f,e)$$ \in$$ E(H)$ and so $H $$\approx$$ R_{4} $. Hence $1,3,4$, or $5$ holds.\\
$\bullet$ If $(d,b) \in E(H)$, then $a$ have two out-neighbors in $ \lbrace d,e,f,v \rbrace $, and so without loss of generality we have two cases: If $ N^{+}(a) = \lbrace b,c,e,f \rbrace $, then either $(d,e) \in E(H)$ (without loss of generality) and so $H $$\approx$$ H_{29} $, or $(d,v) \in E(H)$ and so $(v,e) \in E(H)$ (without loss of generality) and $H $$\approx$$ H^{c}_{30} $. Otherwise $ N^{+}(a) = \lbrace b,c,d,e \rbrace $. Then $d$ has one in-neighbor in $ \lbrace e,f,v \rbrace $ and so without loss of generality we have two cases. Either $(f,d) \in E(H)$ and so $H $$\approx$$ H^{c}_{26}$, or $(e,d) \in E(H)$ and so $(e,f) \in E(H)$ (without loss of generality) and $H $$\approx$$ H^{c}_{31} $. Hence  $3$ holds.\\
\textbf{2.2} $H$$\mid$$\lbrace a,b,c \rbrace$ is a cycle. Assume without loss of generality that $(a,b),(b,c),(c,a)$ are arcs of $H$. We will discuss according to the out-neighbors of $d$. \\
$\bullet$ If $ N^{+}(d) = \lbrace a,b,c \rbrace $, then $ \lbrace a,b,c \rbrace $ is a non-trivial homogeneous set of a $K_{6}-$free tournament $H$, and so $ 5 $ holds.\\
$\bullet$ If $ N^{+}(d)$$ =$$ \lbrace e,f,v \rbrace $, then we can assume without loss of generality that $ N^{+}(a)$$ =$$ \lbrace b,d,e,f \rbrace $, and so $b$ has two out-neighbors in $ \lbrace e,f,v \rbrace $. If $ N^{+}(b)$$ =$$ \lbrace c,d,e,f \rbrace $, then $v$ is complete from $ \lbrace e,f,c \rbrace $, and so we can assume without loss of generality that $(c,e) \in E(H)$. Then $ \lbrace e,d \rbrace $ is a non-trivial homogeneous set and $H$ is $K_{6}-$free. Otherwise, we can assume without loss of generality that $ N^{+}(b) = \lbrace c,d,v,f \rbrace $. Then $c$ has one in-neighbor in $ \lbrace e,f,v \rbrace $. If $(f,c) \in E(H)$,  then $e$ has one in-neighbor in $ \lbrace f,v \rbrace $, and so either $(f,e) $$\in$$ E(H)$ which implies that  $H$$ \approx $$R_{5} $, or $(v,e)$$ \in $$E(H)$ and so $H$$ \approx$$ S_{11} $. And if $(e,c)$$ \in$$ E(H)$ or $(v,c)$$ \in $$E(H)$, then $ \lbrace f,d \rbrace $ is a non-trivial homogeneous set and $H$ is $K_{6}-$free.  So $ 4,5 $, or $6$ holds.\\
$\bullet$ If $ N^{+}(d) = \lbrace a,b,e \rbrace $ (without loss of generality), then $ \lbrace a,b \rbrace $ is complete to $ \lbrace e,f,v \rbrace $ and $c$ has one in-neighbor in $ \lbrace e,f,v \rbrace $. So we have two cases: either $(e,c) \in E(H)$ and so we can assume without loss of generality that $(e,f)\in E(H)$ and $H \approx S_{12} $, or $(f,c) \in E(H)$ (without loss of generality) and so $ H \approx H_{30} $. Hence $3$ or $6$ holds.\\
$\bullet$ $ N^{+}(d) = \lbrace a,e,f \rbrace $ (without loss of generality). Then $ \lbrace a \rbrace $ is complete to $ \lbrace e,f,v \rbrace $ and $c$ has one in-neighbor in $ \lbrace e,f,v \rbrace $. If $(v,c) \in E(H) $, then we can assume without loss of generality that $(b,e) \in E(H)$.  So $ \lbrace c,d \rbrace $ is a non-trivial homogeneous set and $H$ is $K_{6}-$free, so $ 5 $ holds. Otherwise $(e,c) \in E(H)$ (without loss of generality). Then $b$ has one in-neighbor in $ \lbrace e,f,v \rbrace $.\\
$\ast$ $(e,b) \in E(H)$, then $H \approx H_{31} $ and $3$ holds. \\
$\ast$ $(f,b) \in E(H)$, then $e$ has one out-neighbor in $ \lbrace f,v \rbrace $ and so we have two cases: either $(e,f) \in E(H)$ and so $H \approx S_{13} $, or $(e,v) \in E(H)$ and so $H \approx S_{14} $. Hence $6$ holds.\\
$\ast$ $(v,b) \in E(H)$, then $H \approx R^{c}_{4} $ and $ 4 $ holds.
\vspace{1.5mm}\\
\textbf{Case 3: $ n_{3,3} = 3, n_{4,2} = n_{2,4} = 2 $}. Let $a,b,c$ be the vertices of $H$, such that $ d^{+}(a) = d^{+}(b) = d^{+}(c) = 3 $, let $ e,d$ be the vertices of $H$, such that $ d^{+}(e) = d^{+}(d) = 4 $, and let $f,v$ be the vertices of $H$, such that $ d^{+}(f) = d^{+}(v) = 2 $.\\
\textbf{3.1} $(a,b,c)$ is a transitive ordering. We can assume without loss of generality that $(d,e)$ and $(f,v)$ are arcs of $H$. Then $a$ has one out-neighbor in $ \lbrace d,e,f,v \rbrace $.\\
\textbf{3.1.1} $(a,d) \in E(H)$, then $c$ has one in-neighbor in $ \lbrace d,e,v \rbrace $.  If $(d,c)$$\in$$E(H)$, then $e$ is complete to $ \lbrace b,v \rbrace $ and so $v$ have one out-neighbor in $ \lbrace d,b \rbrace $, which implies that either $(v,b)$$\in$$E(H)$ and so $ \lbrace c,d \rbrace $ is a non-trivial homogeneous set of a $K_{6}-$free tournament $H$, or $(v,d)$$\in$$E(H)$ and so $H $$\approx$$ H_{32} $. Otherwise if $(e,c)$$\in$$E(H)$, then $d$ is complete to $ \lbrace b,v \rbrace $ and so we have two cases: either $(v,e)$$\in$$E(H)$ and so $H \approx H_{33} $, or $(v,b)$$\in$$E(H)$ and so $H \approx R_{6} $. Otherwise $(v,c)$$\in$$E(H)$. Then $H \approx H_{34} $. So $3,4$, or $5$ holds.\\
\textbf{3.1.2}  $(a,f)$$\in$$E(H)$. Then $c$ has one in-neighbor in $ \lbrace d,e,f,v \rbrace $.\\
$\bullet$ If $(d,c)$$\in$$E(H)$, then $v$ has one out-neighbor in $ \lbrace d,b \rbrace $, and so we have two cases: If $(v,b)$$\in$$E(H)$, then $H \approx H_{35} $ and $ 3 $ holds. Otherwise $(v,d)$$\in$$E(H)$. Then either $(b,d)$$\in$$E(H)$ and so $H \approx H_{36} $ and $ 3 $ holds, or $(b,f)$$\in$$E(H)$ and so $H \approx H^{c}_{37} $ and $ 3 $ holds. \\
$\bullet$ If $(f,c)$$\in$$E(H)$, then either $(v,b)$$\in$$E(H)$ and so $H \approx H_{38} $ and $ 3 $ holds, or $(v,d)$$\in$$E(H)$ and so $ \lbrace b,f \rbrace $ is a non-trivial homogeneous set, $H$ is $K_{6}-$free and $ 5 $ holds. \\
$\bullet$ If $(v,c)$$\in$$E(H)$, then either $(b,d)$$\in$$E(H)$ and so $H \approx R^{c}_{6} $ and $ 4 $ holds, or $(b,f)$$\in$$E(H)$ and so $H \approx H^{c}_{33}$ and $ 3 $ holds. \\
$\bullet$ If $(e,c)$$\in$$E(H)$, then $v$ have one out-neighbor in $ \lbrace b,d,e \rbrace $ and so we have three cases: If $(v,b)$$\in$$E(H)$, then $(f,d)$$\in$$E(H)$ and so $ \lbrace c,f \rbrace $ is a non-trivial homogeneous set and $H$ is $K_{6}-$free, or $(f,e)$$\in$$E(H)$ and so $H \approx H_{40} $, or $(f,b)$$\in$$E(H)$ and so $H \approx H_{41} $. Otherwise if $(v,d)$$\in$$E(H)$, then either $(f,e)$$\in$$E(H)$ and so $H \approx H_{42}$, or $(f,b)$$\in$$E(H)$ and so $H \approx H_{43} $. Otherwise $(v,e)$$\in$$E(H)$. Then $ \lbrace a,e \rbrace $ is a non-trivial homogeneous set and $H$ is $K_{6}-$free whatever $d$ or $b$ is an out-neighbor of $f$. So $ 3$ or $5 $ holds.\\
\textbf{3.1.3} $(a,e)\in E(H)$. Then either $(v,d)$$\in$$E(H)$ and so $ \lbrace e,b \rbrace $ is a non-trivial homogeneous set and $H$ is $K_{6}-$free, or $(v,b)$$\in$$E(H)$ and so $H \approx H^{c}_{38} $. Hence $ 3 $ or $5$ holds.\\
\textbf{3.1.4} $(a,v)$$\in$$E(H)$. Then $b$ has one out-neighbor in $ \lbrace d,e,v \rbrace $, and so we have three cases: If $(b,d)$$\in$$E(H)$, then $(d,c)$$\in$$E(H)$ and so $H \approx H_{44} $, or $(e,c)$$\in$$E(H)$ and so $H \approx H^{c}_{35} $, or $(v,c)$$\in$$E(H)$ and so $ \lbrace a,v \rbrace $ is a non-trivial homogeneous set and $H$ is $K_{6}-$free. Otherwise if $(b,e)$$\in$$E(H)$, then $H \approx H^{c}_{36} $. Otherwise $(b,v)$$\in$$E(H)$. Then either $(v,c)$$\in$$E(H)$ and so $H \approx H^{c}_{32} $, or $(v,e)$$\in$$E(H)$ is an arc and so $H \approx H_{37} $. Hence $ 3 $ or $5$  holds.\\
\textbf{3.2} $H$$\mid$$\lbrace a,b,c \rbrace$ is a cyclic triangle. Assume without loss of generality that $(a,b),(b,c),(c,a)$ are arcs. Then $a$ has two out-neighbors in $ \lbrace d,e,f,v \rbrace $.\\
\textbf{3.2.1} $ N^{+}(a) = \lbrace b,d,e \rbrace $. Then $b$ has two out-neighbors in $ \lbrace d,e,f,v \rbrace $, and so we have three cases: If $ N^{+}(b) = \lbrace c,f,v \rbrace $, then $(v,c)$$\in$$E(H)$ and so without loss of generality $(c,e) \in E(H)$ and $ \lbrace d,b \rbrace $ is a non-trivial homogeneous set, or $(v,f)$$\in$$E(H)$ and so $c$ or $d$ is an out-neighbor of $f$ (without loss of generality), which implies that $ \lbrace e,b \rbrace $ is a non-trivial homogeneous set, or $(v,d)$$\in$$E(H)$ (without loss of generality) and so $ \lbrace e,b \rbrace $ is a non-trivial homogeneous set. Note that $H$ is $K_{6}-$free in these cases and so $ 5 $ holds. Now if $ N^{+}(b) = \lbrace c,d,f \rbrace $, then $ \lbrace c,f \rbrace $ is a non-trivial homogeneous set and $H$ is $K_{6}-$free, so $ 5 $ holds. Finally if $ N^{+}(b) = \lbrace c,d,e \rbrace $ then $e$ or $d$ has three in-neighbors, a contradiction.\\
\textbf{3.2.2} $ N^{+}(a) = \lbrace b,f,v \rbrace $. Then $b$ still needs two out-neighbors.\\
\textbf{3.2.2.1} $ N^{+}(b) = \lbrace c,d,e \rbrace $. If $f$ is an in-neighbor of $v$, then $ \lbrace a,f \rbrace $ is a non-trivial homogeneous set and $H$ is $K_{6}-$free. Otherwise $f$ is an out-neighbor of $v$, then $ \lbrace a,v \rbrace $ is a non-trivial homogeneous set and $H$ is $K_{6}-$free whatever the out-neighbor of $c$ in $ \lbrace d,e,f \rbrace $ is. So $ 5 $ holds.\\
\textbf{3.2.2.2} $ N^{+}(b) = \lbrace c,f,v \rbrace $. Then $v$ has two out-neighbors in $ \lbrace c,d,e,f \rbrace $.\\
$\bullet$ If $ N^{+}(v) = \lbrace d,e \rbrace $, then either $(f,c)$$\in$$E(H)$ and so we can assume without loss of generality that $(c,d)$$\in$$E(H)$ which implies that $ \lbrace b,f \rbrace $ is a non-trivial homogeneous set, or $(f,e)$$\in$$E(H)$ (without loss of generality) and so $ \lbrace a,b,c \rbrace $ is a non-trivial homogeneous set. Note that in both cases $H$ is $K_{6}-$free. So $ 5 $ holds.\\
$\bullet$ If $ N^{+}(v) = \lbrace d,f \rbrace $ (without loss of generality), then $f$ has one in-neighbor in $ \lbrace c,d,e \rbrace $. If $(e,f)$$\in$$E(H)$, then $ \lbrace a,e \rbrace $ is a non-trivial homogeneous set and $H$ is $K_{6}-$free. Now if $(d,f)$$\in$$E(H)$, then either $(c,e)$$\in$$E(H)$ and so $H $$\approx$$ H_{45}$, or $(c,d)$$\in$$E(H)$ and so $H \approx H_{46} $. Finally if $(c,f)$$\in$$E(H)$, then $ \lbrace a,b,c \rbrace $ is a non-trivial homogeneous set and $H$ is $K_{6}-$free. So $3$ or $ 5 $ holds.\\
$\bullet$ If $ N^{+}(v) = \lbrace f,c \rbrace $, then $ \lbrace b,v \rbrace $ is a non-trivial homogeneous set and $H$ is $K_{6}-$free whatever the orientation of the rest of the arcs is. So $ 5 $ holds.\\
$\bullet$ If $ N^{+}(v) = \lbrace d,c \rbrace $ (without loss of generality), then $c$ has one in-neighbor in $ \lbrace f,d,e \rbrace $:\\ If $(f,c)$$\in$$E(H)$, then $ \lbrace a,e \rbrace $ is a non-trivial homogeneous set and $H$ is $K_{6}-$free. Otherwise if $(e,c)$$\in$$E(H)$,  then $H$$ \approx$$ H_{45} $. Otherwise $(d,c)$$\in$$E(H)$. Then either $(d,f)$$\in$$E(H)$ and so $H$$ \approx$$ H_{46} $, or $(d,e)$$\in$$E(H)$ and so $ \lbrace a,e \rbrace $ is a non-trivial homogeneous set and $H$ is $K_{6}-$free. So $3$ or $ 5 $ holds.\\
\textbf{3.2.2.3} $ N^{+}(b) = \lbrace c,d,f \rbrace $ (without loss of generality). Then $v$ has one out-neighbor in $ \lbrace c,d,e,f \rbrace $.\\
$\bullet$ If $(v,d)$$\in$$E(H)$, then either $(e,f)$$\in$$E(H)$ and so $ \lbrace a,e \rbrace $ is a non-trivial homogeneous set and $H$ is $K_{6}-$free, or $(c,f)$$\in$$E(H)$ and so $H \approx H_{45} $. Hence $ 3 $ or $5$ holds. \\
$\bullet$ If $(v,c)$$\in$$E(H)$, then $f$ has one out-neighbor in $ \lbrace c,d,e \rbrace $. If $(f,e)$$\in$$E(H)$, then either $(d,e)$$\in$$E(H)$ and so $H $$\approx$$ H_{47} $, or $(c,e)$$\in$$E(H)$ and so $H$$ \approx$$ H^{c}_{48} $. Now if $(f,d)$$\in$$E(H)$, then $H$$ \approx$$ R_{7} $. Finally if $(f,c)$$\in$$E(H)$, then $ \lbrace a,e \rbrace $ is a non-trivial homogeneous set and $H$ is $K_{6}-$free. So $3,4$ or $ 5 $ holds.\\ 
$\bullet$ If $(v,f)$$\in$$E(H)$, then either $(c,d)$$\in$$E(H)$ and so $H$$ \approx $$R^{c}_{7}$ and $4$ holds, or $(d,c)$$\in$$E(H)$ and so $\lbrace a,v \rbrace $ is a non-trivial homogeneous set of a $K_{6}-$free tournament $H$ whatever the orientation of the rest of the arcs is. So $5$ holds. \\
$\bullet$ If $(v,e)$$\in$$E(H)$, then $(f,d)$$\in$$E(H)$ and so $H$$ \approx $$H_{46} $, or $(f,e)$$\in$$E(H)$ and so $ \lbrace c,d \rbrace $ is a non-trivial homogeneous set and $H$ is $K_{6}-$free, or $(f,c)$$\in$$E(H)$ and here we have two cases: either $(c,d)$$\in$$E(H)$ and so $H$$ \approx$$ H^{c}_{48}$, or $(c,e)$$\in$$E(H)$ and so $H $$\approx$$ H_{49}$. Hence  $ 3 $ or $5$ holds.\\
\textbf{3.2.3} $ N^{+}(a) = \lbrace b,d,f \rbrace $ (without loss of generality). Then we have six cases:\\
$\bullet$ If $ N^{+}(b) = \lbrace c,d,e \rbrace $, then $ \lbrace c,e \rbrace $ is a non-trivial homogeneous set and $H$ is $K_{6}-$free. So $ 5 $ holds.\\
$\bullet$ If $ N^{+}(b) = \lbrace c,d,f \rbrace $, then either $(c,e)$$\in$$E(H)$ and so $H $$\approx$$ H_{50} $, or $(c,f)$$\in$$E(H)$ and so $H $$\approx$$ H_{47} $. Hence $ 3 $ holds.\\
$\bullet$ If $ N^{+}(b) = \lbrace c,d,v \rbrace $, then we have three cases: If $(v,c)$$\in$$E(H)$, then $H $$\approx $$H_{51}$. Now if $(v,e)$$\in$$E(H)$, then $H $$\approx$$ H_{48} $. Finally if $(v,f)$$\in$$E(H)$, then either $(c,e)$$\in$$E(H)$ and so $H $$\approx$$ H_{52} $, or $(c,f)$$\in$$E(H)$ and so $H $$\approx $$R^{c}_{7} $. Hence $3$ or $ 4 $ holds.\\
$\bullet$ $ N^{+}(b) = \lbrace c,e,f \rbrace $. Then $c$ has one out-neighbor in $ \lbrace d,e,f \rbrace $ and so we have three cases: If $(c,d)$$\in$$E(H)$, then $H$$ \approx $$H_{52} $. Otherwise if $(c,e)$$\in$$E(H)$, then $H \approx H_{51} $. Otherwise $(c,f)$$\in$$E(H)$, then either $(f,d)$$\in$$E(H)$ and so $H \approx R_{7} $, or $(f,e)$$\in$$E(H)$ and so $H \approx H^{c}_{48}$. Hence $ 3 $ or $4$ holds.\\
$\bullet$ If $ N^{+}(b) = \lbrace c,e,v \rbrace $, then $v$ has one out-neighbor in $ \lbrace c,d,e,f \rbrace $ and so we have four cases: If $(v,c)$$\in$$E(H)$, then either $(c,d)$$\in$$E(H)$ and so $H $$\approx$$ H_{50} $, or $(c,e)$$\in$$E(H)$ and so $H$$ \approx $$H_{52} $. Otherwise if $(v,d)$$\in$$E(H)$, then $ \lbrace c,e \rbrace $ is a non-trivial homogeneous set and $H$ is $K_{6}-$free. Otherwise if $(v,e)$$\in$$E(H)$, then $H $$\approx$$ H_{49} $. Otherwise $(v,f)$$\in$$E(H)$. Then $c$ has one out-neighbor in $ \lbrace f,d,e \rbrace $. In this case $(c,d)$$\in$$E(H)$ and so $H $$\approx$$ H_{51} $, or $(c,e)$$\in$$E(H)$ and so $H $$\approx$$ H_{50} $, or $(c,f)$$\in$$E(H)$ and so $ \lbrace c,v \rbrace $ is a non-trivial homogeneous set and $H$ is $K_{6}-$free whatever $e$ or $d$ is an out-neighbor of $f$. So $3$ or $ 5 $ holds.\\
$\bullet$ $ N^{+}(b) = \lbrace c,f,v \rbrace $. Then $v$ has one out-neighbor in $ \lbrace c,d,e,f \rbrace $:\\
$\ast$ If $(v,f)$$\in$$E(H)$, then $c$ has one out-neighbor in $ \lbrace d,e,f \rbrace $. If $(c,d)$$\in$$E(H)$, then $H \approx H_{48} $. Now if $(c,f)$$\in$$E(H)$, then $ \lbrace c,v \rbrace $ is a non-trivial homogeneous set and $H$ is $K_{6}-$free. Finally if $(c,e)$$\in$$E(H)$, then either $(f,e)$$\in$$E(H)$ and so $ \lbrace b,d \rbrace $ is a non-trivial homogeneous set and $H$ is $K_{6}-$free, or $(f,d)$$\in$$E(H)$ and so $H \approx H_{49} $. So $ 3 $ or $5$ holds.\\
$\ast$ If $(v,e)$$\in$$E(H)$, then $f$ has one out-neighbor in $ \lbrace c,d,e \rbrace $. If $(f,d)$$\in$$E(H)$, then $H \approx H_{46} $. Now if $(f,e)$$\in$$E(H)$, then $ \lbrace b,d \rbrace $ is a non-trivial homogeneous set and $H$ is $K_{6}-$free. Finally if $(f,c)$$\in$$E(H)$, then either $(c,e)$$\in$$E(H)$ and so $ \lbrace b,f \rbrace $ is a non-trivial homogeneous set and $H$ is $K_{6}-$free, or $(e,c)$$\in$$E(H)$ and so $H \approx R^{c}_{7} $. Hence $3, 4 $, or $5$ holds.\\
$\ast$ If $(v,d)$$\in$$E(H)$, then either $(f,c)$$\in$$E(H)$ and so $ \lbrace b,f \rbrace $ is a non trivial homogeneous set and $H$ is $K_{6}-$free, or $(e,c)$$\in$$E(H)$ and so $H \approx H_{45} $. Hence $ 3 $ or $5$ holds.\\
$\ast$ If $(v,c)$$\in$$E(H)$, then $(f,c)$$\in$$E(H)$ and so $ \lbrace b,f \rbrace $ is a non-trivial homogeneous set and $H$ is $K_{6}-$free, or $(f,d)$$\in$$E(H)$ and so $H \approx H_{48}^{c}$, or $(f,e)$$\in$$E(H)$ and here we have two cases: either $(c,e)$$\in$$E(H)$ and so $H \approx R^{c}_{7} $, or $(c,d)$$\in$$E(H)$ and so $H \approx H_{47} $. Hence $ 3,4 $, or $5$ holds.
\vspace{1.5mm}\\
\textbf{Case 4: $ n_{3,3} = 5, n_{4,2} = n_{2,4} = 1 $}. Let $c,d,e,f,v$ be the vertices of $H$, such that $ d^{+}(c) = d^{+}(d) = d^{+}(e) = d^{+}(f) = d^{+}(v) = 3 $, let $a$ be the vertex of $H$, such that $ d^{+}(a) = 4 $, and let $b$ be the vertex of $H$, such that $ d^{+}(b) = 2 $. Without loss of generality we have two cases: either $ N^{+}(a) = \lbrace c,d,e,f \rbrace $, or $ N^{+}(a) = \lbrace c,d,e,b \rbrace $.\\
\textbf{4.1} If $ N^{+}(a) = \lbrace c,d,e,f \rbrace $, then $b$ has one out-neighbor in $\lbrace c,d,e,f,v \rbrace $.\\
$\bullet$ If $ N^{+}(b) = \lbrace v,a \rbrace $, then we can assume without loss of generality that $ N^{+}(v) = \lbrace e,f,a \rbrace $ and so $c$ has one out-neighbor in $ \lbrace d,e,f \rbrace $. Which implies that either $(c,e)$$\in$$E(H)$ (without loss of generality) and so $H $$\approx$$ H_{54}$, or $(c,d)$$\in$$E(H)$ and so by assuming without loss of generality that $(d,e)$$\in$$E(H)$, $H \approx H_{54}$. So $ 3 $ holds.\\
$\bullet$ If $ N^{+}(b) = \lbrace c,a \rbrace $ (without loss of generality), then if $(v,c)$$\in$$E(H)$, we can assume without loss of generality that $(f,e)$$\in$$E(H)$ and so $ \lbrace d,e,f \rbrace $ is a non-trivial homogeneous set of a $K_{6}-$free tournament $H$,  and if without loss of generality $(d,c)$$\in$$E(H)$, then either $(d,e)$$\in$$E(H)$ (without loss of generality) and so $H \approx H_{53} $, or $(d,v)$$\in$$E(H)$ and so by assuming without loss of generality that $(v,f)$$\in$$E(H)$, $H $$\approx$$ R_{8} $. Hence  $3, 4 $, or $5$ holds.\\
\textbf{4.2} If $ N^{+}(a) = \lbrace c,d,e,b \rbrace $, then we can assume without loss of generality that the set of out-neighbors of $b$ is $\lbrace f,v \rbrace $, or $\lbrace c,d \rbrace $, or $\lbrace f,e \rbrace $.\\
$\bullet$ $ N^{+}(b)$$ =$$ \lbrace f,v \rbrace $. If $ N^{+}(v) $$=$$ \lbrace f,a,e \rbrace $ (without loss of generality), then either $(e,f)$$\in$$E(H)$ and so we can assume without loss of generality that $(e,d)$$\in$$E(H)$, which implies that $H $$\approx$$ H_{55}$, or $(d,f)$$\in$$E(H)$ (without loss of generality) and so $ \lbrace b,d \rbrace $ is a non-trivial homogeneous set and $H$ is $K_{6}-$free. Now if $ N^{+}(v) $$=$$ \lbrace d,a,e \rbrace $ (without loss of generality), then either $(f,e)$$\in$$E(H)$ (without loss of generality) and so $ \lbrace b,c \rbrace $ is a non-trivial homogeneous set of a $K_{6}-$free tournament $H$, or $(f,c)$$\in$$E(H)$ and so by assuming without loss of generality that $(c,d)$$\in$$E(H)$, $H$$ \approx$$ H_{55} $.  Hence $ 3 $ or $5$ holds.\\
$\bullet$ $ N^{+}(b) = \lbrace c,d \rbrace $. Then we have two cases: If $ N^{+}(e) = \lbrace f,v,b \rbrace $, then either $(v,f)$$\in$$E(H)$ and so we can assume without loss of generality that $(f,d)$$\in$$E(H)$, or $(v,c)$$\in$$E(H)$ (without loss of generality). In both cases $H \approx H_{56} $ and $ 3 $ holds. And if $ N^{+}(e) = \lbrace f,d,b \rbrace $, then $H \approx H_{57} $ and $ 3 $ holds.\\
$\bullet$ $ N^{+}(b) = \lbrace e,f \rbrace $. Then $e$ has one in-neighbor in $\lbrace c,d,f,v \rbrace$.\\
$\ast$ If $(v,e)$$\in$$E(H)$, then we can assume without loss of generality that $(f,d)$$\in$$E(H)$, and so $H \approx H_{59} $ and 3 holds.\\
$\ast$ If $(f,e)$$\in$$E(H)$, then either $(f,v)$$\in$$E(H)$ and so $H \approx R_{9}^{c}$, or $(f,d)$$\in$$E(H)$ (without loss of generality) and so $H \approx H_{58} $. Hence $ 3 $ or $4$ holds.\\
$\ast$ If  $(d,e)$$\in$$E(H)$ (without loss of generality), then we have three cases: If $(d,c)$$\in$$E(H)$, then $ \lbrace a,d \rbrace $ is a non-trivial homogeneous set and $H$ is $K_{6}-$free. Otherwise if $(f,c)$$\in$$E(H)$, then either $(d,f)$$\in$$E(H)$ and so $ \lbrace b,d \rbrace $ is a non-trivial homogeneous set and $H$ is $K_{6}-$free, or $(d,v)$$\in$$E(H)$ and so $H \approx R_{9}$. Finally if $(v,c)$$\in$$E(H)$, then $H \approx R_{10}$. So  $4$ or $5$ holds.
\vspace{1.5mm}\\
\textbf{Case 5: $ n_{3,3} = 7, n_{4,2} = n_{2,4} = 0 $} \textbf{(Regular case)}\\
In this case its easy to check that if $H$ is a regular $7-$vertex tournament, then $H$$\approx$$ H_{39}$ or $H$$\approx$$R_{11}$ or $H$$\approx$$S_{15}$. Then $3$ or $4$ or $6$ holds. $\hfill { \square }$

\end{document}